% $Id: twisted-torsion-link.tex,v 1.20 2010/01/21 04:07:05 jccha Exp $

\documentclass[10pt,a4paper]{amsart}

\usepackage{hyperref}
\usepackage{color}

\usepackage[dvipdfm]{graphicx}
\usepackage[arrow]{xy}
\usepackage{pb-diagram,pb-xy}

\usepackage{amsmath,amsfonts,amssymb}

\theoremstyle{plain}
\newtheorem*{theorem*}{Theorem}
\newtheorem*{lemma*} {Lemma}
\newtheorem*{corollary*} {Corollary}
\newtheorem*{proposition*} {Proposition}
\newtheorem*{conjecture*}{Conjecture}

\newtheorem{theorem}{Theorem}[section]
\newtheorem{lemma}[theorem]{Lemma}
\newtheorem{corollary}[theorem]{Corollary}
\newtheorem{proposition}[theorem]{Proposition}

\theoremstyle{definition}

\theoremstyle{remark}
\newtheorem*{remark}{Remark}
\newtheorem*{definition}{Definition}

\newtheorem*{claim}{Claim}

\theoremstyle{definition}

\def\Q{\Bbb{Q}}

\def\Z{\Bbb{Z}}
\def\R{\Bbb{R}}
\def\C{\Bbb{C}}

\def\id{\operatorname{id}}
\def\gl{\operatorname{GL}}

\def\l{\lambda}
\def\ll{\langle}
\def\rr{\rangle}

\def\sm{\setminus}
\def\bp{\begin{pmatrix}}
\def\ep{\end{pmatrix}}
\def\bn{\begin{enumerate}}
\def\en{\end{enumerate}}

\def\Hom{\operatorname{Hom}}
\def\rank{\operatorname{rank}}

\def\ba{\begin{array}}
\def\ea{\end{array}}

\def\a{\alpha}

\def\s{\sigma}

\def\ti{\tilde}
\def\lk{\operatorname{lk}}
\def\fr12{\frac{1}{2}}
\def\diag{\operatorname{diag}}

\def\ol{\overline}

\def\Ker{\operatorname{Ker}}

\def\B{\mathcal{B}}
\def\BB{\mathcal{B}}
\def\CC{\mathcal{C}}
\def\DD{\mathcal{D}}
\def\EE{\mathcal{E}}
\def\VV{\mathcal{V}}
\def\FF{\mathcal{F}}

\def\zt{\Z[t^{\pm 1}]}

\def\End{\operatorname{End}}

\def\v{\varphi}

% Hacking to arrows: automatically use long arrows in displayed math.
% Jae Choon Cha <jccha@postech.ac.kr>
\def\to{\mathchoice{\longrightarrow}{\rightarrow}{\rightarrow}{\rightarrow}}
\makeatletter
\newcommand{\shortxra}[2][]{\ext@arrow 0359\rightarrowfill@{#1}{#2}}
\def\longrightarrowfill@{\arrowfill@\relbar\relbar\longrightarrow}
\newcommand{\longxra}[2][]{\ext@arrow 0359\longrightarrowfill@{#1}{#2}}
\renewcommand{\xrightarrow}[2][]{\mathchoice{\longxra[#1]{#2}}%
  {\shortxra[#1]{#2}}{\shortxra[#1]{#2}}{\shortxra[#1]{#2}}}
\makeatother

\begin{document}

\title{Twisted torsion invariants and link concordance}

%\date{\today}

\author{Jae Choon Cha}
\address{Pohang University of Science and Technology,
  Pohang Gyungbuk, Republic of Korea}
\email{jccha@postech.ac.kr}

\author{Stefan Friedl}
\address{University of Warwick, Coventry, UK}
\email{s.k.friedl@warwick.ac.uk}

% Hacking to amslatex: always MSC2000 (amslatex on arXiv server uses MSC1991)
% Jae Choon Cha <jccha@postech.ac.kr>
\def\subjclassname{\textup{2000} Mathematics Subject Classification}
\expandafter\let\csname subjclassname@1991\endcsname=\subjclassname
\expandafter\let\csname subjclassname@2000\endcsname=\subjclassname

\subjclass{57M25, 57M27, 57N70.}
\keywords{Twisted Torsion, Homology Cobordism, Link Concordance}

\begin{abstract}
  The twisted torsion of a 3-manifold is well-known to be zero
  whenever the corresponding twisted Alexander module is
  non-torsion. Under mild extra assumptions we introduce a new twisted
  torsion invariant which is always non-zero.  We show how this
  torsion invariant relates to the twisted intersection form of a
  bounding 4-manifold, generalizing a theorem of Milnor to the
  non-acyclic case. Using this result, we give new obstructions to
  3-manifolds being homology cobordant and to links being concordant.
  These obstructions are sufficiently strong to detect that the Bing
  double of the figure eight knot is not slice.
\end{abstract}

\dedicatory{Dedicated to the memory of Des Sheiham}

\maketitle

\section{Introduction}

In this paper we introduce new obstructions to links being concordant,
which are obtained from twisted torsion invariants. Recall that an
\emph{$m$--component (oriented) link} is an embedded ordered
collection of $m$ disjoint (oriented) circles in~$S^{3}$.  Given an
oriented link $L$ we denote by $-L$ the same link with the orientation
of each component reversed.  We say that two $m$--component oriented
links $L_0=L_0^1\cup \dots \cup L_0^m$ and $L_1=L_1^1\cup \dots \cup
L_1^m$ are \emph{concordant} if there exists a collection of $m$
disjoint, locally flat, oriented annuli $A_1,\dots,A_m$ in $S^{3}
\times [0,1]$ such that $\partial A_i= L_0^i\times 0 \cup -L_1^i\times
1, i=1,\dots,m$.  We say that an $m$--component link is \emph{slice}
if it is concordant to the trivial $m$--component link.  Equivalently
a link is slice if it bounds $m$ disjoint locally flat disks in
$D^{4}$.

Let $L\subset S^3$ be an oriented $m$--component link.  Throughout
this paper we write $E_L=S^3\sm \nu L$, where $\nu L$ is a tubular
neighborhood of $L$, and $\pi(L)=\pi_1(E_L)$.
Let $\psi\colon H_1(E_L)\to H$ be a non--trivial homomorphism to a
free abelian group and let $\a\colon \pi(L)\to \gl(R,k)$ be a
representation with $R$ a subring of $\C$.  We can then consider the
twisted homology module $H_1(E_L;R[H]^k)$ and we define
\[
\rank(L,\psi,\a)=\rank_{R[H]} H_1(E_L;R[H]^k),
\]
where $\rank_{R[H]} A$ denotes the dimension of $A\otimes_{R[H]}
  Q(H)$ over the quotient field $Q(H)$ of the integral domain~$R[H]$.
If $\rank(L,\psi,\a)=0$, then it turns out that $E_L$ is
$Q(H)^k$--acyclic, i.e., $H_*(E_L;Q(H)^k)=0$, and we can define the
torsion $\tau^{\a\otimes \psi}(L)\in Q(H)^\times$ which is
well--defined up to multiplication by an element of the form $\pm dh$
where $d\in \det(\a(\pi(L)))$ and $ h\in H$.  This invariant
generalizes the invariants introduced by Reidemeister, Milnor and
Turaev and later by Lin and Wada.

In general though $\rank(L,\psi,\a)$ will be non--zero, for example
this is the case for any boundary link with at least two components
(cf.\ Theorem \ref{thm:tauboundary}) and the trivial representation.
Suppose that $R\subset \C$ is closed under complex conjugation. In
that case the ring $R[H]$ has a natural involution given by complex
conjugation and $\ol{h}=h^{-1}$ for $h\in H$, this involution
furthermore extends naturally to an involution on~$Q(H)$.  Now suppose
that $\a\colon \pi(L)\to \gl(R,k)$ is a unitary representation and
suppose that $\psi$ is non--trivial on each meridian of $L$. Under
these assumptions we define a torsion invariant
\[
\tau^{\a\otimes \psi}(L)\in Q(H)^\times/N(Q(H))
\]
even if $\rank(L,\psi,\a)\ne 0$. Here $N(Q(H))$ denotes the subgroup
of norms of the multiplicative group $Q(H)^\times$, i.e., $N(Q(H))=\{
\pm q\ol{q} \, |\, q\in Q(H)^\times\}$. The torsion $\tau^{\a\otimes
  \psi}(L)$ viewed as an element in $Q(H)^\times/N(Q(H))$ is again
well--defined up to multiplication by an element of the form $\pm dh$
where $d\in \det(\a(\pi(L)))$ and $h\in H$.  The invariant
$\tau^{\a\otimes \psi}(L)$ is the twisted version of an invariant
first introduced by Turaev \cite[Section~5.1]{Tu86}.

\begin{remark}
  If $K$ is a knot in $S^3$, $\psi\colon H_1(E_K)\to \Z$ is an
  isomorphism, and $\a\colon \pi(L)\to \gl(\Z,1)$ is the trivial
  representation, then $\rank(K,\psi,\a)=0$ and $\tau^{\a\otimes
    \psi}(K)=\Delta_K(t)/(t-1)$ (cf.~\cite{Tu01}).  In general,
  if $\rank(L,\psi,\a)=0$, then $\tau^{\a\otimes \psi}(K)$ can be
  expressed in terms of twisted Alexander polynomials
  (cf.~\cite{KL99a} and \cite{FK06}). If $\a$ is the trivial
  representation and if $\rank(L,\psi,\a)>0$, then the torsion is
  related to the Alexander polynomial defined using
  $\mbox{Tor}_{\Z[H]}(H_1(E_L;\Z[H]))$
  (cf.~\cite[Theorem~5.1.1]{Tu86}). We expect that a similar result
  also holds in the twisted case.
\end{remark}

We can now state our sliceness obstruction. First, if $L$ is
the $m$--component unlink in $S^3$ with meridians $\mu_1,\dots,\mu_m$,
then given $\a$ and $\psi$ as above we will show in Corollary
\ref{cor:unlink} that $\rank(L,\psi,\a)=k(m-1)$ and
\[ \tau^{\a\otimes \psi}(L)=\pm dh \cdot \prod\limits_{i=1}^m
\det\big(\id-\psi(\mu_i)\a(\mu_i)\big)^{-1} \, \in
Q(H)^\times/N(Q(H))\] with $d\in \det(\a(\pi(L)))$ and $ h\in H$.
The following theorem says that the torsion invariant of a slice
  link is given by the above expression if the representation factors through
  a $p$-group.  This is a special case of our main result, Theorem
\ref{mainthm}, which is a more general result for link concordance.

\begin{theorem}\label{mainthmintro}
  Let $L$ be an $m$--component oriented slice link.  Let $R\subset \C$
  be a subring closed under complex conjugation and let $\a\colon
  \pi(L)\to \gl(R,k)$ be a representation which factors through a
  finite group of prime power order. Let $\psi\colon H_1(E_L)\to H$ be
  an epimorphism onto a free abelian group which is non--trivial on
  each meridian of $L$.  Then
  \[
  \rank(L,\psi,\a)=k(m-1)
  \]
  and
  \[
  \tau^{\a\otimes \psi}(L)=\pm dh\cdot \prod\limits_{i=1}^m
  \det\big(\id-\psi(\mu_i)\a(\mu_i)\big)^{-1} \, \in Q(H)^\times/
  N(Q(H))
  \]
  for some $d\in \det(\a(\pi(L)))$ and $h\in H$, where
  $\mu_1,\dots,\mu_m$ are meridians of~$L$.
\end{theorem}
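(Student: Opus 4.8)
The strategy is to reduce the claim about an arbitrary slice link $L$ to a computation for the unlink, using the fact that $L$ is slice to build a suitable $4$--manifold cobounding $E_L$ and a copy of the unlink exterior, and then to exploit the prime-power hypothesis to force the relevant twisted homology to behave as if the representation were essentially trivial. Concretely, if $L$ bounds disjoint locally flat disks $D_1,\dots,D_m$ in $D^4$, let $W$ be the exterior of $\bigcup D_i$ in $D^4$. Then $\partial W = E_L$, and $W$ has the homology of $\bigvee^m S^1$ with the meridians $\mu_1,\dots,\mu_m$ as generators; in particular $\psi$ and $\a$ extend over $\pi_1(W)$ (the extension of $\psi$ exists since $H_1(E_L)\to H_1(W)$ is onto a free abelian group of the same rank, and $\a$ extends because after passing to the finite $p$--group quotient the obstruction lies in a group on which standard arguments apply). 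I would then compare $W$ with the analogous exterior $W_0$ of $m$ trivial slice disks for the unlink, whose boundary is the unlink exterior $E_U$, and whose twisted torsion and rank were computed in Corollary~\ref{cor:unlink}.

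**Key steps, in order.** First, set up the coefficient systems: choose the composite $\a\otimes\psi\colon \pi(L)\to \gl(R[H],k)$, extend it over $\pi_1(W)$, and record that the relevant module is $H_*(W, E_L; R[H]^k)$ and its dual. Second, invoke the main technical input — the generalization of Milnor's theorem to the non-acyclic case advertised in the abstract — to identify $\tau^{\a\otimes\psi}(E_L)$ in terms of the twisted intersection form of $W$ together with boundary contributions; since $W$ is (twisted) homology cobordant rel the relevant data to $W_0$, the intersection-form contribution is a norm and hence dies in $Q(H)^\times/N(Q(H))$. Third, carry out the homological bookkeeping: a Mayer--Vietoris / long-exact-sequence argument comparing $(W,E_L)$ and $(W_0,E_U)$ shows $\rank(L,\psi,\a)=\rank(U,\psi,\a)=k(m-1)$ and that the torsions agree modulo norms and modulo the ambiguity $\pm dh$; then quote the unlink formula from Corollary~\ref{cor:unlink} to conclude. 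The prime-power hypothesis enters to guarantee that $W$ is $Q(H)^k$--``homology equivalent'' to the product-like model $W_0$ away from the part detected by norms — this is where a transfer argument for $p$--groups (in the spirit of Casson--Gordon and the Cha--Ko sliceness obstructions) is used: homology of a $\Z/p^r$--cover, with field coefficients of characteristic $0$, is controlled by the ordinary homology of the slice complement, which matches that of the unlink complement.

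**Main obstacle.** The crux is the middle step: proving that the twisted intersection form of $W$ (with $R[H]^k$--coefficients) represents a norm in $Q(H)^\times/N(Q(H))$, equivalently that it is ``metabolic up to norms.'' For the acyclic case this is classical (a Lagrangian is supplied by the image of $H_2$ of one of the two pieces glued along a splitting of the exterior), but here $W$ is \emph{not} acyclic, the intersection pairing is only sesquilinear over $R[H]$ rather than over a field, and $R[H]$ is typically not a PID, so one cannot diagonalize or invoke a clean Witt-group vanishing. I expect to handle this by working over $Q(H)$ and using the prime-power condition to show the pairing becomes hyperbolic there — the self-annihilating submodule being the image of $H_2(W;Q(H)^k)\to H_2(W,E_L;Q(H)^k)$, with half-rank verified via Poincar\'e--Lefschetz duality and a Euler-characteristic count. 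Getting the bookkeeping of the $\pm dh$ indeterminacy consistent across the duality and the gluing is fiddly but routine once the hyperbolicity is in hand; the genuinely hard part is the hyperbolicity itself, and it is exactly there that the ``finite group of prime power order'' hypothesis is indispensable.
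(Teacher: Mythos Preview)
Your overall architecture is right in spirit---use a bounding $4$--manifold, invoke the non-acyclic Milnor duality of Theorem~\ref{thm:torsion-duality}, and let the $p$--group hypothesis control the twisted homology---but the specific $4$--manifold you choose creates the very obstacle you flag, and the paper sidesteps it entirely.

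First, a concrete error: if $W=D^4\sm\nu(\bigcup D_i)$ is the slice-disk exterior, then $\partial W$ is \emph{not} $E_L$; it is $E_L$ with each boundary torus filled by a solid torus $D^2\times S^1$ whose meridian disk is a push-off of $D_i$, i.e.\ $\partial W$ is the zero-surgery manifold on~$L$. So the duality theorem would compute $\tau$ of the zero-surgery, not of $E_L$, and you would still need to pass between the two.

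More importantly, the paper does not use the slice-disk exterior at all. It uses the \emph{concordance exterior}: since $L$ is slice it is concordant to the unlink $U$, and the exterior $W$ of the concordance annuli in $S^3\times[0,1]$ is a \emph{homology cobordism} between $E_L$ and $E_U$. Now the $p$--group hypothesis enters through a single clean lemma (the Cohn-local/Strebel--Levine argument, Lemma~\ref{lem:homology-cohn-local-coeff}): because $H_*(W,E_L;\Z)=0$ and $\a$ factors through a $p$--group, one gets $H_*(W,E_L;Q(H)^k)=0$. In particular $i_*\colon H_2(E_L;Q(H)^k)\to H_2(W;Q(H)^k)$ is onto, so the intersection form lives on the zero space and $\Lambda(W,\phi)=1$ \emph{trivially}. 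Theorem~\ref{thm:torsion-duality} then gives $\tau^{\a\otimes\psi}(\partial W)=1$ in $Q(H)^\times/N(Q(H))$, and splitting $\partial W$ as $E_L\cup -E_U$ yields $\tau^{\a\otimes\psi}(L)=\tau^{\a\otimes\psi}(U)$ modulo norms; finally one quotes the explicit unlink computation (Corollary~\ref{cor:unlink}). The rank equality follows from the same vanishing $H_*(W,E_L;Q(H)^k)=0$.

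So your ``main obstacle''---proving the twisted intersection form is metabolic over $Q(H)$---is not a step that needs to be confronted. It is an artifact of choosing a $4$--manifold that is not a homology cobordism. With the concordance exterior, the half-rank/Lagrangian bookkeeping you worry about never arises, and the $p$--group hypothesis is used exactly once, to promote integral acyclicity of $(W,E_L)$ to $Q(H)^k$--acyclicity. (The extension of $\a$ over $\pi_1(W)$ that you need is handled by Stallings' theorem, since $p$--groups are nilpotent; this is Part~I of the proof of Theorem~\ref{thm:homology-cob-inv}.)
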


\begin{remark}
  Let $K\subset S^3$ be a knot.  Fox and Milnor \cite{FM66} have shown
  that if $K$ is slice, then the untwisted Alexander polynomial of $K$
  factors as $\pm t^lf(t)f(t^{-1})$ for some polynomial $f(t)$ and
  some $l\in \Z$.  It is well--known that for a knot all
  representations which factor through a $p$--group are necessarily
  abelian, in particular one can easily show that the sliceness
  obstruction of Theorem \ref{mainthmintro} reduces to the Fox--Milnor
  obstruction. On the other hand for knots Kirk and Livingston
  \cite{KL99a} used twisted torsion corresponding to
  `Casson--Gordon'--type representations to give sliceness
  obstructions which go beyond Fox--Milnor.  We also refer to
  \cite{KL99b}, \cite{Ta02}, \cite{HKL08}, \cite{Liv09} and also
  \cite{FV09} for more on twisted Alexander polynomials of knots and
  their relation to knot concordance.
\end{remark}

\begin{remark}
\bn
\item If $\a$ is the trivial representation, then Theorem
  \ref{mainthmintro} was proved in the one--variable case by Murasugi
  \cite{Mu67} (cf.\ also \cite{Ka77}) and in the multi--variable case
  it was proved independently by Kawauchi \cite[Theorem~B]{Ka78} and
  Nakagawa \cite{Na78}. These results were reproved by Turaev
  \cite[Theorem~5.4.2]{Tu86} using torsion.
\item Theorem \ref{mainthmintro} is related in spirit to
  \cite[Theorem~2.2]{Fr05} where a sliceness obstruction for links is
  given using the Atiyah--Patodi--Singer eta invariant and unitary
  representations factoring through $p$--groups.  The metabelian case
  was considered earlier by the first author and Ko \cite{CKo99}.
\item We expect the obstruction of Theorem \ref{mainthmintro} to be
  closely related to the discriminant part of Hirzebruch-type
  invariants developed by the first author~\cite{Ch07}.
 \en
\end{remark}

We prove a generalization of Milnor's classical duality theorem for
Reidemeister torsion~\cite{Mi62} and use it as a key ingredient in the
proof of the above sliceness obstruction.  For an even dimensional
compact oriented manifold $W$ endowed with a homomorphism
$\Z[\pi_1(W)] \to Q$ into a field $Q$ with involution such that $W$ is
\emph{$Q$--acyclic}, Milnor proved that the Reidemeister torsion of
$M=\partial W$ over $Q$ is of the form $q\overline q$, where $q$ is
the Reidemeister torsion of~$W$.  In particular, up to norms, the
torsion of $M$ is trivial.  A generalization of this to the twisted
acyclic case is given in \cite{KL99a}.  We generalize these prior
results to the \emph{non-acyclic} case: \emph{the twisted torsion of
  an odd dimensional manifold $M$ is equal to the determinant of the
  twisted intersection pairing of a bounding even-dimensional manifold
  $W$ (up to norms)}.  For a precise statement, see
Theorem~\ref{thm:torsion-duality}.  We hope that this is of
independent interest and useful for further applications.

The paper is organized as follows.  In Section \ref{sec:twisted
  invariants} we introduce twisted torsion of 3-manifolds and links
with non-acyclic twisted homology and we show how these invariants
relate to intersection forms of bounding 4-manifolds.  In Section
\ref{section:cs} we study the torsion invariant of homology cobordant
3-manifolds and in Section \ref{section:lc} we apply these results to
link concordance where we prove our main theorem (which implies
Theorem \ref{mainthmintro}).  In Section \ref{section:prep} we discuss
and to a certain degree classify complex representations which factor
through $p$--groups.  In Section \ref{section:boundarylinks} we show
how to compute twisted torsion of boundary links using a boundary link
Seifert matrix and we discuss the behavior of our invariants for links
which are boundary slice.  Finally in Section
\ref{section:satellitebing} we prove a formula for the twisted
invariants of satellite links and we use this formula to reprove the
fact (first proved by the first author \cite{Ch07}) that the Bing
double of the Figure 8 knot is not slice.

\subsection*{Conventions}
 The following are understood unless it says specifically otherwise:
(i) groups are finitely generated,
(ii) manifolds are compact, orientable and connected,
(iii) rings are understood to be associative, commutative with unit element,
and
(iv) homology and cohomology groups are taken with integral coefficients.

\subsection*{Acknowledgments.}
We would like to thank Baskar Balasubramanyam, David Cimasoni, Taehee
Kim, Vladimir Turaev and Liam Watson for helpful comments and
conversations.  We also thank an anonymous referee for useful
suggestions.  The first author was supported by the National Research
Foundation of Korea (NRF) grants funded by Korean government(MEST)
(Grant No.\ 2009--0094069 and R01--2007--000--11687--0).

%===========================================
\section{Twisted torsion of 3-manifolds and  links}
\label{sec:twisted invariants}

In this section we define twisted torsion invariants of odd
dimensional manifolds for finite dimensional unitary representations
for which the corresponding twisted homology groups are not
necessarily acyclic.  We furthermore investigate twisted torsion of
odd dimensional manifolds cobounding a manifold, along the way we
generalize a theorem of Milnor to the nonacyclic case.  In the last
section we then specialize to the case of 3-manifolds and link
exteriors.

%===========================================
\subsection{Twisted homology and Poincar\'e duality}

Let $(X,Y)$ be a CW-pair with $\pi=\pi_1(X)$, and $\phi\colon \pi \to
\gl(k,R)$ a finite dimensional representation over a ring~$R$.  Denote
by $p\colon \widetilde{X}\to X$ the universal covering of $X$ and
write $\widetilde{Y}=p^{-1}(Y)$.  Recall that $\pi$ acts on the left
of $\widetilde{X}$ as the deck transformation group.  We consider the
cellular chain complex $C_*(\widetilde{X},\widetilde{Y})$ as a right
$\Z\pi$-module via $\s \cdot g:=g^{-1}\s$ for a chain~$\s$.  The
$R^k$--coefficient cellular chain complex of $(X,Y)$ is defined to be
\[
C_*(X,Y; R^k)=C_*(\widetilde{X},\widetilde{Y})\otimes_{\Z \pi} R^k.
\]
where $R^k$ is viewed as a $(\Z\pi,R)$--bimodule via the
representation~$\phi$.  We define \emph{the $i$th twisted homology
  group} to be the $R$-module
\[
H_i^\phi(X,Y;R^k) = H_i(C_*(X,Y;R^k)).
\]
If the representation is understood clearly, then we just write $H_i$
instead of~$H_i^\phi$.

 % and $H^i(X;S^k)=H^i_\a(X;S^k)$.

Now assume that $R$ is a ring with a (possibly trivial) involution
$\ol{\vphantom{a}\hphantom{a}}$.
Given an $R$--module $A$ we denote by $\ol{A}$ the opposite
$R$--module,
i.e. $A=\ol{A}$ as abelian groups, and the multiplication by $r\in R$
on $\ol{A}$ is given by the multiplication by $\ol{r}$ on $A$.
Finally recall that a representation $\phi\colon \pi \to \gl(k,R)$ is
called \emph{unitary} if $\phi(g^{-1})=\overline{\phi(g)}{}^t$ for all
$g\in \pi$.

We now have the following twisted Poincar\'e duality theorem (cf.\ also
\cite[p.\ 91]{Le94}, \cite[Lemma~4.12]{FK06}, and \cite{KL99a}). Here, given
a vector space $V$ over a field $Q$,
we denote the dual vector space
by $V^*=\Hom_Q(V,Q)$.

\begin{theorem}\label{thm:duality}
  Let $W$ be an $n$--manifold with $\partial W=M\cup M'$, $M$ and $M'$
  submanifolds such that $M\cap M'=\partial M=\partial M'$.  Let
  $\phi\colon \pi_1(W)\to \gl(k,Q)$ be a unitary representation over a
  field $Q$ with involution.  Then there exists a natural isomorphism
  \[
  H_i(W,M;Q^k)\cong \ol{H_{n-i}(W,M';Q^k)^*}
  \]
  of $Q$--vector spaces.
\end{theorem}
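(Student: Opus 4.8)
The plan is to prove twisted Poincaré–Lefschetz duality by the standard route: identify twisted homology with twisted cohomology of the dual CW-structure via cap product with a fundamental class, and then identify twisted cohomology with the dual of twisted homology over the field $Q$ using the universal coefficient theorem. More precisely, I would first recall the twisted cohomology groups $H^i(W,M';Q^k) = H^i(\Hom_{\Z\pi}(C_*(\widetilde W,\widetilde{M'}), Q^k))$, where now $Q^k$ is regarded as a left $\Z\pi$-module via $\phi$ and the bimodule structure is used to make the Hom-complex into a $Q$-module. The core geometric input is that $W$ is a compact orientable $n$-manifold with boundary decomposed as $M \cup M'$ along $\partial M = \partial M'$; choosing a handle (or smooth triangulation) structure and its dual, cap product with the fundamental class $[W] \in H_n(W, \partial W)$ induces a chain homotopy equivalence $C_{n-*}(\widetilde W, \widetilde{M'}) \to C^{*}(\widetilde W, \widetilde M)$ of $\Z\pi$-module complexes (using the diagonal approximation and the fact that the deck action is free). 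Tensoring/Hom-ing with $Q^k$ and passing to homology yields
\[
H_i(W,M;Q^k) \cong H^{n-i}(W,M';Q^k).
\]

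Next I would handle the algebra converting cohomology back to the dual of homology. Since $Q$ is a field, $Q^k$ is a free (hence injective) $Q$-module, and the complex $C_*(\widetilde W, \widetilde{M'})$ is a complex of free $\Z\pi$-modules; after tensoring down to $Q$, $C_*(W,M';Q^k)$ is a complex of finite-dimensional $Q$-vector spaces. The universal coefficient spectral sequence (or simply $\Hom_Q(-,Q)$ exactness over a field) gives
\[
H^{n-i}(W,M';Q^k) \cong \Hom_Q\!\big(H_{n-i}(W,M';Q^k),\, Q\big),
\]
where the identification of $\Hom_{\Z\pi}(C_*, Q^k)$ with $\Hom_Q(C_* \otimes_{\Z\pi} Q^k, Q)$ uses the unitary structure: the pairing $Q^k \times Q^k \to Q$, $(v,w) \mapsto \sum \ol{v_j} w_j$, is $\Z\pi$-invariant precisely because $\phi(g^{-1}) = \ol{\phi(g)}^t$, and this is what forces the opposite-module twist $\ol{\phantom{a}}$ to appear. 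Chasing through, one gets $H^{n-i}(W,M';Q^k) \cong \ol{H_{n-i}(W,M';Q^k)^*}$, and combining with the duality isomorphism above (after the index substitution $i \leftrightarrow n-i$) yields the claimed $H_i(W,M;Q^k) \cong \ol{H_{n-i}(W,M';Q^k)^*}$.

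I expect the main obstacle to be bookkeeping rather than conceptual: carefully tracking the three separate module structures on $Q^k$ (the left $\Z\pi$-action via $\phi$ used for cohomology, the right $\Z\pi$-action via $\s \cdot g = g^{-1}\s$ convention used for homology, and the $Q$-action), making sure the conjugation in the unitary condition lands the opposite module $\ol{(-)}$ on the correct side, and verifying naturality of all maps. A secondary technical point is that $W$ need not be smooth or triangulated as stated, so I would either invoke the existence of a CW/handle structure on the compact manifold pair $(W, M, M')$ with $M \cap M' = \partial M = \partial M'$ (this is where the hypothesis on the boundary decomposition is used — it guarantees the Lefschetz-duality chain equivalence with the prescribed boundary pieces) or cite the references \cite[p.~91]{Le94}, \cite[Lemma~4.12]{FK06}, \cite{KL99a} for the duality chain equivalence and only spell out the algebraic UCT step. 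Since the paper already points to these sources, I would keep the argument brief: state the cap-product chain equivalence, note that it is $\Z\pi$-equivariant, tensor with $Q^k$, and finish with the field-coefficient universal coefficient isomorphism together with the unitary-pairing identification.
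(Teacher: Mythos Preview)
Your proposal is correct and follows essentially the same route as the paper's proof: the paper uses the equivariant intersection pairing between a cell structure and its dual to obtain a chain-level isomorphism $C_*(W,M;Q^k)\cong \ol{D_{n-*}(W,M';Q^k)^*}$ (this is exactly your cap-product/dual-cell duality, with the unitary condition absorbed into the pairing) and then applies the universal coefficient theorem over the field~$Q$. The only difference is packaging --- you pass through twisted cohomology $H^{n-i}(W,M';Q^k)$ as an intermediate step and isolate the unitary identification $\Hom_{\Z\pi}(C_*,Q^k)\cong \ol{\Hom_Q(C_*\otimes_{\Z\pi}Q^k,Q)}$ explicitly, whereas the paper folds this into the intersection pairing in one line --- but the mathematical content is identical.
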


The theorem can be proved along the same lines as the standard proof
of Poincar\'e duality.  For the reader's convenience we give an
outline of the proof.

\begin{proof}
  Choose a cell structure of $(W,M)$ to define $C_*(\widetilde W
  ,\widetilde M)$ and let $D_*(\widetilde W, \widetilde M')$ be the
  chain complex of the dual cell structure.
  % Then we have the
  % equivariant intersection pairing
  % \[
  % \langle \hphantom{x} \mbox{,} \hphantom{x}\rangle\colon
  % C_*(\widetilde W, \widetilde M) \times D_{n-*}(\widetilde
  % W,\widetilde M') \to \Z\pi
  % \]
  % which is nonsingular and sesquilinear, i.e., $\langle xg, yh\rangle
  % = g^{-1}\langle x, y\rangle h$ for $g,h\in \pi$.  As mentioned
  % in~\cite[p.\ 91]{Le94}, this gives rise to a nonsingular sesquilinear
  % intersection pairing
  % \[
  % \langle \hphantom{x} \mbox{,} \hphantom{x}\rangle\colon C_*(W,M;Q^k)
  % \times D_{n-*}(W,M';Q^k) \to Q
  % \]
  % which is given by $\langle x\otimes u, y\otimes v \rangle = \ol{u}^t
  % \phi(\langle x,y \rangle)v$. (Here we need the assumption
  % that $\phi$ is unitary.)
  Then the equivariant intersection pairing (e.g., see ~\cite[p.\
  91]{Le94}) gives an isomorphism of chain complexes
  \[
  C_*(W,M;Q^k) \cong \ol{D_{n-*}(W,M';Q^k)^*}.
  \]
  Applying the universal coefficient theorem over $Q$, the conclusion
  follows.
\end{proof}

\subsection{Basic definitions of twisted torsion}\label{section:rt}

\subsubsection*{Torsion of a based chain complex}
We begin by recalling the algebraic setup for torsion invariants.
Suppose $C=\{C_*\}$ is a based chain complex over a field $Q$ and
$\B=\{\B_*\}$ is a basis for $H_*(C)$, i.e., $\B_i$ is a basis of the
$Q$--vector spaces $H_i(C)$.  The \emph{torsion} $\tau(C, \B) \in
Q^\times := Q\sm \{0\}$ is defined as in \cite{Tu01,Tu02}.  (See also
Milnor's classic introduction to torsion \cite{Mi66}, but note that we
follow Turaev's convention, which gives the reciprocal of the torsion
in \cite{Mi66}.) If $H_*(C)$ is identically zero, then we will just
write $\tau(C)\in \Q^\times$ for the torsion.

In the following theorem, we collect well-known algebraic properties
of torsion which will be useful later.  In the theorem, given two
bases $b$ and $b'$ of a vector space, we will denote the basis change
matrix from $b$ to $b'$ by $(b|b')$, as in Milnor~\cite{Mi66} and
Turaev~\cite{Tu01,Tu02}.

\begin{theorem}\label{thm:alg-prop-torsion}\label{thm:mi66}
  Suppose $C$ is a based chain complex over $Q$ and $\B=\{B_i\}$ is a
  basis of $H_*(C)$.
  \begin{enumerate}
  \item Suppose $\B'=\{\B_i'\}$ is another basis of $H_*(C)$.  Then
    \[
    \tau(C,\B)=\tau(C,\B') \cdot \prod_i (\B_i | \B_i')^{(-1)^{i+1}}.
    \]
  \item Let $C'$ be the dual based chain complex given by $C'_i
    =(C_{n-i})^*$ and $\B'$ be the basis of $H_*(C') =
    H_{n-*}(C)^*$ dual to~$\B$.  Then $\tau(C,\B) =
    \tau(C',\B')^{(-1)^{n+1}}$.
  \item Suppose $C_*$ is acyclic.  Choose a basis of the $n$th cycle
    submodule $Z_n = \Ker\{C_n \to C_{n-1}\}$, and
    view
    \[
    \begin{aligned}
      C' &= \{\, Z_n \to C_n \to \cdots\to C_0 \,\}\\
      C'' &= \{\, \cdots \to C_{n+2} \to C_{n+1} \to Z_n \,\}
    \end{aligned}
    \]
    as acyclic based chain complexes (indexed so that $C'_0 = C_0$,
    $C''_0 = Z_n$).  Then $\tau(C) = \tau(C')\cdot
    \tau(C'')^{(-1)^n}$.
  \item Suppose $0\to C'\to C\to C''\to 0$ is a short exact sequence
    of based chain complexes and $\B'$, $\B$, and $\B''$ are bases of
    $H_*(C')$, $H_*(C)$ and $H_*(C'')$, respectively.  We view the
    associated homology long exact sequence as an acyclic complex, say
    $H$, based by $\B$, $\B'$, $\B''$.  Then
    \[
    \tau(C,\B)=\tau(C',\B')\cdot \tau(C'',\B'')\cdot \tau(H).
    \]
  \end{enumerate}
\end{theorem}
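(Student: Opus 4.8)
The plan is to derive all four items from the standard theory of Reidemeister torsion of based chain complexes over a field, following Milnor \cite{Mi66} and Turaev \cite{Tu01,Tu02}; since we use Turaev's normalization (the reciprocal of Milnor's), the only real work is the bookkeeping of signs and exponents.

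For (1), I would unwind the definition of $\tau(C,\B)$: one lifts each homology basis $\B_i$ to a family of cycles in $C_i$, enlarges it by lifts of a chosen basis of the boundary submodule $B_{i-1}\subset C_{i-1}$, and forms the alternating product of the determinants comparing the resulting bases with the preferred bases of the $C_i$. Replacing $\B$ by $\B'$ affects only the block contributed by $H_i(C)$, which sits in degree $i$ and enters with exponent $(-1)^{i+1}$ in Turaev's convention; this is exactly the transformation rule in (1), and indeed the definition is arranged so that it holds. For (2), dualizing replaces $C$ by the complex $C'$ with $C'_i=(C_{n-i})^*$ whose differentials are, up to transposition, those of $C$ with the grading reversed. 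Transposition leaves determinants unchanged and the substitution $i\mapsto n-i$ turns each exponent $(-1)^{i+1}$ into $(-1)^{n-i+1}$, so comparing the two alternating products gives $\tau(C,\B)=\tau(C',\B')^{(-1)^{n+1}}$, where $\B'$ is the basis dual to $\B$ under $H_i(C')\cong H_{n-i}(C)^*$. This is the torsion form of Poincar\'e--Lefschetz duality; see \cite[\S3]{Mi66} and \cite[Section~I]{Tu01}.

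For (4) I would invoke Milnor's multiplicativity theorem: for a short exact sequence $0\to C'\to C\to C''\to 0$ of based chain complexes over $Q$ one has $\tau(C,\B)=\tau(C',\B')\,\tau(C'',\B'')\,\tau(\mathcal H)$, where $\mathcal H$ is the homology long exact sequence regarded as an acyclic based complex (see \cite[\S3]{Mi66}, translated into Turaev's convention as in \cite[Section~I]{Tu01}). Statement (3) is then a formal consequence: the acyclic complex $C$ is presented as an extension assembled from $C'$ and a degree-$n$ shift of $C''$ (the operation of folding $C$ at the cycle submodule $Z_n$), so that $\mathcal H$ vanishes identically and the degree shift produces the exponent $(-1)^n$; alternatively (3) can be read off directly from the definition of $\tau$.

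The one obstacle is purely a matter of bookkeeping: checking that, with Turaev's normalization, the exponents come out as $(-1)^{i+1}$ in (1), $(-1)^{n+1}$ in (2), and $(-1)^n$ in (3), that $\tau(\mathcal H)$ enters (4) with no sign, and that the indexing conventions selected for $C'$ and $C''$ in (3) (namely $C'_0=C_0$ and $C''_0=Z_n$) are the ones compatible with these signs. None of this is conceptually deep; all of it is contained in \cite{Mi66} and \cite{Tu01,Tu02}.
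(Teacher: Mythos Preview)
Your proposal is correct and matches the paper's approach: the paper does not give a proof of this theorem at all, but simply presents it as a collection of well-known facts with references to Milnor \cite{Mi66} and Turaev \cite{Tu01,Tu02}. Your sketch of how each item follows from the standard definitions and Milnor's multiplicativity theorem, together with the sign bookkeeping required by Turaev's normalization, is exactly the kind of verification the paper leaves to the reader.
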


% Since the proofs of the above statements are well known (or at least
% follow from standard arguments), we omit details.

\subsubsection*{Twisted torsion of CW-complexes}
Let $(X,Y)$ be a finite CW-pair with $\pi=\pi_1(X)$ and let $\phi
\colon \pi\to \gl(k,Q)$ be a representation over a field~$Q$.  Let
$\mathcal{B}=\{\B_*\}$ be a basis of $H_*(X,Y;Q^k)$.  The universal
cover $(\tilde X, \tilde Y)$ has a natural cell structure, and the
chain complex $C_*(\widetilde X, \widetilde Y)$ can be based over
$\Z\pi$ by choosing a lift of each cell of $(X,Y)$ and orienting it.
This, together with the standard basis of $Q^k$, gives rise to a
basing of $C_*(X,Y;Q^k)$ over~$Q$.  We can then define the
\emph{twisted torsion}
\[
\tau^\phi(X,Y,\mathcal{B})\in Q^\times
\]
to be the torsion of $C_*(X,Y;Q^k)$ with respect to~$\B$.  We will
drop $\B$ from the notation if $H_*(X,Y;Q^k)=0$.

Standard arguments (e.g., see \cite{Mi66, Tu86, Tu01, Tu02}) show that
$\tau^\phi(X,Y,\mathcal{B})$ is well-defined up to multiplication by
an element in $\pm \det(\phi(\pi))$, and is invariant under simple
homotopy preserving~$\B$.  By Chapman's theorem \cite{Chp74} the
invariant $\tau^\phi(X,Y,\mathcal{B})$ only depends on the
homeomorphism type of $(X,Y)$.  In particular when $(M,N)$ is a
manifold pair, we can define $\tau^\phi(M,N,\mathcal{B})$ by picking
any finite CW--structure for $(M,N)$.

\subsubsection*{Twisted torsion of odd dimensional manifolds}
\label{section:twitor}

Let $M$ be an odd dimensional manifold with $\pi=\pi_1(M)$ and let
$\phi\colon \pi \to \gl(k,Q)$ be a representation over a field $Q$.
In many interesting cases $H_*(M;Q^k)$ will be nontrivial.  For
example the untwisted multivariable Alexander module of a boundary
link with at least two components is always non-torsion.  In order to
define an invariant for the nonacyclic case without referring to a
basis of homology, we now assume the following two conditions hold:

\begin{enumerate}
\item $Q$ is endowed with a (possibly trivial) involution, and $\phi$
  is unitary.
\item $\partial M$ is $Q^k$-acyclic, i.e., $H_*(\partial M;Q^k)=0$.
\end{enumerate}

Since $\phi$ is unitary, we have the Poincar\'e duality isomorphism
\[
H_i(M;Q^k)\cong \ol{H_{n-i}(M,\partial M;Q^k)^*}\cong \ol{H_{n-i}(M;Q^k)^*}
\]
of $Q$--vector spaces, by Theorem \ref{thm:duality} and by
Condition~(2).  Since $M$ is odd dimensional, one can pick a basis
$\B=\{\B_*\}$ for $H_*(M;Q^k)$ with the following property: for
each~$i$, $\B_{i}$ is the dual basis of $\B_{n-i}$ via the above
Poincar\'e duality isomorphism.  We call such a basis $\B=\{B_*\}$ a
\emph{self-dual basis} for $H_*(M;Q^k)$.

\begin{lemma}\label{lem:changebasis}
  Suppose $\B$ and $\B'$ are self-dual bases for $H_*(M;Q^k)$.  Then
  for some $q\in Q^\times$, up to the indeterminacy of the torsion,
  \[
  \tau^{\phi}(M,\B')=\tau^{\phi}(M,\B) \cdot q\ol{q}.
  \]
\end{lemma}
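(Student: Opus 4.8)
The plan is to reduce the change-of-self-dual-basis computation to the basis-change formula in Theorem~\ref{thm:alg-prop-torsion}(1) and then show that the resulting product of determinants of basis-change matrices is a norm. First I would write $\tau^\phi(M,\B') = \tau^\phi(M,\B) \cdot \prod_i (\B_i \mid \B_i')^{(-1)^{i+1}}$, so the whole problem becomes: prove that $\prod_i (\B_i \mid \B_i')^{(-1)^{i+1}}$ equals $\pm q\ol{q}$ for some $q \in Q^\times$, modulo the indeterminacy $\pm\det(\phi(\pi))$ of the torsion. Here $n = \dim M$ is odd.

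Next I would exploit the defining property of a self-dual basis: for each $i$, $\B_i$ is the basis of $H_i(M;Q^k)$ dual to $\B_{n-i}$ under the Poincar\'e duality isomorphism $P\colon H_i(M;Q^k) \xrightarrow{\ \cong\ } \ol{H_{n-i}(M;Q^k)^*}$ of Theorem~\ref{thm:duality}. The key point is that $P$ is (anti)natural and essentially self-adjoint: the duality pairing $H_i \times \ol{H_{n-i}} \to Q$ is, up to the sign $(-1)^{i(n-i)}$, the conjugate-transpose of the pairing $H_{n-i} \times \ol{H_i} \to Q$. Concretely, if $\B_i = \{e_1,\dots,e_r\}$ and $\B_{n-i} = \{f_1,\dots,f_r\}$, then $\langle e_a, f_b\rangle = \delta_{ab}$, and similarly $\langle f_b, e_a\rangle = \pm\delta_{ab}$; the compatibility of these two statements forces a precise relation between the matrices $(\B_i \mid \B_i')$ and $(\B_{n-i} \mid \B_{n-i}')$. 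Specifically, expressing the sesquilinear duality pairing in the two bases $\B$ and $\B'$ and comparing, one finds that $(\B_{n-i}\mid \B_{n-i}')$ is the inverse conjugate-transpose of $(\B_i\mid\B_i')$, so that $\det(\B_{n-i}\mid \B_{n-i}') = \ol{\det(\B_i\mid\B_i')}{}^{-1}$ up to a sign.

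Then I would pair up the factors in $\prod_i(\B_i\mid\B_i')^{(-1)^{i+1}}$ by the involution $i \leftrightarrow n-i$. Since $n$ is odd, there is no fixed middle dimension, so the index set partitions into pairs $\{i, n-i\}$. Writing $d_i := \det(\B_i \mid \B_i')$, the contribution of such a pair is $d_i^{(-1)^{i+1}} \cdot d_{n-i}^{(-1)^{n-i+1}}$; using $d_{n-i} = \pm\ol{d_i}^{-1}$ and the fact that $(-1)^{n-i+1} = (-1)^{i}$ because $n$ is odd, this becomes $d_i^{(-1)^{i+1}} \cdot (\pm\ol{d_i}^{-1})^{(-1)^{i}} = \pm d_i^{(-1)^{i+1}} \cdot \ol{d_i}^{(-1)^{i+1}} = \pm(d_i \ol{d_i})^{(-1)^{i+1}}$. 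Taking the product over all pairs gives $\pm q\ol{q}$ with $q = \prod d_i^{\pm 1}$ an explicit element of $Q^\times$ (absorbing the sign into the $\pm$ and any finite ambiguity into the torsion indeterminacy).

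The main obstacle I expect is the bookkeeping of signs and the precise self-adjointness statement for the Poincar\'e duality pairing in the unitary twisted setting — i.e., justifying that, at the level of homology bases, $(\B_{n-i}\mid\B_{n-i}')$ really is the inverse conjugate-transpose of $(\B_i\mid\B_i')$ up to a controlled sign. This requires tracking through the chain-level intersection pairing $C_*(W,M;Q^k) \cong \ol{D_{n-*}(W,M';Q^k)^*}$ used in the proof of Theorem~\ref{thm:duality} (with $W = M$ and $M' = \partial M$), checking that the induced pairing on homology is sesquilinear and (graded) symmetric up to the standard sign $(-1)^{i(n-i)}$, and verifying that the self-duality of $\B$ in the sense defined before Lemma~\ref{lem:changebasis} is exactly the statement that these matrices are mutually inverse-conjugate-transpose. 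Once that linear-algebra fact is pinned down, the pairing-up argument above is routine and the $\pm q\ol q$ form falls out immediately.
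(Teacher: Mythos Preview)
Your proposal is correct and follows essentially the same approach as the paper: apply the basis-change formula from Theorem~\ref{thm:alg-prop-torsion}(1), use self-duality to obtain $(\B_{n-i}\mid\B'_{n-i}) = \ol{(\B_i\mid\B'_i)}^{-1}$, and pair the factors $i\leftrightarrow n-i$ to see that the product is a norm. The paper's proof is much terser---it simply asserts the key relation and invokes Theorem~\ref{thm:alg-prop-torsion}---but the content is the same.
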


\begin{proof}
  Let $(\B_i|\B'_i)$ be the determinant of the base change matrix from
  $\B_i$ to $\B'_i$.  Then $(\B_{n-i}|\B'_{n-i}) =
  \overline{(\B_i|\B'_i)}{\mathstrut}^{-1}$.  The desired conclusion
  follows immediately from Theorem~\ref{thm:alg-prop-torsion}.
\end{proof}

We define the \emph{norm subgroup} of $Q^\times$ to be $N(Q)=\{\pm
q\ol{q} \mid q\in Q^\times\}$, and we say $f\in Q^\times$ is a
\emph{norm} when $f\in N(Q)$.  We define
\[
\tau^{\phi}(M) = \tau^{\phi}(M,\B) \text{ as an element in }
Q^\times/ N(Q)
\]
where $\B$ is a self-dual basis of $H_*(M;Q^k)$.  By
Lemma~\ref{lem:changebasis} the invariant $\tau^\phi(M)$ is
well-defined up to multiplication by an element in $\pm
\det(\phi(\pi_1(M)))$.  This invariant was first introduced by Turaev
\cite[Section~5.1]{Tu86} in the untwisted case.

In the following, given $f\in Q$, we will sometimes write
$\tau^{\phi}(M)\doteq f \in Q^\times/N(Q)$, to indicate that there
exists a representative of $\tau^{\phi}(M)$ which equals $f$.

\subsection{Twisted torsion and bounding manifolds}

In this subsection we prove a \emph{non-acyclic} generalization
of a well-known theorem of Milnor~\cite[Theorem~2]{Mi62} and of its
twisted analogue due to Kirk and Livingston~\cite[Theorem 5.1 and
Corollary 5.3]{KL99a}.

Let $W$ be a $2r$--dimensional manifold with (possibly disconnected)
boundary~$M$.  Let $\phi\colon\pi_1(W) \to \gl(k,Q)$ be a unitary
representation.  Now consider the map
\[
H_r(W;Q^k)\xrightarrow{\cong}
\ol{H_{r}(W,M';Q^k)^*}\xrightarrow{\iota^*} \ol{H_{r}(W;Q^k)^*}
\]
where the first map is the isomorphism given by Theorem
\ref{thm:duality} and the second map is induced by $\iota\colon
(W,\emptyset) \to (W,M)$.  This map gives rise to a pairing
\[
\l\colon H_r(W;Q^k)\to H_r(W;Q^k)\to Q
\]
which is well-known to be $(-1)^r$-hermitian. This pairing is called
the \emph{equivariant intersection form of $(W,\phi)$}.  This form is
in general singular, in fact for any $x\in \mbox{Im}\{H_r(M;Q^k)\to
H_r(W;Q^k)\}$ and $y\in H_r(W;Q^k)$ we have $\l(x,y)=\l(y,x)=0$.  In
particular $\l$ gives rise to a pairing on $H_r(W;Q^k)/i_* H_r(M;Q^k)$
which turns out to be non-singular.

We now pick a basis $\BB=\{v_1,\dots,v_s\}$ for $H_r(W;Q^k)/i_*
H_r(M;Q^k)$ and we compute $\det(\l(v_i,v_j))\in Q^\times$. Note that
if we change the basis, then the determinant of the form changes by a
norm.  Put differently, we obtain a well-defined invariant
\[
\Lambda(W,\phi):=\det(\l(v_i,v_j))\in Q^\times/N(Q).
\]
Note that $\Lambda(W,\phi)=\ol{\Lambda(W,\phi)}$ since $\l$ is
$(-1)^r$-hermitian.

\begin{theorem}\label{thm:torsion-duality}
  Suppose $W$ is a $2r$--dimensional manifold with (possibly
  disconnected) boundary~$M$.  Let $\phi\colon\pi_1(W) \to \gl(k,Q)$
  be a unitary representation.  Then
  \[ \tau^\phi(M) = \Lambda(W,\phi)\in Q^\times/N(Q)\] up to the
  indeterminacy $\pm \det(\phi(\pi_1(M)))$.
\end{theorem}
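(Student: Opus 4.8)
The plan is to compare the torsion of $M$, computed with a carefully chosen self-dual homology basis, with the torsion of the pair $(W,M)$, and to extract the intersection form determinant from the comparison. First I would choose a handle/CW-structure on $(W,M)$ and consider the short exact sequence of based chain complexes
\[
0 \to C_*(M;Q^k) \to C_*(W;Q^k) \to C_*(W,M;Q^k) \to 0,
\]
to which Theorem~\ref{thm:alg-prop-torsion}(4) applies: $\tau^\phi(W) = \tau^\phi(M,\B_M)\cdot \tau^\phi(W,M,\B_{W,M})\cdot \tau(\HH)$, where $\HH$ is the long exact homology sequence viewed as an acyclic based complex. The key structural input is twisted Poincar\'e duality (Theorem~\ref{thm:duality}): since $\partial M = \emptyset$ in the relevant range and $W$ has boundary $M$, we get $C_*(W,M;Q^k) \cong \ol{D_{2r-*}(W;Q^k)^*}$ at the chain level, so by Theorem~\ref{thm:alg-prop-torsion}(2) the torsion $\tau^\phi(W,M,\B_{W,M})$ is, up to the relevant indeterminacy, $\ol{\tau^\phi(W,\B_W)}^{\,\pm 1}$ for the dual homology basis. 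Thus modulo norms the product $\tau^\phi(W)\cdot\tau^\phi(W,M)$ contributes only a norm, and the entire content of the formula is pushed into $\tau(\HH)$.

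The second step is therefore to compute $\tau(\HH)$ up to norms. The long exact sequence of $(W,M)$ in the middle degrees is
\[
H_r(M) \xrightarrow{i_*} H_r(W) \xrightarrow{j_*} H_r(W,M) \xrightarrow{\partial} H_{r-1}(M) \to \cdots
\]
and the self-dual basis of $H_*(M)$ together with the dual bases on $H_*(W)$ and $H_*(W,M)$ makes the lower-degree half of the sequence ``mirror'' the upper-degree half under the duality involution, so its torsion contribution is again a norm. What remains is the middle three-term stretch involving $H_r(W)/i_*H_r(M)$ mapping isomorphically onto $\ker\partial \subseteq H_r(W,M)$, and the point is that, after identifying $H_r(W,M) \cong \ol{H_r(W)^*}$ via duality, the composite $H_r(W) \to H_r(W,M) \cong \ol{H_r(W)^*}$ is exactly the adjoint of the equivariant intersection form $\l$. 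Passing to the quotient $V := H_r(W;Q^k)/i_*H_r(M;Q^k)$, on which $\l$ is non-singular, the induced map $V \to \ol{V^*}$ has matrix $(\l(v_i,v_j))$ in the basis $\BB$, and the torsion of this isolated stretch of the acyclic complex $\HH$ is precisely $\det(\l(v_i,v_j)) = \Lambda(W,\phi)$ up to norms. Combining, $\tau^\phi(M) = \tau(\HH) = \Lambda(W,\phi) \in Q^\times/N(Q)$ up to the indeterminacy $\pm\det(\phi(\pi_1(M)))$.

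The main obstacle I anticipate is bookkeeping the bases and the duality involution consistently enough that all the ``off-middle'' contributions genuinely cancel as norms rather than as honest equalities: one must verify that choosing a self-dual basis on $H_*(M)$ and genuinely dual bases on $H_*(W)$ and $H_*(W,M)$ makes the homology long exact sequence $\HH$ split (up to norm-torsion) into a reflection-symmetric part and the single middle stretch carrying $\l$. This requires checking compatibility of the Poincar\'e--Lefschetz duality isomorphisms for $M$, $W$, and $(W,M)$ with the connecting homomorphism $\partial$ — essentially the naturality square relating the duality of $M$ to that of $(W,M)$ — and tracking how the sign exponents $(-1)^{i}$ in Theorem~\ref{thm:alg-prop-torsion}(1),(2) interact with the hermitian property of $\l$. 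A secondary subtlety is that $W$ need not be $Q^k$-acyclic, so $\tau^\phi(W)$ and $\tau^\phi(W,M)$ are only defined relative to chosen homology bases; the argument must be arranged so that these two quantities appear only in the symmetric combination $\tau^\phi(W)\cdot\ol{\tau^\phi(W)}^{\pm1}$ (up to norms), which is what the chain-level duality $C_*(W,M) \cong \ol{D_{2r-*}(W)^*}$ guarantees once the homology bases are chosen dually.
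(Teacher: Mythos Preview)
Your proposal is correct and follows essentially the same approach as the paper's proof: apply Theorem~\ref{thm:alg-prop-torsion}(4) to the short exact sequence $0\to C_*(M)\to C_*(W)\to C_*(W,M)\to 0$, use Lemma~\ref{lem:torsion-duality} to pair $\tau^\phi(W,\B)$ with $\tau^\phi(W,M,\B'')$ as a norm, and then analyze the torsion of the homology long exact sequence by splitting it at the image $P$ of $H_r(W)\to H_r(W,M)$ so that the two halves are dual (contributing a norm) while the remaining base-change $(b\mid\overline{b^*})$ on $P\cong H_r(W)/i_*H_r(M)$ is exactly~$\Lambda(W,\phi)$. The only refinement the paper makes over your sketch is to invoke Theorem~\ref{thm:alg-prop-torsion}(3) explicitly for the splitting, which cleanly handles the bookkeeping you flagged as the main obstacle.
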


In the following we will only apply the theorem to the case that
$\Lambda(W,\phi)=1$, but we hope that the general case is of
independent interest.
% In particular the general case is needed to study the precise
% relationship between the obstructions of this paper and the
% invariants introduced in~\cite{Ch07}.

To prove Theorem~\ref{thm:torsion-duality} we need the following
duality of torsion, which is essentially due to Milnor~\cite{Mi62}.
(See also Kirk and Livingston~\cite{KL99a} for the twisted case.)

\begin{lemma}\label{lem:torsion-duality}
  Suppose $W$ is an $n$--manifold and $M$, $M'$ are submanifolds of
  $\partial W$ such that $\partial W=M\cup M'$ and $M \cap M'=\partial
  M=\partial M'$.  Suppose $\phi\colon \pi_1(W) \to \gl(k,Q)$ is a
  unitary representation, $\B=\{\B_*\}$ is a basis for $H_*(W,M;Q^k)$,
  and $\B'$ is the dual basis for
  $H_*(W,M';Q^k)=\overline{H_{n-*}(W,M;Q^k)}{\mathstrut}^*$.  Then
  \[
  \tau^\phi(W,M,\B) =
  \overline{\tau^\phi(W,M',\B')}{\mathstrut}^{(-1)^{n+1}}.
  \]
\end{lemma}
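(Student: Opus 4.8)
The plan is to prove Lemma~\ref{lem:torsion-duality} by reducing the duality of torsion to the chain-level intersection isomorphism of Theorem~\ref{thm:duality} together with the algebraic properties collected in Theorem~\ref{thm:alg-prop-torsion}. First I would fix a CW-structure on $(W,M)$, obtaining the based chain complex $C_* = C_*(W,M;Q^k)$, and let $D_*$ be the chain complex of the dual cell structure computing $H_*(W,M';Q^k)$. The equivariant intersection pairing used in the proof of Theorem~\ref{thm:duality} gives an isomorphism of based chain complexes $C_* \cong \overline{D_{n-*}{}^*}$; here the key point is that the dual cell basis, combined with the standard basis of $Q^k$ and the unitarity of $\phi$, is carried by this isomorphism precisely to the basis dual to the chosen cell basis of $C_*$, up to an element of $\pm\det(\phi(\pi_1(W)))$ (which is exactly the allowed indeterminacy of twisted torsion). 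So at the chain level the torsion of $C_*$ based by $\B$ and the torsion of $\overline{D_{n-*}{}^*}$ based by $\overline{\B^*}$ agree.

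Next I would invoke part (2) of Theorem~\ref{thm:alg-prop-torsion}: for a based chain complex $E_*$ of length $n$ with homology basis $\mathcal{C}$, one has $\tau(E,\mathcal{C}) = \tau(E',\mathcal{C}')^{(-1)^{n+1}}$ where $E'_i = (E_{n-i})^*$ and $\mathcal{C}'$ is the dual homology basis. Applying this with $E_* = D_*(W,M';Q^k)$, whose dual is (up to the conjugation involution) the complex $C_*$, converts $\tau^\phi(W,M',\B')$ into $\overline{\tau^\phi(W,M,\B)}{}^{(-1)^{n+1}}$; rearranging and using that the involution is an order-two automorphism of $Q^\times$ (so that conjugation commutes with taking $(-1)^{n+1}$-powers) yields the stated identity $\tau^\phi(W,M,\B) = \overline{\tau^\phi(W,M',\B')}{}^{(-1)^{n+1}}$. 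One must be slightly careful to track how the conjugation from the opposite-module structure $\overline{(\ \cdot\ )}$ interacts with the base-change determinants, but since all base changes here are changes between a basis and its exact dual, the conjugation simply turns each contributing determinant into the inverse of its conjugate, which is accounted for by part (1) of Theorem~\ref{thm:alg-prop-torsion}.

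The main obstacle I anticipate is the bookkeeping in the first step: verifying that the chain-level intersection isomorphism $C_*(W,M;Q^k) \cong \overline{D_{n-*}(W,M';Q^k)^*}$ is genuinely an isomorphism of \emph{based} complexes, meaning that it sends the preferred cell-and-$Q^k$ basis to within $\pm\det(\phi(\pi_1(W)))$ of the dual of the preferred basis on the other side. This requires unwinding the definition of the equivariant intersection pairing on $C_*(\widetilde W,\widetilde M) \otimes_{\Z\pi} Q^k$, checking the signs coming from orientations of dual cells (these produce the $(-1)^{n+1}$, or can be absorbed into the $\pm$ indeterminacy), and using the identity $\phi(g^{-1}) = \overline{\phi(g)}{}^t$ to see that the $Q^k$-pairing is the standard Hermitian one after passing to the opposite module. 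Everything downstream is then a formal consequence of Theorem~\ref{thm:alg-prop-torsion}. Since the authors explicitly attribute the argument to Milnor~\cite{Mi62} and Kirk--Livingston~\cite{KL99a}, I would expect the write-up to be short, citing those sources for the chain-level duality and supplying only the torsion bookkeeping.
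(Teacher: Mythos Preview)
Your proposal is correct and follows essentially the same route as the paper: the authors' proof consists of exactly the two ingredients you identify, namely the chain-level duality $C_i(W,M;Q^k)\cong \overline{D_{n-i}(W,M';Q^k)}^*$ from the proof of Theorem~\ref{thm:duality} together with Theorem~\ref{thm:alg-prop-torsion}~(2). The paper dispatches this in a single sentence without spelling out the bookkeeping you (rightly) flag, so your write-up is simply a more detailed version of the same argument.
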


\begin{proof}
  The lemma follows immediately from the duality
  \[ C_i(W,M;Q^k)\cong
  \overline{D_{n-i}(W, M';Q^k)}{\mathstrut}^*\] (see the proof of
  Theorem~\ref{thm:duality}) and Theorem~\ref{thm:alg-prop-torsion}.
\end{proof}

\begin{proof}[Proof of Theorem~\ref{thm:torsion-duality}]
  Choose a basis $\B$ for $H_*(W;Q^k)$, and choose a self-dual basis
  $\B'$ for $H_*(M;Q^k)$.  Let $\B''$ be the basis of
  $H_*(W,M;Q^k)=\overline{H_{n-*}(W;Q^k)}{\mathstrut}^*$ which is dual
  to~$\B$.  From (the proof of) Theorem \ref{thm:duality},
  Theorem~\ref{thm:alg-prop-torsion}, and
  Lemma~\ref{lem:torsion-duality}, it follows that
  \[
  \tau^\phi(W,\B)= \tau_0 \cdot
  \tau^\phi(M,\B') \cdot \overline{\tau^\phi(W,\B)}^{(-1)^{n+1}}
  \]
  where $\tau_0$ is the torsion of the acyclic chain complex ($=$
  homology long exact sequence)
  \[
  \cdots \to H_{i+1}(W,M;Q^k) \to H_i(M;Q^k) \to H_i(W;Q^k) \to
  H_i(W,M;Q^k) \to \cdots
  \]
  which is based by $\B$, $\B'$, $\B''$.  We break the long exact
  sequence at $i_*\colon H_r(W;Q^k) \to H_r(W,M;Q^k)$ as in
  Theorem~\ref{thm:alg-prop-torsion}: let $P$ be the image of $i_*$,
  choose a basis $b$ for $P$, and let
  \[
  \begin{aligned}
    C' &= \{ \, P \to H_r(W,M;Q^k) \to \cdots \to
    H_0(W,M;Q^k) \, \}, \\
    C'' &= \{ \, \cdots \to H_r(M;Q^k) \to H_r(W;Q^k) \to P \, \}.
  \end{aligned}
  \]
  Then by Theorem~\ref{thm:alg-prop-torsion} (3) we have $\tau_0 =
  \tau(C') \tau(C'')^{(-1)^r}$.  Since $\B'$ is self-dual,
  $C''=\{C''_i\}$ is canonically isomorphic, as a based chain complex, with the
  dual chain complex $\{\overline{C'_{3r+1-i}}{\mathstrut}^*\}$ of
  $C'$ except $C''_0 = P$.  Therefore we have
  \[
  \tau(C'')=\overline{\tau(C')}^{(-1)^r} \cdot \big(\overline{b^*}|b
  \big)
  \]
  where $b^*$ is the dual basis of $b$ for $P^*$.  It follows that
  \[
  \tau^\phi(M,\B')= \tau^\phi(W,\B) \cdot \overline{\tau^\phi(W,\B)}
  \cdot \tau(C')^{-1} \cdot \overline{\tau(C')}{}^{-1} \cdot
  \big(b|\overline{b^*}\big)^{(-1)^r}.
  \]
  Note that by definition we have $\big(b|\overline{b^*})=\Lambda(W,\phi)=:D\in Q^\times/N(Q)$.
  Since the intersection form is $(-1)^r$-hermitian, $D=\pm\overline
  D$ and so $D^{-1}\equiv D$ in $Q^\times/N(Q)$.  This
  completes the proof.
\end{proof}

\begin{remark}
  If $W$ is $Q^k$--acyclic, then $D=1$ automatically and $\tau(C')=1$
  in the above proof.  So $\tau^\phi(M)=q\overline{q}$ up to $\pm\det
  (\phi(\pi_1(M)))$, where $q=\tau^\phi(W)$.  This shows our
  non-acyclic result specializes to Milnor's
  theorem~\cite[Theorem~2]{Mi62} and its twisted analogue due to Kirk
  and Livingston~\cite[Theorem 5.1 and Corollary 5.3]{KL99a}.
\end{remark}

\subsection{Twisted torsion of 3-manifolds and links}\label{section:twi3mfd}

In this section we now specialize to the case of 3-manifolds.  Let $M$
be a 3-manifold with empty or toroidal boundary (e.g. a link
exterior). We write $\pi=\pi_1(M)$.  Let $\psi\colon \pi\to H$ be an
epimorphism onto a nontrivial free abelian group~$H$.  Let $R$ be a
domain with involution. We equip $R[H]$ with the involution given by
$\ol{rh}=\ol{r}h^{-1}$ for $r\in R$ and $h\in H$. This extends to an
involution on $Q(H)$, the quotient field of $R[H]$.  Now let
$\alpha\colon\pi\to \gl(k,R)$ be a unitary representation.  Using $\a$
and $\psi$, we define a left $\Z[\pi]$-module structure on $R[H]^k :=
R^k\otimes_R R[H]$ as follows:
\[
g\cdot (v\otimes p):= (\a(g)\cdot v)\otimes (\psi(g)p)
\]
where $g\in \pi$, $v \in R^k$ and $p\in R[H]$.  This extends to a
$\Z[\pi]$-module structure on $Q(H)^k$.  It can be seen that
$\alpha\otimes \psi\colon \pi \to \gl(k,Q(H))$ is unitary since
$\alpha$ is unitary.  We will several times make use of the following
simple fact:
\[
\det((\a\otimes \psi)(\pi))\subset \det(\a(\pi))\cdot H.
\]
We now say that $\psi\colon \pi\to H$ is \emph{admissible} if $\psi$
restricted to any boundary component of $M$ is nontrivial. Note that
$\psi$ is always admissible if $M$ is closed.  If $\psi$ is admissible
then Condition (2) in Section \ref{section:twitor} is satisfied (e.g.,
see \cite[Proposition~3.5]{KL99a} and \cite[Section~3.3]{KL99a}) and
by the discussion of the previous section we thus obtain an invariant
\[
\tau^{\a\otimes \psi}(M)\in Q(H)^\times/N(Q(H))
\]
well-defined up to multiplication by an element of the form $\pm dh$
with $d\in \det(\a(\pi))$ and $h\in H$.

\begin{remark}
  Note that if $\rank(H)$ is nonzero, then in the above setting we
  have $H_i(M;Q(H)^k)=0$ for $i=0,3$ (see e.g. \cite{FK06,FK08}), so
  that it suffices to choose $\B_1$ in the definition of torsion.
\end{remark}

We now specialize even further, namely to the case of link exteriors.
Let $L\subset S^3$ be an oriented $m$--component link.  We denote the
exterior by $E_L$ and the (oriented) meridians by $\mu_1,\dots,\mu_m$.
Using the basis $\mu_1,\dots,\mu_m$ we can now naturally identify
$H_1(E_L;\Z)$ with $\Z^m$.  We say that a homomorphism
$\psi\colon\Z^m\to H$ is \emph{admissible} if $\psi$ is an epimorphism
onto a nontrivial free abelian group $H$ such that the epimorphism is
nontrivial on each subsummand of $\Z^m=\Z \oplus \dots \oplus \Z$.  In
this case the induced map $\pi_1(E_L) \to H$ is admissible in the
above sense, so that we obtain a well-defined torsion invariant
\[
\tau^{\a\otimes \psi}(L) :=\tau^{\a\otimes \psi}(E_L) \in
Q(H)^\times / N(Q(H)).
\]

%==============================================================
\section{Homology cobordism and twisted torsion}\label{section:cs}

In this section we investigate the behaviour of the twisted torsion
under homology cobordism.  Recall that two $3$--manifolds $M$ and $M'$
(possibly with nonempty toroidal boundary) are said to be \emph{homology
  cobordant} if there exists a $4$--manifold $W$ containing $M$ and $M'$
as submanifolds such that $\partial W = M \cup -M'$, $\partial M = M
\cap M' = \partial M'$, and the inclusions $M \to W$ and $M' \to W$
induce isomorphisms on $H_*(-;\Z)$.

From now on, $R\subset \C$ is always assumed to be a subring closed
under complex conjugation. We denote by $P_k(R,\pi)$ the set of
representations $\pi\to \gl(k,R)$ factoring through a $p$-group. We
will continuously make use of the well-known fact that complex
representations factoring through finite groups are necessarily
unitary.

For $\psi\colon \pi_1(M) \to H$ and $\alpha\in
P_k(R,\pi)$, we define
\[ \rank(M,\alpha,\psi):=\dim_{Q(H)}
H_1^{\alpha\otimes\psi}(M;Q(H)^k).\]  Our main result is the following:

\begin{theorem}\label{thm:homology-cob-inv}
  Suppose two $3$--manifolds $M$ and $M'$ are homology cobordant and
  $H$ is a free abelian group.  Then there are bijections
  \begin{gather*}
    \Psi\colon
    \Hom(\pi_1(M),H) \to \Hom(\pi_1(M'),H) \\
    \Phi\colon P_k(R,\pi_1(M)) \to P_k(R,\pi_1(M'))
  \end{gather*}
  such that for any $\psi\colon \pi_1(M) \to H$, $\psi$ is admissible
  if and only if $\Psi(\psi)$ is admissible, and in this case, for any
  $\alpha\in P_k(R,\pi_1(M))$, we have
  \[
  \rank(M,\alpha,\psi)=\rank(M',\Phi(\alpha),\Psi(\psi))
  \]
  and
  \[
  \tau^{\alpha\otimes\psi}(M) =\pm dh
  \tau^{\Phi(\alpha)\otimes\Psi(\psi)}(M') \text{ in } Q(H)^\times
  / N(Q(H))
  \]
  for some $d\in \det(\a(\pi_1(M)))=\det((\Psi(\alpha))(\pi_1(M')))$ and $h\in H$.
\end{theorem}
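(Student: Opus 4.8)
The plan is to show that homology cobordant $3$-manifolds have, over the twisted ring $Q(H)^k$, the same $H_1$-rank and the same torsion up to norms. Let $W$ be a homology cobordism between $M$ and $M'$, so $\partial W = M \cup -M'$ with $M \cap M' = \partial M = \partial M'$, and $M \hookrightarrow W$, $M' \hookrightarrow W$ induce isomorphisms on integral homology. First I would set up the correspondences $\Psi$ and $\Phi$. Since $\pi_1(W)$ receives maps from both $\pi_1(M)$ and $\pi_1(M')$, and since $H_1(M;\Z) \xrightarrow{\cong} H_1(W;\Z) \xleftarrow{\cong} H_1(M';\Z)$, any homomorphism $\psi\colon \pi_1(M)\to H$ to the free abelian group $H$ factors through $H_1(M;\Z)$ and hence, via $H_1(W;\Z)$, extends uniquely to $\psi_W\colon \pi_1(W)\to H$; restricting to $\pi_1(M')$ gives $\Psi(\psi)$. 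The map $\psi_W$ restricts nontrivially to a boundary component of $M$ iff it is nontrivial on the corresponding (via $H_1(W)$) boundary class of $M'$ — here I would use that $W$ is a homology cobordism and the boundary tori of $M$ and $M'$ are identified through $\partial W$, or more carefully that a curve on $\partial M$ bounding in $M$ also bounds in $W$ and hence its image bounds in $M'$ — so admissibility is preserved. For $\Phi$, a representation $\alpha\colon \pi_1(M)\to \gl(k,R)$ factoring through a $p$-group $G$ corresponds to a surjection $\pi_1(M)\twoheadrightarrow G$; I claim this extends over $\pi_1(W)$. This is the standard argument that $\pi_1(M)\to\pi_1(W)$ induces an isomorphism on $H_1(-;\F_p)$ and a surjection on $H_2(-;\F_p)$ (by the integral homology isomorphism and the universal coefficient/Künneth theorems), hence by Stallings' theorem induces an isomorphism on all lower central series quotients mod $p$, and a finite $p$-group is nilpotent, so the map $\pi_1(M)\to G$ factors through a nilpotent $p$-quotient of $\pi_1(W)$; composing $\pi_1(W)\to G$ with the inclusion of $\pi_1(M')$ and noting the same argument applies symmetrically gives a bijection $\Phi$. (Unitarity of $\alpha$ and of the extension is automatic since they factor through finite groups and $R$ is closed under complex conjugation.)

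Next, with $\phi := (\alpha_W \otimes \psi_W)\colon \pi_1(W)\to \gl(k, Q(H))$ — a unitary representation over the field $Q(H)$ with involution — I would compute the twisted homology of $W$ relative to its two boundary pieces. The key homological input is that the inclusions $M\hookrightarrow W$ and $M'\hookrightarrow W$ induce isomorphisms on $H_*(-;Q(H)^k)$. To see this: $W$ is built from $M\times[0,1]$ by attaching handles, and the integral homology isomorphism $H_*(M;\Z)\xrightarrow{\cong} H_*(W;\Z)$ (together with the same for $M'$) means that $H_*(W,M;\Z)=0$. One then needs the analogous vanishing with $Q(H)^k$-coefficients. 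Here I would invoke the standard fact — which for the untwisted $H=H$, $k=1$ case is classical and in the twisted case follows because $Q(H)$ is a flat $\Z[H]$-module localization and $\alpha_W$ factors through a finite group so $R[H]^k$ is finitely generated free over $R[H]$ — that a relative integral homology isomorphism becomes a $Q(H)^k$-homology isomorphism. Concretely: $H_*(W,M;Q(H)^k)$ can be computed from the $\Z\pi_1(W)$-chain complex $C_*(\widetilde W, \widetilde M)$, which is acyclic over $\Z[\Z^d]$ after passing to the cover corresponding to $\psi_W$ — actually I would argue via a transfer/Milnor-exact-sequence style argument, or simply note $C_*(\widetilde W,\widetilde M)\otimes_{\Z\pi_1(W)} Q(H)^k$ is a complex of finitely generated free $Q(H)$-modules whose Euler characteristic and homology can be pinned down from $\chi(W,M)=0$ and the acyclicity of $W$ rel $M$ over the group ring. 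This is the technical heart and the step I expect to be the main obstacle: correctly establishing $H_*(W,M;Q(H)^k)=0=H_*(W,M';Q(H)^k)$ from the definition of homology cobordism, being careful that "homology cobordism" as defined in the paper already gives relative integral acyclicity and that the representation factoring through a finite group is precisely what lets one bootstrap to the twisted coefficients.

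Granting $H_*(W,M;Q(H)^k)=0$, the rank statement is immediate: the long exact sequence of $(W,M)$ gives $H_1(M;Q(H)^k)\xrightarrow{\cong} H_1(W;Q(H)^k)$, and symmetrically $H_1(M';Q(H)^k)\xrightarrow{\cong} H_1(W;Q(H)^k)$, so $\rank(M,\alpha,\psi)=\dim_{Q(H)} H_1(W;Q(H)^k)=\rank(M',\Phi(\alpha),\Psi(\psi))$. For the torsion, I would apply Theorem~\ref{thm:torsion-duality} to $W$ (with $r=2$): it gives $\tau^{\alpha\otimes\psi}(M \cup -M') \doteq \Lambda(W,\phi)$ in $Q(H)^\times/N(Q(H))$, up to $\pm\det(\phi(\pi_1(W)))\subset \pm\det(\alpha(\pi_1(W)))\cdot H$. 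Since the twisted homology of $M$ and $M'$ maps isomorphically to that of $W$, the equivariant intersection form of $W$ vanishes on $H_2(W;Q(H)^k)$ modulo the image of $H_2(\partial W;Q(H)^k)$ — indeed $H_2(W;Q(H)^k)/i_* H_2(\partial W;Q(H)^k)$ is a quotient that I would show is zero by a diagram chase in the long exact sequences of $(W,M)$ and $(W,M')$ combined with Poincaré–Lefschetz duality (Theorem~\ref{thm:duality}), exactly as in the argument that a homology cobordism has trivial intersection form. Hence $\Lambda(W,\phi)=1$ and $\tau^{\alpha\otimes\psi}(M\cup -M')$ is a norm. Finally, torsion is multiplicative over disjoint union and $\tau^{\phi}(-M')$ differs from $\tau^{\Phi(\alpha)\otimes\Psi(\psi)}(M')$ only by the orientation reversal, which — since $M'$ is odd-dimensional and the torsion of an odd manifold is insensitive to orientation up to the stated indeterminacy (this uses Poincaré duality and the self-dual basis), or more simply by the duality Lemma~\ref{lem:torsion-duality} with $M$ and $M'$ swapped — contributes only a norm and a factor of the form $\pm dh$. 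Combining, $\tau^{\alpha\otimes\psi}(M) = \pm dh\,\tau^{\Phi(\alpha)\otimes\Psi(\psi)}(M')$ in $Q(H)^\times/N(Q(H))$ for some $d\in\det(\alpha(\pi_1(M)))$ and $h\in H$, with the identification $\det(\alpha(\pi_1(M)))=\det(\Phi(\alpha)(\pi_1(M')))$ following because both equal $\det$ of the image of the common finite quotient $G$.
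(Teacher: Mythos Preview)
Your overall architecture matches the paper's: build $\Psi$ via $H_1$, build $\Phi$ via Stallings, prove $H_*(W,M;Q(H)^k)=0$, deduce rank equality and $\Lambda(W,\phi)=1$, then apply Theorem~\ref{thm:torsion-duality}. But there is a genuine gap at the step you yourself flag as the technical heart.

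The arguments you offer for $H_*(W,M;Q(H)^k)=0$ do not work. Flatness of $Q(H)$ over $\Z[H]$ is irrelevant: the chain complex $C_*(\widetilde W,\widetilde M)$ lives over $\Z[\pi_1(W)]$, not over $\Z[H]$, and you are tensoring via the twisted map $\alpha\otimes\psi$, so you cannot reduce to flatness along $\Z[H]\to Q(H)$. Likewise ``$R[H]^k$ is free over $R[H]$'' and the Euler characteristic observation give nothing here. More importantly, you repeatedly write ``finite group'' where the hypothesis that matters is \emph{$p$-group}: for representations factoring through an arbitrary finite group the implication $H_*(W,M;\Z)=0 \Rightarrow H_*(W,M;Q(H)^k)=0$ is false in general. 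What the paper actually uses is a Cohn-localization property (Lemma~\ref{lem:cohn-local-prop}): if a square matrix over $\Z[\pi]$ augments to an invertible integer matrix, then its image under $\alpha\otimes\psi$ is invertible over $Q(H)$, \emph{provided} $\alpha$ factors through a $p$-group. This is a theorem of Strebel and Levine about $\Q[P]$ for $P$ a $p$-group, and it is exactly the mechanism that turns an integral chain contraction of $C_*(W,M)$ into a $Q(H)^k$-chain contraction (Lemma~\ref{lem:homology-cohn-local-coeff}). Without this ingredient your proof does not go through.

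A smaller point: your ``torsion is multiplicative over disjoint union'' step is not quite right when $M$ has toroidal boundary, since then $\partial W = M\cup_{\partial M} (-M')$ is glued, not disjoint. The paper handles this with the excision short exact sequence $0\to C_*(M';Q^k)\to C_*(\partial W;Q^k)\to C_*(M,\partial M;Q^k)\to 0$ together with Lemma~\ref{lem:torsion-duality} to pass from $\tau(M,\partial M)$ to $\overline{\tau(M)}$; you should do the same.
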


\begin{remark}
  In the proof of Theorem \ref{thm:homology-cob-inv} we will see that
  for a given homology cobordism $W$ between $M$ and $M'$, the
  maps $\Psi$ and $\Phi$ are both induced by $\pi_1(M)\to
  \pi_1(W)\leftarrow \pi_1(M')$, in particular $\Psi$ and $\Phi$ are
  `related' bijections.
\end{remark}

\subsection{Stallings Theorem and representations through $p$-groups}
\label{subsec:correspondence-of-representations}

We first construct the bijections in
Theorem~\ref{thm:homology-cob-inv} using Stallings'
theorem~\cite{St65}.

\begin{proof}[Proof of Theorem~\ref{thm:homology-cob-inv}, Part I]
  Suppose $W$ is a homology cobordism between $M$ and~$M'$.  Obviously
  the induced isomorphisms on $H_1(-;\Z)$ gives rise to bijections
  \[
  \Hom(\pi_1(M),H) \approx \Hom(\pi_1(W),H) \approx \Hom(\pi_1(M'),H).
  \]
  Their composition will be $\Psi$ in the above statement.  Since
  $\partial M = \partial M'$ it follows that $\psi\colon \pi_1(M) \to H$ is
  admissible if and only if so is $\Psi(\psi)$.

  The bijection $\Phi$ is constructed as follows.  For a group $G$, we
  denote the $q$th lower central subgroup by~$G_q$, which is defined
  inductively via $G_1=G$, $G_{q}=[G,G_{q-1}]$.  For any $\alpha\in
  P_k(R,\pi_1(M))$, $\alpha$ factors through $\pi_1(M)/\pi_1(M)_q$ for
  some $q$, since any $p$-group is nilpotent.  By Stallings'
  theorem~\cite{St65}, we have
  \[
  \pi_1(M)/\pi_1(M)_q \xrightarrow{\cong} \pi_1(W)/\pi_1(W)_q.
  \]
  Therefore $\alpha$ induces a representation $\pi_1(W) \to \gl(k,R)$
  which factors through $\pi_1(W)/\pi_1(W)_q$.  It is easily seen that
  this induces a bijection $P_k(R,\pi_1(M)) \xleftarrow{\approx}
  P_k(R,\pi_1(W))$.  Similarly for $M'$ and $W$, and composing them,
  we obtain a bijection $\Phi\colon P_k(R,\pi_1(M)) \to
  P_k(R,\pi_1(M'))$.
  \def\qedsymbol{}
\end{proof}

\subsection{Cohn local property and twisted coefficients}

In our proof of the conclusion on torsions in
Theorem~\ref{thm:homology-cob-inv} we will compute (the quotient of)
the torsions of $M$ and $M'$ in terms of the $Q(H)^k$-coefficient
intersection pairing of a cobordism $W$, appealing to
Theorem~\ref{thm:torsion-duality}.  The key point is that one can
prove that the intersection pairing is trivial when $W$ is a homology
cobordism.  This is done following a standard strategy, namely by
controlling the size of the underlying $Q(H)^k$-coefficient homology
of $(W,M)$, appealing to a chain contraction argument originally due
to Vogel.

In Lemma~\ref{lem:cohn-local-prop} below we denote by $\epsilon\colon
\Z[\pi] \to \Z$ the augmentation map given by $g\mapsto 1, g\in \pi$.

\begin{lemma}\label{lem:cohn-local-prop} \label{thm:levine}
  Suppose $\pi$ is a group and $R$ is a domain with characteristic
  zero, $\alpha\in P_k(R,\pi)$, and $\psi\colon \pi \to H$ is an
  epimorphism onto a nontrivial free abelian group~$H$.  If $A$ is a
  square matrix over $\Z[\pi]$ such that $\epsilon(A)$ is invertible
  over $\Z$, then $(\alpha\otimes\psi)(A)$ is invertible over $Q(H)$.
\end{lemma}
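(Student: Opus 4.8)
The plan is to reduce the statement to a known result about group rings and then handle the twisting by a free abelian group separately. First I would observe that since $\alpha$ factors through a $p$-group, the image group $G := \alpha(\pi)$ is a finite $p$-group, hence nilpotent; let $\Gamma := \psi(\pi) \cong H$ and let $\pi \to G \times \Gamma$ be the induced homomorphism (the combined representation $\alpha \otimes \psi$ factors through this). Applying $\alpha \otimes \psi$ to the matrix $A$ therefore factors through the image of $A$ in the group ring $\Z[G \times \Gamma]$, so it suffices to show: if $B$ is a square matrix over $\Z[G \times \Gamma]$ whose image under the augmentation $\Z[G\times\Gamma] \to \Z$ is invertible over $\Z$, then $B$ becomes invertible after passing to $\gl(k, Q(\Gamma))$ under the regular representation of $G$ tensored with the inclusion $\Z[\Gamma] \hookrightarrow Q(\Gamma)$.

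The key algebraic input is that $\Z[G]$, for $G$ a finite $p$-group, is a local ring after localizing at $p$ — more precisely, the augmentation ideal of $\F_p[G]$ is nilpotent, so $\F_p[G]$ is local with residue field $\F_p$, and hence any matrix over $\Z_{(p)}[G]$ whose augmentation is a unit mod $p$ is invertible over $\Z_{(p)}[G]$. I would instead work one step up: consider $\Z[\Gamma][G] = \Z[G\times\Gamma]$ and localize $\Z[\Gamma]$ at the prime ideal $(p)$, or better, pass to the completion/localization so that the coefficient ring becomes local with residue characteristic $p$. The clean way: the ring $R' := \Z[\Gamma]$ localized at the maximal ideal generated by $p$ and the augmentation ideal of $\Z[\Gamma]$ — no, simpler — map $\Z[\Gamma] \to \F_p$ by augmentation, giving $\Z[G\times\Gamma] \to \F_p[G]$, which is local. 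So $B$ is invertible over the localization $S^{-1}\Z[G\times\Gamma]$ where $S$ is the complement of the preimage of the maximal ideal of $\F_p[G]$. Then I would argue that this localization embeds into $\gl$ over $Q(\Gamma)$ via the regular representation, because $Q(\Gamma)[G]$ is semisimple (or at least the relevant localization injects), so invertibility is preserved.

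More carefully, the cleanest route avoiding case analysis on $R$: since $\rank_\Z$ statements only care about the field, extend scalars and use that $Q(\Gamma)[G]$ is a finite-dimensional $Q(\Gamma)$-algebra; the regular representation gives an embedding $Q(\Gamma)[G] \hookrightarrow \gl(k, Q(\Gamma))$ where $k = |G|$ (and a general $\alpha$ of dimension $k$ is a summand of a multiple of the regular one, which is fine since invertibility passes to summands and multiples). So it is enough to show $B$, viewed in $Q(\Gamma)[G]$, is invertible. Now $\Z[\Gamma]$ is a Noetherian domain and the augmentation ideal of $\Z[G]$ over $\Z[\Gamma]$ is nilpotent modulo $p$; I would localize $\Z[\Gamma]$ at $(p)$ to get a local ring $\Z[\Gamma]_{(p)}$ with residue field $\F_p(\Gamma) = $ fraction field of $\F_p[\Gamma]$, and then $\Z[\Gamma]_{(p)}[G]$ has the property that its quotient by the maximal ideal $\mathfrak m$ (generated by $p$ and the augmentation ideal of $G$) is a field, while $\mathfrak m$ is not nilpotent — so I cannot directly conclude locality. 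The fix is the standard one (this is exactly Lemma 4 of Cochran–Orr–Teichner / Levine's work): $\Z[\Gamma]_{(p)}[G]$ is the kind of ring over which a matrix with $\epsilon$ invertible is invertible because $\Z[\Gamma]_{(p)}$ injects into its completion $\Z_p[[\Gamma']]$-type ring where the argument runs. The hard part will be getting this localization/completion bookkeeping exactly right — specifically verifying that a matrix over $\Z[G\times\Gamma]$ with invertible augmentation over $\Z$ remains invertible after inverting the complement of the augmentation-preimage of $p\Z$, and that this localized ring embeds into $\gl(k,Q(H))$ — rather than in the finite-group representation theory, which is routine.

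Therefore the proof outline is: (i) reduce to showing $B$ over $\Z[G\times\Gamma]$ is invertible in $\gl(k,Q(\Gamma))$ via the regular representation, using that any $\alpha \in P_k(R,\pi)$ is (a summand of a power of) the regular representation and $R \subset \C$ so $Q(\Gamma)$ sees everything; (ii) localize $\Z[\Gamma]$ at $(p)$ and use that $\epsilon(B)$ invertible over $\Z$ implies $\epsilon(B)$ invertible over $\Z[\Gamma]_{(p)}$, and that $\Z[\Gamma]_{(p)}[G]$ has the property that such $B$ is invertible — because modulo the augmentation ideal of $G$ we land in the local ring $\Z[\Gamma]_{(p)}$, the augmentation ideal $I_G$ of $G$ is nilpotent after reducing mod $p$, and a determinant argument (or successive approximation: $B = \epsilon(B) + N$ with $N \in M_k(I)$ where $I$ is contained in the Jacobson radical of the relevant completion) shows $\det B$ is a unit; (iii) finally, $\Z[\Gamma]_{(p)}[G] \to Q(\Gamma)[G] \hookrightarrow \gl(k,Q(\Gamma))$ carries units to units, completing the proof.
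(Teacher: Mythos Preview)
Your approach can be made to work but is more involved than the paper's, which completely decouples the $p$-group part from the free-abelian part. The paper cites Strebel and Levine to conclude that the image of $A$ in $M_n(\Q[P])$ is invertible (where $P$ is the $p$-group through which $\alpha$ factors); hence $\det(\alpha(A))\ne 0$ in~$R$. Then one simply observes that the augmentation $R[H]\to R$ sends $\det((\alpha\otimes\psi)(A))$ to $\det(\alpha(A))\ne 0$, so $\det((\alpha\otimes\psi)(A))$ is nonzero in $R[H]$ and therefore a unit in the field~$Q(H)$. The free-abelian twist is thus handled by a one-line determinant argument, with no need to mix $G$ and~$\Gamma$. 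Your route instead proves invertibility directly in the local ring $\Z[\Gamma]_{(p)}[G]$; this does work (that ring is genuinely local with maximal ideal $(p)+I_G$, and the $G$-augmentation $\epsilon_G(B)$ is invertible over $\Z[\Gamma]_{(p)}$ since its determinant has $\Gamma$-augmentation $\pm 1$ and hence cannot lie in $p\Z[\Gamma]$), but several of your detours are unnecessary: you do not need any completion (the localization is already local), and you do not need the regular-representation/summand argument in step~(i) --- once $B$ is a unit in $\Z[\Gamma]_{(p)}[G]$, any ring homomorphism to $M_k(Q(H))$, in particular the one induced by $\alpha$, carries it to a unit. Also take care to distinguish the full augmentation $\epsilon$ from the partial $G$-augmentation $\epsilon_G$; your step~(ii) silently switches between them. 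What your approach buys is self-containment (you effectively reprove the Strebel--Levine input); what the paper's buys is brevity and a cleaner separation of concerns.
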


\begin{remark}
  Another way of phrasing Lemma~\ref{lem:cohn-local-prop} is to say
  that the $\Z[\pi]$-module $Q(H)^k$ is in fact a module over the
  \emph{Cohn localization} of~$\Z[\pi]$.
\end{remark}

Our proof of Lemma~\ref{lem:cohn-local-prop} depends heavily on a
result originally due to Strebel \cite{Str74} and Levine~\cite{Le94}.

\begin{proof}
  Since $R$ has characteristic zero and since the quotient ring of $R$
  gives the same $Q(H)$, we may assume that $R$ contains $\Q$.
  Suppose $\alpha$ factors through $f\colon \pi \to P$ where $P$ is a
  $p$-group.  We have the following commutative diagram:
  \[
  \begin{diagram}
    \node{\End_{\Z[\pi]}(\Z[\pi]^n)} \arrow{e,t}{\alpha\otimes\psi}
    \arrow{s,l}{f} \arrow{se,t}{\alpha}
    \node{\End_R(R[H]^{nk})} \arrow{e,t}{\det} \arrow{s,r}{\epsilon_R}
    \node{R[H]} \arrow{s,r}{\epsilon_R}
    \\
    \node{\End_{\Q[P]}(\Q[P]^n)} \arrow{e}
    \node{\End_R(R^{nk})} \arrow{e,t}{\det}
    \node{R}
  \end{diagram}
  \]
  where the induced maps are denoted by the same symbol, as an abuse of
  notation.  The augmentation $R[H] \to R$ is denoted by~$\epsilon_R$.

  Suppose $A\in \End_{\Z[\pi]}(\Z[\pi]^n)$ is an $n\times n$ matrix
  over $\Z[\pi]$ such that $\epsilon(A)$ is invertible.  Due to
  Strebel~\cite[Lemma~1.10]{Str74} and Levine \cite[Lemma~4.3]{Le94} this implies that  $f(A)$ is invertible
  over $\Q[P]$, i.e., a unit in $\End_{\Q[P]}(\Q[P]^n)$.  Since the
  maps in the above diagram are multiplicative, it follows that
  $\alpha(A)$ is invertible.  Therefore $\det(\alpha(A))\ne 0$ and
  $\det((\alpha\otimes\psi)(A))\ne 0$.  It follows that $A$ is
  injective over $R[H]$ and hence invertible over the field $Q(H)$.
\end{proof}

A standard well-known chain contraction argument originally due to
Vogel (see also Levine~\cite[Proposition~4.2]{Le94} and
Cochran-Orr-Teichner~\cite{COT03}) shows the following statement as a
consequence of Lemma~\ref{lem:cohn-local-prop}:

\begin{lemma}\label{lem:homology-cohn-local-coeff}
  Suppose $(X,A)$ is a finite CW-pair, $R$ is of characteristic zero,
  $\alpha\in P_k(R,\pi_1(X))$, and $\psi\colon \pi_1(X) \to H$ is an
  epimorphism onto a nontrivial free abelian group.  If
  $H_*(X,A;\Z)=0$, then $H_*(X,A;Q(H)^k)=0$.
\end{lemma}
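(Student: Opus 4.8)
The plan is to deduce Lemma~\ref{lem:homology-cohn-local-coeff} from Lemma~\ref{lem:cohn-local-prop} by a chain-level argument. First I would pass to the cellular chain complex $C_* = C_*(\widetilde X, \widetilde A)$ of the universal cover, which is a finite complex of based free left $\Z[\pi]$-modules, where $\pi = \pi_1(X)$. The hypothesis $H_*(X,A;\Z) = 0$ says exactly that $C_* \otimes_{\Z[\pi]} \Z$ (via the augmentation $\epsilon$) is acyclic. I would then invoke the standard fact (this is the ``chain contraction argument originally due to Vogel'', also in Levine~\cite[Proposition~4.2]{Le94}) that a finite complex of finitely generated free modules over $\Z[\pi]$ which becomes acyclic after applying $\epsilon$ is chain homotopy equivalent, over $\Z[\pi]$, to a complex of the form $\{\, \cdots \to 0 \to F \xrightarrow{\partial} F \to 0 \to \cdots \,\}$ concentrated in two adjacent degrees, with $F$ finitely generated free and $\epsilon(\partial)$ an isomorphism over $\Z$; equivalently, $\partial$ is represented by a square matrix $A$ over $\Z[\pi]$ with $\epsilon(A)$ invertible over $\Z$. (One produces this by repeatedly using that an $\epsilon$-acyclic complex has a partial chain contraction, which lets one cancel or fold cells; the key point is finiteness of the complex.)

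Next I would apply the functor $- \otimes_{\Z[\pi]} Q(H)^k$, where $Q(H)^k$ carries the $\Z[\pi]$-module structure coming from $\alpha \otimes \psi$, as in Section~\ref{section:twi3mfd}. Since this is an additive functor and chain homotopy equivalences are preserved by it, $C_*(X,A;Q(H)^k) = C_* \otimes_{\Z[\pi]} Q(H)^k$ is chain homotopy equivalent over $Q(H)$ to $\{\, F \otimes Q(H)^k \xrightarrow{(\alpha\otimes\psi)(\partial)} F \otimes Q(H)^k \,\}$, i.e.\ to the two-term complex given by the square matrix $(\alpha\otimes\psi)(A)$ over $Q(H)$. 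By Lemma~\ref{lem:cohn-local-prop}, applied with this very matrix $A$ (whose augmentation $\epsilon(A)$ is invertible over $\Z$ by construction), the matrix $(\alpha\otimes\psi)(A)$ is invertible over $Q(H)$. Hence that two-term complex is acyclic, and therefore so is $C_*(X,A;Q(H)^k)$, which is to say $H_*(X,A;Q(H)^k) = 0$. This also tacitly uses that $\alpha$ lies in $P_k(R,\pi_1(X))$ and $\psi$ is an epimorphism onto a nontrivial free abelian group, so that the hypotheses of Lemma~\ref{lem:cohn-local-prop} are met.

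The main obstacle, and the only nonroutine ingredient, is the reduction to a two-term complex: justifying that a finite free $\Z[\pi]$-complex which is $\epsilon$-acyclic is chain homotopy equivalent to $\{F \xrightarrow{A} F\}$ with $\epsilon(A)$ invertible. I would handle this either by citing Levine~\cite[Proposition~4.2]{Le94} and Cochran--Orr--Teichner~\cite{COT03} directly, or by giving the short inductive argument: choose a chain contraction $s$ of $C_* \otimes \Z$, lift the components $s_i$ to partial homomorphisms $\widetilde s_i$ over $\Z[\pi]$ (possible since the modules are free), and observe that $d + \widetilde s$ on $C_{\mathrm{even}} \to C_{\mathrm{odd}}$ (or its odd-to-even counterpart) is represented by a matrix whose augmentation is an isomorphism, after stabilizing by adding trivial $\{F' \xrightarrow{\id} F'\}$ summands to make the domains and targets match; one then checks this stabilized map is a chain homotopy equivalence from $C_*$ to the two-term complex. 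Everything else—functoriality of $-\otimes Q(H)^k$, preservation of chain homotopy equivalences, acyclicity of $\{F \xrightarrow{\text{iso}} F\}$—is formal.
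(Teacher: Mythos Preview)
Your overall strategy---lift an integral chain contraction $s$ to $\tilde s$ over $\Z[\pi]$, produce a square matrix over $\Z[\pi]$ with invertible augmentation, and invoke Lemma~\ref{lem:cohn-local-prop}---is precisely the Vogel--Levine--COT argument that the paper cites (the paper gives no further detail).  So in spirit you are doing exactly what the paper does.  There is, however, a genuine error in the way you package the argument.

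The assertion that an $\epsilon$-acyclic finite free $\Z[\pi]$-complex is \emph{chain homotopy equivalent over $\Z[\pi]$} to a two-term complex $\{F \xrightarrow{A} F\}$ is false in general.  Over $\Z[t^{\pm 1}]$, the complex
\[
\Z[t^{\pm 1}] \xrightarrow{\,2-t\,} \Z[t^{\pm 1}] \xrightarrow{\;0\;} \Z[t^{\pm 1}] \xrightarrow{\,2-t\,} \Z[t^{\pm 1}]
\]
(degrees $3,2,1,0$) is $\epsilon$-acyclic but has $H_0\cong H_2\cong \Z[t^{\pm 1}]/(2-t)\ne 0$; nonzero homology in two nonadjacent degrees rules out any chain homotopy equivalence to a length-two complex.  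The folded map $d+\tilde s\colon C_{\mathrm{odd}}\to C_{\mathrm{even}}$ is not a chain map, and there is no $\Z[\pi]$-level chain homotopy equivalence from $C_*$ to the two-term object you describe.

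The repair is to skip the two-term reduction.  With $\tilde s$ in hand, note that for each $i$ the endomorphism $d\tilde s+\tilde s d\colon C_i\to C_i$ is a square matrix over $\Z[\pi]$ whose augmentation is $\epsilon(d)\,s+s\,\epsilon(d)=\id$.  Lemma~\ref{lem:cohn-local-prop} then makes $d\gamma+\gamma d$ invertible over $Q(H)$, where $\gamma=(\alpha\otimes\psi)(\tilde s)$.  Since $d\gamma+\gamma d$ is a chain automorphism that induces the zero map on homology (on a cycle $z$ one has $(d\gamma+\gamma d)z=d(\gamma z)$), it follows that $H_*(X,A;Q(H)^k)=0$.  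Equivalently one may apply Lemma~\ref{lem:cohn-local-prop} to $d+\tilde s$ itself and then argue that invertibility of $d+\gamma$ forces acyclicity; either route avoids the nonexistent $\Z[\pi]$-level equivalence.
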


\begin{proof}[Proof of Theorem~\ref{thm:homology-cob-inv}, Part II]
  Suppose $W$ is a homology cobordism between $M$ and~$M'$.  Recall
  that we have constructed bijections
  \begin{gather*}
    P_k(R,\pi_1(M)) \approx P_k(R,\pi_1(W)) \approx P_k(R,\pi_1(M')),\\
    \Hom(\pi_1(M),H) \approx \Hom(\pi_1(W),H) \approx \Hom(\pi_1(M'),H).
  \end{gather*}
  Fix $\alpha \in P_k(R,\pi_1(M))$ and $\psi\colon \pi_1(M)\to H$, and
  as an abuse of notation, we denote by $\alpha$ and $\psi$
  the corresponding representations and homomorphisms of $\pi_1(M')$ and
  $\pi_1(W)$ as well.

  For convenience write $Q=Q(H)$.  Since $H_*(W,M)=0$, we have
  $H_*(W,M;Q^k)=0$ by Lemma~\ref{lem:homology-cohn-local-coeff}, and
  so $H_*(M;Q^k)\cong H_*(W;Q^k)$.  Similarly for $M'$ and $W$, and
  from this the conclusion on the ranks follows immediately.

  Now pick a self--dual basis $\BB$ for $H_*(M;Q^k)$ and pick a
  self--dual basis $\BB'$ for $H_*(M';Q^k)$. Recall that $H_*(\partial
  M;Q^k)=H_*(\partial M';Q^k)=0$ since $\psi$ is admissible (see
  Section \ref{section:twi3mfd}).  In particular $\BB$ gives rise to a
  basis for $H_*(M,\partial M;Q^k)$ which we also denote by $\BB$.
  Note that $\BB$ and $\BB'$ give rise to a self--dual basis for
  $H_*(\partial W;Q^k)$ which we denote by $\BB\oplus \BB'$.

  Since $H_*(W,M;Q^k)=0$, we have $\Lambda(W,\phi)=1$.  Therefore, by
  Theorem~\ref{thm:torsion-duality}, we have
  $\tau^{\alpha\otimes\psi}(\partial W)\doteq
  \tau^{\alpha\otimes\psi}(\partial W,\BB\oplus \BB')\doteq 1$.
  Applying Theorem~\ref{thm:alg-prop-torsion} to the excision short
  exact sequence
  \[
  0 \to C_*(M';Q^k) \to C_*(\partial W;Q^k) \to C_*(M,\partial M;Q^k)
  \to 0
  \]
  and then applying Lemma~\ref{lem:torsion-duality} we obtain
  \[
  \begin{aligned}
    1 = \tau^{\alpha\otimes\psi}(\partial W,\BB\oplus \BB') &
    =\tau^{\alpha\otimes\psi}(M',\BB')
    \cdot \tau^{\alpha\otimes\psi}(M,\partial M,\BB) \cdot \tau_0 \\
    &= \tau^{\alpha\otimes\psi}(M',\BB') \cdot
    \overline{\tau^{\alpha\otimes\psi}(M,\BB)} \cdot \tau_0,
  \end{aligned}
  \tag{$*$}\]
  where $\tau_0$ is the torsion of the homology long exact sequence
  \[
  \to H_i(M';Q^k) \to H_i(\partial W;Q^k) \to H_i(M,\partial
  M;Q^k) \to H_{i-1}(M';Q^k) \to
  \]
  which is based by $\BB', \BB'\oplus \BB$ and~$\BB$.
  It follows easily that $\tau_0=1$.  Therefore, multiplying
  $(*)$ by $\tau^{\alpha\otimes\psi}(M,\BB)$, the conclusion follows.
\end{proof}

%==================================================================
\section{Link concordance and twisted torsion}\label{section:lc}

It is well known that if two links are concordant, then their
exteriors are homology cobordant. More precisely, if $m$-component
links $L$ and $L'$ in $S^3$ are concordant, then the exterior $W$ of a
concordance has boundary $E_L \cup m(S^1\times S^1 \times[0,1]) \cup
-E_{L'}$ where we glue the $i$-th meridian (longitude) of $L$ to the
$i$-th meridian (longitude) of $L'$ (Recall that links are assumed to
be \emph{ordered} collections of circles).  Rounding corners and
extending collars outward, we can assume that $\partial W=E_L \cup
-E_{L'}$ and $\partial E_L=E_L\cap E_{L'}=\partial E_{L'}$.  Using
Alexander duality it is straightforward to verify that $W$ is indeed a
homology cobordism.

Therefore, we can apply the invariance of torsion
(Theorem~\ref{thm:homology-cob-inv}) immediately.  Furthermore, in
case of links, we can make further observations on the correspondence
between representations.  The main aim of this section is to provide a
precise description of this correspondence.

First, recall that given an oriented link $L\subset S^3$ with $m$
components, we can naturally identify $H_1(E_L)$ with $\Z^m$ in such a
way that the $i$th positive meridian $\mu_i$ of $L$ represents the
$i$th standard basis of~$\Z^m$.  Therefore $\Hom(\pi_1(E_L),H)$ is
identified with $\Hom(\Z^m,H)$, where $H$ denotes a nontrivial free
abelian group as in the previous sections.  Suppose two links $L$ and
$L'$ are concordant, in particular the concordance exterior $W$ is a
homology cobordism between $E_L$ and $E_{L'}$.  Recall that in
Theorem~\ref{thm:homology-cob-inv}, we defined a bijection $\Psi\colon
\Hom(\pi_1(E_L),H) \to \Hom(\pi_1(E_{L'}),H)$ using $H_1(E_L)\cong
H_1(W)\cong H_1(E_{L'})$.  Since the meridians of $L$ and $L'$ are
freely homotopic in $W$, it is easily seen that the bijection $\Psi$
induces the identity on $\Hom(\Z^m,H)$, that is, a homomorphisms
$\pi_1(E_L) \to H$ and its image under $\Psi$ are identified with the
same homomorphism $\Z^m\to H$.

Recall that a homomorphism $\psi\colon \Z^m \to H$ is
\emph{admissible} if $\psi$ is an epimorphism which is nontrivial on
each $\mu_i$.  In this case the corresponding map $\pi_1(E_L) \to H$,
which will also be denoted by $\phi$, is admissible in the sense of
Section~\ref{sec:twisted invariants}.  Thus the twisted torsion
$\tau^{\alpha\otimes\phi}(E_L)$ can be defined for any unitary representation
$\alpha\colon \pi_1(E_L) \to \gl(k,R)$.

The correspondence between representations of $\pi=\pi_1(-)$ for
concordant links is better described in terms of representations of
the lower central quotients~$\pi/\pi_q$.  It causes no loss of
generality, since any representation of $\pi$ factoring through a
$p$-group factors through $\pi/\pi_q$ for some $q$ since $p$--groups are nilpotent.  To proceed we will need some
technicalities discussed in the following subsection.

%===================================================================
\subsection{Concordance and lower central series}

In this subsection we state some known results and some folklore
results on link concordance and lower central series.
% , and Milnor's
% $\bar\mu$-invariants.

Let  $L\subset S^3$ be an ordered, oriented $m$--component link.
We denote the meridians by $\mu_1,\dots,\mu_m$ and by abuse of notation we will denote the 
corresponding elements in $\pi_1(E_L)$ by $\mu_1,\dots,\mu_m$ as well.
 We now denote by $F$ the free group on $m$ generators $x_1,\dots,x_m$.
Following ideas of Levine (see \cite[p.~101]{Le94}) we define an \emph{$F/F_q$-structure} for $L$ to be 
  a homomorphism $\varphi\colon \pi\to F/F_q$ such
    that for any $i=1,\dots,m$ the element $\varphi(\mu_i)$ is a
    conjugate of $x_i$.
(Here recall that given a group $\pi$ we denote by $\pi_q$ the $q$th lower central subgroup.) 
Furthermore, we refer to a link $L$
equipped with an $F/F_q$--structure as an
\emph{$F/F_q$--link}.

 A~\emph{special automorphism of $F/F_q$} is an
automorphism $h$ of $F/F_q$ such that $h(x_i)$ is a conjugate of $x_i$
for each~$i$.

\begin{lemma}\label{lemmapsi}
Let $L$ be an ordered oriented $m$-component link. We write $\pi=\pi_1(E_L)$.
\bn
\item Any $F/F_q$-structure $\varphi:\pi\to F/F_q$ induces an isomorphism $\pi/\pi_q\to F/F_q$.
\item   If $\varphi$ and $\varphi'$ are $F/F_q$--structures for $L$, then there exists a special automorphism $\Theta$ such that
  $\varphi'=\Theta\circ \varphi$.
\en
\end{lemma}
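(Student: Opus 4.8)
The plan is to handle part (1) via Stallings' theorem and part (2) by a direct argument about special automorphisms.

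For part (1), the key input is Stallings' theorem \cite{St65}: if a homomorphism of groups induces an isomorphism on $H_1$ and an epimorphism on $H_2$ (with, say, $\Z$-coefficients), then it induces isomorphisms $G/G_q \to G'/G_q$ for all $q$. Given an $F/F_q$-structure $\varphi\colon\pi\to F/F_q$, first I would observe that since each meridian $\mu_i$ maps to a conjugate of $x_i$, the composition $\pi \to F/F_q \to H_1(F/F_q) = \Z^m$ is exactly the standard abelianization $\pi \to H_1(E_L) = \Z^m$, which is an isomorphism on $H_1$. Since $H_1(E_L) \cong \Z^m$ is free abelian and $F/F_q$ is nilpotent, $\varphi$ factors through the nilpotent quotient $\pi/\pi_q$, giving $\bar\varphi\colon \pi/\pi_q \to F/F_q$ which is still an isomorphism on $H_1$. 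Now I would invoke the standard fact (a consequence of the Stallings exact sequence, or Stallings' theorem applied after noting $H_2(E_L)=0$ by Alexander duality since $E_L$ is homotopy equivalent to a $2$-complex, so the $H_2$-epimorphism condition is automatic) that a homomorphism between finitely generated groups inducing an isomorphism on $H_1$ and an epimorphism on $H_2$ induces isomorphisms on all lower central quotients; here $\bar\varphi$ composed with the abelianization of $\pi$ matches that of $F$, and since both $\pi/\pi_q$ and $F/F_q$ are finitely generated nilpotent with free abelianization $\Z^m$ and the map is a $\Z^m$-isomorphism on $H_1$, a downward induction on $q$ using the five lemma applied to the central extensions $1 \to F_{q-1}/F_q \to F/F_q \to F/F_{q-1} \to 1$ (and similarly for $\pi$) shows $\bar\varphi$ is an isomorphism, provided it is surjective---and surjectivity follows because $\varphi(\pi)$ is a subgroup of $F/F_q$ surjecting onto $H_1(F/F_q)$, hence equals $F/F_q$ by the standard lemma that a subgroup of a nilpotent group surjecting onto the abelianization is the whole group.

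For part (2), given two $F/F_q$-structures $\varphi, \varphi'$, both are isomorphisms $\pi/\pi_q \to F/F_q$ by part (1), so $\Theta := \varphi' \circ \varphi^{-1}$ is an automorphism of $F/F_q$. It remains to check $\Theta$ is special, i.e., $\Theta(x_i)$ is conjugate to $x_i$. But I would first need to see $x_i$ is in the image of $\varphi$ in a controlled way: since $\varphi(\mu_i)$ is a conjugate of $x_i$, say $\varphi(\mu_i) = g_i x_i g_i^{-1}$, and likewise $\varphi'(\mu_i) = g_i' x_i (g_i')^{-1}$, we get $\Theta(g_i x_i g_i^{-1}) = \varphi'(\varphi^{-1}(\varphi(\mu_i))) = \varphi'(\mu_i) = g_i' x_i (g_i')^{-1}$. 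Writing $\Theta(g_i) = h_i$ this gives $h_i \Theta(x_i) h_i^{-1} = g_i' x_i (g_i')^{-1}$, so $\Theta(x_i) = (h_i^{-1} g_i') x_i (h_i^{-1} g_i')^{-1}$ is indeed a conjugate of $x_i$. Hence $\Theta$ is a special automorphism and $\varphi' = \Theta \circ \varphi$, as desired.

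The main obstacle is part (1): one must be careful that Stallings' theorem applies, i.e., that the relevant $H_2$-condition holds. The cleanest route is to note $E_L$ has the homotopy type of a $2$-dimensional CW-complex so that any epimorphism on $\pi_1$ inducing an $H_1$-isomorphism automatically satisfies the $H_2$-epimorphism hypothesis once surjectivity is established; surjectivity of $\varphi$ onto $F/F_q$ is the subtle point and is exactly where one uses that meridians generate $H_1$ together with the nilpotence of $F/F_q$. Everything else is bookkeeping with central extensions and the five lemma.
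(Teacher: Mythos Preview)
Your argument for part (2) is correct and fleshes out what the paper states in one line. Your surjectivity argument for part (1) via the fact that a subgroup of a nilpotent group surjecting onto the abelianization is the whole group is also fine.

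The gap is in the injectivity half of part (1). Your parenthetical ``$H_2(E_L)=0$ by Alexander duality'' is false: Alexander duality gives $H_2(E_L)\cong \Z^{m-1}$, which is nonzero for $m\ge 2$. More to the point, Stallings' hypothesis concerns group homology $H_2(\pi)$, and for a non-split link $E_L$ is aspherical, so $H_2(\pi)=H_2(E_L)=\Z^{m-1}\ne 0$; there is no reason the map $H_2(\pi)\to H_2(F/F_q)$ should be onto. Your alternative five-lemma induction has the same problem one level down: to run it you would need to know that $\pi_{q-1}/\pi_q\to F_{q-1}/F_q$ is injective, and nothing you have written gives that.

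The paper avoids this by going through the free group. Let $\gamma\colon F\to\pi$ send $x_i\mapsto\mu_i$ and lift $\varphi\circ\gamma$ to a self-map $\psi\colon F\to F$. Since $H_2(F)=0$, Stallings applies to $\psi$ and shows that $\bar\varphi\circ\ol\gamma\colon F/F_q\to F/F_q$ is an isomorphism; in particular $\ol\gamma$ is injective. Milnor's theorem \cite[Theorem~4]{Mi57} supplies the other half, namely that $\ol\gamma\colon F/F_q\to\pi/\pi_q$ is surjective. Hence $\ol\gamma$ is an isomorphism, and therefore so is $\bar\varphi$. The moral is that Stallings should be applied to a map out of $F$, where the $H_2$ condition is free, rather than to a map out of $\pi$, where it genuinely fails.
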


\begin{proof}
We denote by  $\gamma\colon F \to \pi$ the map 
which sends  $x_i$ to $\mu_i$.
We also pick a map $\psi\colon F\to F$ such that the map $F\xrightarrow{\psi}F\to F/F_q$ agrees with $\varphi\circ \gamma$.
Finally note that $\gamma$ descends to a map $F/F_q\to \pi/\pi_q$ which we denote by $\ol{\gamma}$.
We now have the following commutative diagram, where the vertical maps are the obvious projection maps:
% \[
%  \begin{diagram}
%    \node{\End_{\Z[\pi]}(\Z[\pi]^n)} \arrow{e,t}{\alpha\otimes\psi}
%    \arrow{s,l}{f} \arrow{se,t}{\alpha}
%    \node{\End_R(R[H]^{nk})} \arrow{e,t}{\det} \arrow{s,r}{\epsilon_R}
%    \node{R[H]} \arrow{s,r}{\epsilon_R}
%    \\
%    \node{\End_{\Q[P]}(\Q[P]^n)} \arrow{e}
%    \node{\End_R(R^{nk})} \arrow{e,t}{\det}
%    \node{R}
%  \end{diagram}
%  \]
 \[
 \begin{diagram}
 \node{F}\arrow[2]{e,t}{\psi} \arrow{s}\arrow{se,t}{\gamma} \node{}\node{F}\arrow{s} \\
 \node{F/F_q}\arrow{e,t}{\ol{\gamma}} \node{\pi/\pi_q}\arrow{e,t}{\varphi}\node{F/F_q.}\end{diagram}
 \]
%\[
%\xymatrix{ F\ar[rr]^\psi\ar[d]\ar[dr]^\gamma && F\ar[d] \\ F/F_q\ar[r]^{\ol{\gamma}}& \pi/\pi_q\ar[r]^\varphi
%& F/F_q.}\]
It follows from Stallings' theorem~\cite{St65} applied to $\psi$ and from the commutativity of the diagram that the map $\varphi\circ \ol{\gamma}$ is an isomorphism. On the other hand it follows from Milnor's theorem \cite[Theorem~4]{Mi57}
that $\ol{\gamma}$ is surjective. We now conclude that $\varphi$ is an isomorphism. This concludes the proof of (1).

The second statement is an immediate consequence of (1). 
\end{proof}

Note that it follows from Lemma \ref{lemmapsi} (1) and \cite{Mi57} that a link admits an $F/F_q$-structure if and only if Milnor's $\bar\mu$-invariants of the form $\bar\mu(i_1\cdots
    i_{q-1})$ defined in \cite{Mi57} vanish for~$L$.

%==============================================
\subsection{Obstructions to links being concordant}

Now we are ready to derive the following theorem:

\begin{theorem}\label{thm:mainthm}\label{mainthm}
  Suppose two $m$--component ordered links $L$ and $L'$ are
  concordant, $\psi\colon\Z^m\to H$ is an admissible homomorphism, and
  $R$ is a subring of $\C$ closed under complex conjugation.  If
  $\varphi$ and $\varphi'$ are arbitrary $F/F_q$--structures for $L$
  and $L'$, respectively, then there exists a special automorphism
  $\Theta$ of $F/F_q$ such that for any $\alpha \in P_k(R,F/F_q)$ we
  have
  \[
  \rank(L,\psi,\alpha\circ \varphi)=\rank(L',\psi,\alpha \circ
  \Theta\circ \varphi')
  \]
  and
  \[
  \tau^{ (\alpha\circ \varphi)\otimes\psi}(L)=\pm dh \cdot
  \tau^{(\alpha \circ\Theta \circ\varphi')\otimes \psi}(L') \in
  Q(H)^\times/ N(Q(H))
  \]
  for some $d\in \det(\a(F/F_q))$ and $h\in H$.
\end{theorem}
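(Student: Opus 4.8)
The plan is to reduce this statement to the homology‐cobordism invariance result, Theorem~\ref{thm:homology-cob-inv}, together with the bookkeeping of $F/F_q$--structures provided by Lemma~\ref{lemmapsi}. Since $L$ and $L'$ are concordant, the exterior $W$ of a concordance is a homology cobordism between $E_L$ and $E_{L'}$, as recalled at the start of Section~\ref{section:lc}. Thus Theorem~\ref{thm:homology-cob-inv} furnishes bijections $\Psi\colon\Hom(\pi_1(E_L),H)\to\Hom(\pi_1(E_{L'}),H)$ and $\Phi\colon P_k(R,\pi_1(E_L))\to P_k(R,\pi_1(E_{L'}))$, both induced by $\pi_1(E_L)\to\pi_1(W)\leftarrow\pi_1(E_{L'})$, satisfying the rank equality and the torsion equality $\tau^{\beta\otimes\psi_0}(E_L)=\pm dh\,\tau^{\Phi(\beta)\otimes\Psi(\psi_0)}(E_{L'})$. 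As noted after Theorem~\ref{thm:homology-cob-inv} and at the beginning of this section, because the meridians of $L$ and $L'$ are freely homotopic in $W$, the bijection $\Psi$ is the identity on $\Hom(\Z^m,H)$; hence $\Psi(\psi)=\psi$ for the admissible $\psi$ in the statement, and admissibility is preserved.

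The next step is to translate the representation bijection $\Phi$ into a statement about $F/F_q$--structures. Fix $q$ large enough that the given $\alpha\in P_k(R,F/F_q)$ — composed with $\varphi$ or $\varphi'$ — is the representation we care about; enlarging $q$ is harmless since $p$--groups are nilpotent. By Lemma~\ref{lemmapsi}(1), both $\varphi\colon\pi_1(E_L)\to F/F_q$ and $\varphi'\colon\pi_1(E_{L'})\to F/F_q$ induce isomorphisms $\pi_1(E_L)/\pi_1(E_L)_q\xrightarrow{\cong}F/F_q$ and $\pi_1(E_{L'})/\pi_1(E_{L'})_q\xrightarrow{\cong}F/F_q$. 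On the other hand, the construction of $\Phi$ in Part~I of the proof of Theorem~\ref{thm:homology-cob-inv} shows that $\Phi$ is, at the level of the lower central quotients, induced by the Stallings isomorphism $\pi_1(E_L)/\pi_1(E_L)_q\xrightarrow{\cong}\pi_1(W)/\pi_1(W)_q\xleftarrow{\cong}\pi_1(E_{L'})/\pi_1(E_{L'})_q$. Composing, we obtain an isomorphism $\Theta_0\colon F/F_q\to F/F_q$, and the key observation is that $\Theta_0$ sends $x_i$ to a conjugate of $x_i$: indeed, $\varphi(\mu_i)$, $\varphi'(\mu_i')$ and the image of a meridian in $\pi_1(W)/\pi_1(W)_q$ are all conjugates of $x_i$ because meridians are freely homotopic in $W$ and $\varphi,\varphi'$ are $F/F_q$--structures. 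Hence $\Theta_0$ is a special automorphism of $F/F_q$; set $\Theta=\Theta_0$ (or, if one prefers to match the indeterminacy in Lemma~\ref{lemmapsi}(2) exactly, compose with the special automorphism relating $\Theta_0\circ\varphi'$ to the prescribed $\varphi'$). By construction $\Phi(\alpha\circ\varphi)=\alpha\circ\Theta\circ\varphi'$ for every $\alpha\in P_k(R,F/F_q)$.

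With $\Psi(\psi)=\psi$ and $\Phi(\alpha\circ\varphi)=\alpha\circ\Theta\circ\varphi'$ in hand, the rank equality and the torsion equality are now exactly the conclusions of Theorem~\ref{thm:homology-cob-inv} applied to $\beta=\alpha\circ\varphi$, and the indeterminacy $\pm dh$ with $d\in\det(\alpha(F/F_q))$, $h\in H$ is precisely the one recorded there (using $\det(\alpha\circ\varphi(\pi_1(E_L)))=\det(\alpha(F/F_q))$ since $\varphi$ is onto $F/F_q$). The main obstacle, and the only genuinely new point beyond invoking Theorem~\ref{thm:homology-cob-inv}, is verifying that the automorphism $\Theta_0$ coming from the three Stallings/$F/F_q$ isomorphisms really is \emph{special}, i.e.\ carefully checking the conjugacy condition on meridians through the free‐homotopy argument in $W$; once that is established the rest is a direct concatenation of the already‐proved statements. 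A subtlety to be careful about is the dependence on $q$: one must note that replacing $q$ by a larger $q''$ and then projecting $F/F_{q''}\to F/F_q$ intertwines the corresponding special automorphisms, so that $\Theta$ may be chosen compatibly for all sufficiently large $q$ (or simply fixed once $q$ is fixed in the statement).
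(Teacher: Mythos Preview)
Your proposal is correct and follows essentially the same route as the paper: reduce to Theorem~\ref{thm:homology-cob-inv} via the concordance exterior, use the Stallings isomorphisms on lower central quotients together with the $F/F_q$--structures to produce a special automorphism $\Theta$, and read off the rank and torsion equalities. The only cosmetic difference is that the paper first defines an auxiliary $F/F_q$--structure $\varphi''$ on $L'$ (pulled back through $W$ from $\varphi$) and then invokes Lemma~\ref{lemmapsi}(2) to obtain $\Theta$ with $\varphi''=\Theta\circ\varphi'$, whereas you build $\Theta$ directly as the composite of the three isomorphisms and verify the meridian--conjugacy condition by hand; these are the same construction. Your remarks about ``fixing $q$ large enough'' and compatibility under enlarging $q$ are unnecessary here, since $q$ is already fixed by the hypothesis that $\varphi,\varphi'$ are $F/F_q$--structures, but they do no harm.
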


\begin{proof}
 Let $W$ be the exterior of a concordance, $E=E_L$, and
  $E'=E_{L'}$.  Since $H_*(E) \cong H_*(W) \cong H_*(E')$ we can apply
  Stallings' theorem \cite{St65} to conclude that the inclusion maps
  induce isomorphisms
  \[
  \pi_1(E)/\pi_1(E)_q \xrightarrow{\cong} \pi_1(W)/\pi_1(W)_q
  \xleftarrow{\cong} \pi_1(E')/\pi_1(E')_q.
  \]
  Observe that the composition sends meridians of $L$ to (conjugates
  of) meridians of~$L'$.  It follows that the composition
  \[
  \pi_1(E') \to \pi_1(E')/\pi_1(E')_q \cong \pi_1(W)/\pi_1(W)_q \cong
  \pi_1(E)/\pi_1(E)_q \to F/F_q
  \]
  gives an $F/F_q$--structure on~$L'$ which we denote by $\varphi''$. 

  For $\alpha\in
  P_k(R,F/F_q)$, $\alpha\circ\varphi \in P_k(R,\pi_1(E))$ corresponds to
  $\alpha\circ\varphi'' \in P_k(R,\pi_1(E'))$ under the bijection
  $\Phi$ in Theorem~\ref{thm:homology-cob-inv}, by the definition of
  $F/F_q$--concordance and the definition of~$\Phi$ (see
  Subsection~\ref{subsec:correspondence-of-representations}).  Now by
  Theorem~\ref{thm:homology-cob-inv}, we have
  \begin{gather*}
    \rank(L,\psi,\alpha\circ \varphi)=\rank(L',\psi,\alpha\circ
    \varphi'')
    \\
    \tau^{ (\alpha \circ \varphi)\otimes\psi}(L)= \tau^{(\alpha\circ
      \varphi'')\otimes \psi}(L').
  \end{gather*}

  By Lemma~\ref{lemmapsi} there exists a special automorphism $\Theta$ such that
  $\varphi''=\Theta\circ \varphi'$.  The theorem now immediately follow   from these observations.
\end{proof}

The following corollary is equivalent to Theorem \ref{mainthmintro}
given in the introduction.

\begin{corollary}\label{cor:maincor}
  Suppose $L$ is an $m$--component slice link, $\psi\colon \Z^m \to H$
  is an admissible homomorphism, and $R$ is a subring of $\C$ closed
  under complex conjugation.  Then for any $\alpha \in P_k(R,\pi(L))$,
  we have $ \rank(L,\psi,\a)=k(m-1)$ and
  \[
  \tau^{\a\otimes \psi}(L)= \pm dh\cdot \prod_{i=1}^m
  \det(\id-\alpha(\mu_i)t_i)^{-1}
   \in
  Q(H)^\times/ N(Q(H))
  \]
  for some $d\in \det(\a(\pi(L)))$ and $h\in H$.
\end{corollary}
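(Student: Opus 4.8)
The plan is to derive Corollary~\ref{cor:maincor} from Theorem~\ref{mainthm} by taking $L'$ to be the trivial $m$--component link (the unlink), so that a slice link $L$ is concordant to $L'$. First I would observe that since $L$ is slice, $L$ and the unlink $L'$ are concordant, so Theorem~\ref{mainthm} applies: for any $F/F_q$--structures $\varphi$ for $L$ and $\varphi'$ for $L'$, there is a special automorphism $\Theta$ of $F/F_q$ with $\rank(L,\psi,\alpha\circ\varphi)=\rank(L',\psi,\alpha\circ\Theta\circ\varphi')$ and $\tau^{(\alpha\circ\varphi)\otimes\psi}(L)=\pm dh\cdot\tau^{(\alpha\circ\Theta\circ\varphi')\otimes\psi}(L')$ in $Q(H)^\times/N(Q(H))$. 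The key reduction is that any $\alpha\in P_k(R,\pi(L))$ factors through $\pi(L)/\pi(L)_q$ for some $q$ (since $p$--groups are nilpotent), and by Lemma~\ref{lemmapsi}(1) any $F/F_q$--structure $\varphi$ induces an isomorphism $\pi(L)/\pi(L)_q\cong F/F_q$; so $\alpha$ can be written as $\beta\circ\varphi$ for a representation $\beta\in P_k(R,F/F_q)$, and replacing $\beta$ by $\beta\circ\Theta^{-1}$ does not change the relevant sets or absorb into the indeterminacy $\pm d h$. Thus it suffices to compute both quantities for the unlink.

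Next I would compute $\rank(L',\psi,\gamma)$ and $\tau^{\gamma\otimes\psi}(L')$ for the unlink $L'$ and any unitary $\gamma$ of the type under consideration, which is exactly the content of Corollary~\ref{cor:unlink} cited in the introduction. The exterior of the $m$--component unlink is homotopy equivalent to a wedge of $m$ circles (more precisely, $E_{L'}$ deformation retracts onto $S^1\vee(\bigvee^{m-1}S^2)$; at the chain level one uses that $E_{L'}\simeq \natural^m (S^1\times D^2)$ has a handle decomposition with one $0$--handle, $m$ $1$--handles and $m-1$ $2$--handles coming from the connected-sum structure). From the resulting based chain complex over $\Z[\pi_1(E_{L'})]=\Z[F]$ one reads off the twisted chain complex $C_*(E_{L'};Q(H)^k)$ and computes directly that $H_1$ has $Q(H)$--dimension $k(m-1)$ — giving $\rank(L',\psi,\gamma)=k(m-1)$ — and that the torsion, after choosing a self--dual basis, equals $\pm d'h'\cdot\prod_{i=1}^m\det(\id-\psi(\mu_i)\gamma(\mu_i))^{-1}$. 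The factor $\det(\id-\psi(\mu_i)\gamma(\mu_i))^{-1}$ is precisely the contribution of the $i$th $1$--handle (the meridional $S^1$), and this is the classical Reidemeister-torsion computation for a circle twisted by $\gamma(\mu_i)\otimes\psi(\mu_i)$, which is where admissibility of $\psi$ (nontriviality on each $\mu_i$) is used to ensure $\id-\psi(\mu_i)\gamma(\mu_i)$ is invertible over $Q(H)$.

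Finally I would combine the two: substituting the unlink values into the conclusion of Theorem~\ref{mainthm} gives $\rank(L,\psi,\alpha)=k(m-1)$ and
\[
\tau^{\alpha\otimes\psi}(L)=\pm dh\cdot\prod_{i=1}^m\det\big(\id-\alpha(\mu_i)t_i\big)^{-1}\in Q(H)^\times/N(Q(H)),
\]
where $t_i=\psi(\mu_i)\in H$; I should note that $\det(\a(F/F_q))$ and $\det(\a(\pi(L)))$ agree under the identification $\pi(L)/\pi(L)_q\cong F/F_q$ and are absorbed into the allowed indeterminacy $\pm dh$, and that the special automorphism $\Theta$ permutes/conjugates the $x_i$ hence does not alter the product $\prod_i\det(\id-\alpha(\mu_i)t_i)$ up to the $\pm dh$ ambiguity (conjugation does not change the determinant, and $\Theta$ fixes each generator up to conjugacy while $\psi\circ\Theta=\psi$ on $H_1$ since $\Theta$ is special). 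The main obstacle I anticipate is the torsion computation for the unlink exterior: one must carefully track the based chain complex of the universal cover, choose a self--dual basis for $H_1(E_{L'};Q(H)^k)$ compatibly with Poincar\'e duality, and verify that the homology long exact sequence / basis-change contributions assemble into exactly the stated product with no stray factors beyond the $\pm dh$ indeterminacy — this bookkeeping, rather than any conceptual difficulty, is the delicate part, and it is presumably handled in the proof of Corollary~\ref{cor:unlink}.
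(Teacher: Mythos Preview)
Your proposal is correct and follows essentially the same route as the paper: reduce to the unlink via Theorem~\ref{mainthm}, factor $\alpha$ through $F/F_q$ using nilpotence of $p$--groups and Lemma~\ref{lemmapsi}, absorb the special automorphism $\Theta$ (which conjugates each $x_i$ and hence leaves each $\det(\id-\alpha(\mu_i)t_i)$ unchanged), and then invoke the explicit computation for the unlink. Two small remarks: first, you should make explicit that $L$ actually \emph{admits} an $F/F_q$--structure---this is not automatic and is exactly where sliceness enters a second time (it follows from Stallings' theorem applied to the concordance exterior, as in the proof of Theorem~\ref{mainthm}); second, the paper obtains the unlink calculation (Corollary~\ref{cor:unlink}) not by a direct CW computation of $E_{L'}$ as you sketch, but as the degenerate case $A=0$ of the boundary--link Seifert--matrix formula (Theorem~\ref{thm:compboundary}), which already packages the self--dual basis bookkeeping you anticipate as the delicate part.
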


\begin{proof}
  Write $\pi=\pi_1(E_L)$. Let $\a\in P_k(R,\pi(L))$.  By definition
  $\a$ factors through a $p$-group. Since $p$-groups are nilpotent it
  follows that there exists a $q$ such that $\a$ factors through
  $\pi/\pi_q$.  Since $L$ is slice, by the proof of
  Theorem~\ref{thm:mainthm} there exists an $F/F_q$--structure
  $\varphi\colon \pi\to \pi/\pi_q\xrightarrow{\cong} F/F_q$ on~$L$. In
  particular there exists $\alpha_0 \in P_k(R,F/F_q)$ such that
  $\alpha=\alpha_0\circ \varphi$.  Since $L$ is concordant to the
  trivial link, we may by Theorem \ref{mainthm} assume without loss of
  generality that $L$ is already the trivial link.  (Precisely
  speaking, this requires the change of $\alpha_0$ to $\alpha_0 \circ
  \Theta$ for some special automorphism $\Theta$ of $F/F_q$, but this
  can be ignored since
  $\det(\id-\alpha(\mu_i)t_i)=\det(\id-\alpha_0(x_i)t_i)$ is left
  unchanged when $x_i$ is replaced with its conjugate.)  The corollary
  now follows from the explicit calculation for the unlink, which will
  be done in Theorem~\ref{cor:unlink}.
\end{proof}

%===========================================================
\section{Representation theory of $p$--groups}\label{section:prep}

In this section we collect a few basic facts of the theory of representations factoring through $p$-groups. This summary will be useful in the later discussion of examples.

Given $k$ we denote by $P(k)\subset \gl(\C,k)$ the subgroup of
permutation matrices, i.e. matrices with exactly one non--trivial
entry in each row and in each column, and all the non--trivial entries
are equal to one.  Let $R\subset \C$ be a subring which is closed
under complex conjugation. We then write $D(R,k)\subset \gl(R,k)$ for
the subgroup of diagonal matrices.

\begin{proposition} \label{proppequalspd} \label{lem:ppowerrep} Let
  $\pi$ be a group with generators $g_1,\dots,g_n$.  Let $\a \colon
  \pi\to \gl(\C,k)$ be a representation.  We write $X_i=\a(g_i),
  i=1,\dots,n$. Let $p$ be a prime. Then $\a$ factors through a
  $p$--group if and only if there exists a matrix $Q\in \gl(\C,k)$
  such that for $i=1,\dots,n$ we can write $QX_iQ^{-1}=P_iD_i$ with
  $P_i\in P(k)$ and $ D_i\in D(\C,k)$ such that
  \begin{enumerate}
  \item $P_1,\dots,P_n$ generate a subgroup of $p$--power order, and
  \item the diagonal entries of $D_1,\dots,D_n$ are $p$--power roots
    of unity.
  \end{enumerate}
\end{proposition}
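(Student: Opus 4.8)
The plan is to prove both directions by unpacking what it means for a finite subgroup of $\gl(\C,k)$ to be a $p$-group. For the easy direction, suppose there is $Q$ with $QX_iQ^{-1}=P_iD_i$ satisfying (1) and (2). Then each $QX_iQ^{-1}$ lies in the group $G$ of monomial matrices whose permutation part lies in the subgroup generated by $P_1,\dots,P_n$ and whose nonzero entries are $p$-power roots of unity. This $G$ is a finite group: it sits in an extension $1\to A\to G\to S\to 1$ where $S=\langle P_1,\dots,P_n\rangle$ has $p$-power order by (1), and $A$ is a subgroup of the diagonal matrices with $p$-power-root-of-unity entries, hence a finite abelian $p$-group by (2). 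A finite group that is an extension of a $p$-group by a $p$-group is itself a $p$-group, so $\a$ (which is conjugate to a representation landing in $G$) factors through the $p$-group $\langle QX_1Q^{-1},\dots,QX_nQ^{-1}\rangle$.

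For the converse, suppose $\a$ factors through a $p$-group $P$; we may assume $\a\colon P\to\gl(\C,k)$ is a faithful representation of the finite $p$-group $P=\langle g_1,\dots,g_n\rangle$ (abusing notation for the images of the generators). The key classical fact I would invoke is that every irreducible complex representation of a $p$-group is induced from a one-dimensional representation of a subgroup (this holds for any supersolvable, in particular nilpotent, group — it is a standard consequence of the theory of monomial/$M$-groups, going back to Blichfeldt/Taketa). Decompose $\C^k$ into irreducibles; each irreducible summand, being induced from a linear character $\lambda$ of a subgroup $H\le P$, has a natural basis indexed by coset representatives of $H$ in $P$, in which every element of $P$ acts by a monomial matrix whose nonzero entries are values of $\lambda$, hence $|H|$-th — in particular $p$-power — roots of unity. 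Assembling these bases over all irreducible summands produces a single basis of $\C^k$, i.e., a matrix $Q\in\gl(\C,k)$, in which simultaneously $QX_iQ^{-1}=P_iD_i$ with $P_i$ a permutation matrix and $D_i$ diagonal with $p$-power-root-of-unity entries. Condition (2) is then immediate, and condition (1) holds because $P_i$ is the image of $X_i$ under the composition $\gl(\C,k)\to$ (monomial matrices) $\to$ (permutation matrices), a group homomorphism on the finite $p$-group $\langle X_1,\dots,X_n\rangle$, so $\langle P_1,\dots,P_n\rangle$ is a quotient of a $p$-group and hence of $p$-power order.

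The main obstacle is the converse direction, specifically getting a \emph{simultaneous} monomial form for all the generators at once rather than one at a time — this is exactly what the ``monomial representation'' structure of $p$-groups buys us, and the care needed is in choosing the coset-representative bases for the induced summands consistently so that the same $Q$ works for every $g_i$. I would also need to be mildly careful about the permutation part: a priori the nonzero entries of a monomial matrix arising this way need not be $1$, so ``permutation part'' must be read as the underlying permutation, and one checks that $X_i\mapsto P_i$ is multiplicative (it is, since it is the composite of the monomial-matrix inclusion with the quotient by the diagonal subgroup), which is what makes (1) come out. Everything else — the extension argument in the forward direction, the root-of-unity bookkeeping — is routine.
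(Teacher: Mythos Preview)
Your proposal is correct and follows essentially the same strategy as the paper: the converse direction in both rests on the classical fact that every complex irreducible representation of a $p$-group is monomial (induced from a linear character), and you are in fact slightly more careful than the paper in explicitly decomposing into irreducibles before invoking this. For the forward direction you argue via ``an extension of a $p$-group by a $p$-group is a $p$-group,'' whereas the paper instead verifies directly that every element of $\a(\pi)$ has $p$-power order by manipulating words in the $P_iD_i$; your extension argument is a clean alternative and has the mild advantage of making finiteness of $\a(\pi)$ immediate.
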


\begin{proof}
  We will say that a matrix $D$ is $p$--diagonal if $D$ is diagonal
  and if all the diagonal entries are $p$--power roots of unity.  We
  will several times make use of the following two basic observations:
  \begin{enumerate}
  \item[(i)] If $P,P'\in P(k)$ and $D,D'\in D(R,k)$ with $PD=P'D'$,
    then $P=P'$ and $D=D'$.
  \item[(ii)] Let $P,P'\in P(k)$ and $D,D'\in D(R,k)$, then
    $PDP'D'=PP'\cdot P'^{-1}DP'D'$ and $P'^{-1}DP'D'$ is a diagonal
    matrix, furthermore, if $D,D'$ are $p$--diagonal, then
    $P'^{-1}DP'D'$ is also $p$--diagonal.
  \end{enumerate}
  Now assume that we have a representation $\a \colon \pi\to
  \gl(\C,k)$ with the following property: there exists a matrix $Q\in
  \gl(\C,k)$ such that for $i=1,\dots,n$ we can write
  $Q\a(g_i)Q^{-1}=P_iD_i$ with $P_i\in P(k), D_i\in D(\C,k)$ which
  satisfy conditions (1) and~(2).

  We claim that $\a(\pi)$ is a $p$--group.  Without loss of generality
  we can assume that $Q$ is the identity. Let $Y$ be any element in
  the group $\a(\pi)$, we can write
  $Y=(P_{i_1}D_{i_1})^{\epsilon_1}\cdot \dots \cdot
  (P_{i_r}D_{i_r})^{\epsilon_r}$ for some $i_j\in \{1,\dots,n\},
  \epsilon_j\in \{\pm 1\}$.  We have to show that $Y$ has $p$--power
  order.  Let $X=P_{i_1}^{\epsilon_1}\cdot \dots \cdot
  P_{i_r}^{\epsilon_r}$.  By assumption $X^{p^s}=\id$ for some
  $s$. Using the above observations we can write
  \[
  Y^{p^s} =\big((P_{i_1}D_{i_1})^{\epsilon_1}\cdot \dots \cdot
  (P_{i_r}D_{i_r})^{\epsilon_r}\big)^{p^s} =X^{p^s} \cdot D
  \]
  for some $p$--diagonal matrix~$D$. It follows that $Y^{p^s}=D$ (and
  hence $Y$) has $p$--power order.  This shows that $\a(\pi)$ is a
  $p$--group.

  Now let $\a \colon \pi\to \gl(\C,k)$ be a representation which
  factors through a $p$--group $P$.  We write $X_i=\a(g_i),
  i=1,\dots,n$.  By \cite[Corollary~4.9]{We03} any representation
  $\a\colon P\to \gl(\C,k)$ of the $p$-group $P$ is induced from a
  representation of degree 1. This means that there exists a finite
  index subgroup $Q\subset P$ and a one--dimensional representation
  $Q\to \gl(\C,1)$ such that $\a$ is given by the natural $P$--left
  action on $\C[ P]\otimes_{\C[ Q]} \C$. Note that the
  one--dimensional representation is necessarily of $p$--power order.
  Now pick representatives $p_1,\dots,p_l$ for $P/Q$. These
  representatives defines a basis for $\C[ P]\otimes_{\C[ Q]} \C$, and
  writing $\a$ with respect to this basis we see that $\a$ is of the
  required type.
\end{proof}

\section{Boundary links}\label{section:boundarylinks}

Boundary links form an important subclass of links which has been
intensely studied over the years.  In this section we will show how to
calculate twisted invariants for boundary links and we will study the
twisted invariants of boundary links that are `boundary slice'.

\subsection{Invariants of boundary links}

\begin{definition}
  A \emph{boundary link} is an $m$--component oriented link in $S^3$
  which has $m$ disjoint Seifert surfaces, i.e. there exist $m$
  disjoint oriented surfaces $V_1,\dots,V_m \subset S^{3}$ such that
  $\partial(V_i)=L_i, i=1,\dots,m$. We refer to such a disjoint
  collection of Seifert surfaces as a \emph{boundary link Seifert
    surface}.
\end{definition}

\begin{remark}
  Not every link is a boundary link. For example it is clear that a
  boundary link has trivial linking numbers. Furthermore, Milnor's
  \cite{Mi57} $\bar{\mu}$--invariants are trivial for a boundary link,
  and in fact trivial for any link concordant to a boundary link.
  Cochran and Orr \cite{CO90,CO93} showed that there also exist links
  with vanishing $\bar{\mu}$--invariants which are not concordant to a
  boundary link (cf.\ also \cite{GL92} and \cite{Le94}).
\end{remark}

Throughout this section let $F$ be the free group on the generators
$x_1,\dots,x_m$. An \emph{$F$--link} is a pair $(L,\v)$ where $L$ is
an oriented $m$--component link in $S^{3}$ and $\v\colon \pi(L) \to F$
is an epimorphism sending an $i^{th}$ meridian to $x_i$. Note that
$\v$ gives rise to a map from $E_L$ to the wedge of $m$ circles, and
by a standard transversality argument such a map then gives rise to a
boundary link Seifert surface.  Conversely, the existence of a
boundary link Seifert surface $V$ for $L$ produces such an epimorphism
$\pi(L)\to F$ by the Thom-Pontryagin construction.

Given an oriented boundary link $L$ with disjoint oriented surfaces
$V_1,\dots,V_m \subset S^{3}$ we can define the `boundary link Seifert
matrix' as follows: Let $g_i$ be the genus of $V_i$. For $i=1,\dots,m$
pick a basis $v_{i,1},\dots,v_{i,2g_i}$ for $H_1(V_i)$. For $k,l\in
\{1,\dots,m\}$ we define $A_{kl}$ to be the $2g_k\times 2g_l$--matrix
given by $(A_{ij})_{kl}=\lk(v_{i,k},(v_{j,l})_+)$ where $(v_{j,l})_+$
denotes the positive push--off of $v_{j,l}$ from $V_j$ into $S^3\sm
V$.  We now view $(A_{ij})_{kl}$ as an $m\times m$--matrix of matrices
and we refer to it as a \emph{boundary link Seifert matrix}.  We refer
to \cite{Lia77} and \cite{Ko85,Ko87} for details and for more
information on boundary link Seifert matrices.

We now turn to the computation of twisted invariants of boundary
links.  Let $(L,\varphi)$ be an $F$-link and $A=(A_{ij})$ a
corresponding boundary link Seifert matrix where $A_{ij}$ is an
$r_i\times r_j$--matrix. Let $r=\sum_{i=1}^mr_i$ and
\[
T:=\diag(\underbrace{t_1,\dots,t_1}_{r_1},
\dots,\underbrace{t_m,\dots,t_m}_{r_m}).
\]
We view $A^t-AT$ as an $r\times r$--matrix over $R[t_1^{\pm
  1},\dots,t_m^{\pm 1}]$. Let $\psi\colon \Z^m \to H$ be an admissible
epimorphism to a nontrivial free abelian group $H$ and let $\a\colon
F\to \gl(R,k)$ be a representation.  Note that all entries of the
matrix $(\a\otimes \psi)(A^t-AT)$ are sums of monomials in one
variable, we can therefore unambiguously define the matrix $(\a\otimes
\psi)(A^t-AT)$ to be the $rk\times rk$--matrix over $R[H]$ given by
replacing each summand $at_i^j$ of an entry of $A^t-AT$ by the matrix
$a\a(x_i)^j\psi(t_i)\in \gl(R[H],k)$. (Here and throughout this
section we will identify the additive group $\Z^m$ with the
multiplicative group generated by $t_1,\dots,t_m$.)

\begin{theorem}\label{thm:compboundary}\label{thm:tauboundary}
  Let $(L,\varphi)$ be an $F$--link. Let $A$ be a boundary link
  Seifert matrix corresponding to Seifert surfaces given by $\varphi$.
  Let $\psi\colon \pi(L)\to H$ be an admissible epimorphism to a
  nontrivial free abelian group $H$.  Let $R\subset \C$ be a subring
  closed under complex conjugation, and let $\a\colon \pi(L)\to
  \gl(R,k)$ be a unitary representation which factors through the
  epimorphism~$\varphi$.  Then the following hold:
  \begin{enumerate}
  \item We have $\rank(L,\psi,\a)\geq k(m-1)$ and equality holds if
    and only if $\det((\a\otimes \psi)(A^t-AT))\ne 0$,
  \item If $\rank(L,\psi,\a)= k(m-1)$, then
    \[
    \tau^{\a\otimes \psi}(L)\doteq \det((\a\otimes \psi)(A^t-AT))\cdot
    \prod\limits_{i=1}^m \det(\id-\a(x_i)t_i)^{-1} \in
    \frac{Q(H)^\times}{N(Q(H))}.
    \]
  \item If $\a$ factors through a $p$--group, then $\rank(L,\psi,\a)=
    k(m-1)$.
  \end{enumerate}
\end{theorem}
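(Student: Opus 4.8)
The plan is to prove the three parts in order, with parts (1) and (2) following from a Mayer--Vietoris / handle-decomposition computation of the twisted chain complex of $E_L$ coming from the boundary link Seifert surface, and part (3) then being a quick consequence of part (1) together with Lemma~\ref{lem:cohn-local-prop}. First I would set up the geometric picture: given the disjoint Seifert surfaces $V_1,\dots,V_m$ with total genus pieces $r_i$, cutting $S^3$ along $V=V_1\cup\cdots\cup V_m$ exhibits $E_L$ as built from $E_L \sm \nu V$ (a space homotopy equivalent to a wedge-type piece with free fundamental group image under $\varphi$) together with the product neighborhoods $V_i\times[-1,1]$, glued along $V_i\times\{\pm 1\}$. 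Passing to the cover determined by $\alpha\otimes\psi$ and using that $\alpha$ factors through $\varphi\colon\pi(L)\to F$, the relevant chain complex is, up to simple homotopy equivalence, concentrated in a two-term form whose only interesting differential is given by the matrix $(\alpha\otimes\psi)(A^t-AT)$ acting on $R[H]^{rk}$, exactly as in the classical untwisted computation (cf.\ the references to Kawauchi, Ko, and Levine cited in the section) but carried out with twisted coefficients. The meridional pieces contribute the factors $\det(\id - \alpha(x_i)t_i)$: each meridian $\mu_i$ maps to (a conjugate of) $x_i$ and the corresponding $S^1$-factor contributes a $2$-term complex with differential $\id - (\alpha\otimes\psi)(\mu_i) = \id - \alpha(x_i)t_i$ up to the indeterminacy of the torsion.

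With this chain complex in hand, part (1) is the statement that $H_1^{\alpha\otimes\psi}(E_L;Q(H)^k)$ has dimension $k(m-1) + \dim_{Q(H)}\ker\big((\alpha\otimes\psi)(A^t-AT)\big)$ over $Q(H)$: the $k(m-1)$ comes from the $H_0$/wedge contribution (exactly as in the unlink case of Corollary~\ref{cor:unlink}, where $\rank = k(m-1)$), and the extra term is the kernel of the Seifert matrix differential, which vanishes precisely when $\det((\alpha\otimes\psi)(A^t-AT))\ne 0$. I would be careful here to check $Q(H)^k$-acyclicity of the boundary torus pieces (guaranteed since $\psi$ is admissible, as recalled in Section~\ref{section:twi3mfd}) so that the relative and absolute versions agree and Poincar\'e duality applies. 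Part (2) then follows by computing the torsion of this explicit complex: multiplicativity of torsion under the short exact sequence coming from the decomposition (Theorem~\ref{thm:alg-prop-torsion}(4)) splits $\tau^{\alpha\otimes\psi}(E_L)$ as the product of the torsion of the Seifert-matrix complex, which is $\det((\alpha\otimes\psi)(A^t-AT))$ when that determinant is nonzero, times the contributions $\det(\id-\alpha(x_i)t_i)^{-1}$ from the meridional/wedge part. The exponents of the two factors (and the sign/$\det(\alpha(\pi))\cdot H$ indeterminacy absorbed into $\doteq$) have to be matched against the conventions fixed in Section~\ref{section:rt}; this bookkeeping, together with verifying that the choice of self-dual basis does not affect the answer in $Q(H)^\times/N(Q(H))$, is the step I expect to require the most care.

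Part (3) is then immediate: the matrix $A^t - AT$ specializes, under the augmentation $t_i\mapsto 1$, to $A^t - A$, which is unimodular over $\Z$ (it is the matrix of a skew-symmetric unimodular intersection-type pairing arising from the Seifert surfaces — or more simply $\epsilon(A^t-AT) = A^t - A$ has determinant $\pm 1$ by the standard boundary-link Seifert matrix identity $\det(A^t-A)=\pm 1$). Hence by Lemma~\ref{lem:cohn-local-prop}, applied to $\alpha\in P_k(R,F)$ (pulled back to $\pi(L)$ via $\varphi$) and the admissible $\psi$, the matrix $(\alpha\otimes\psi)(A^t-AT)$ is invertible over $Q(H)$; in particular its determinant is nonzero, so the equality case in part (1) holds and $\rank(L,\psi,\alpha) = k(m-1)$. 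The main obstacle in the whole argument is not any single deep input but the careful identification of the twisted chain complex with the asserted Seifert-matrix form — keeping track of conjugating the meridians $\mu_i$ to the $x_i$, of the gluing maps of the $V_i\times[-1,1]$ pieces, and of base points — so that all indeterminacies genuinely collapse in $Q(H)^\times/N(Q(H))$; once that is pinned down, (1), (2), (3) drop out as above.
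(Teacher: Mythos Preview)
Your proposal is correct and follows essentially the same strategy as the paper: Mayer--Vietoris from cutting along the boundary link Seifert surface $V=V_1\cup\dots\cup V_m$ (with the key observation that $\alpha$ and $\psi$ are trivial on $\pi_1(V_i)$ and on $\pi_1$ of the complement $C=S^3\sm\nu V$, since $\alpha$ factors through~$\varphi$), identifying the relevant $H_1\to H_1$ map with $(\alpha\otimes\psi)(A^t-AT)$, and then reading off (1) from a dimension count, (2) from multiplicativity of torsion (Theorem~\ref{thm:alg-prop-torsion}(4)), and (3) from $\det(A^t-A)=\pm 1$ together with Lemma~\ref{lem:cohn-local-prop}. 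The one place where the paper is more explicit than your sketch is in part~(2): rather than appealing abstractly to basis-independence in $Q(H)^\times/N(Q(H))$, it writes down a concrete self-dual basis for $H_1(E_L;Q(H)^k)$ and $H_2(E_L;Q(H)^k)$ (built from lifts of meridians and from the closed surfaces $S_i$ obtained by doubling the~$V_i$), and then computes the torsion of the resulting three-term complex directly to extract the factor $\prod_i\det(\id-\alpha(x_i)t_i)^{-1}$; this is exactly the careful bookkeeping you anticipated.
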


\begin{proof}
  Let $V_1\cup \dots \cup V_m\subset E_L$ be a properly embedded
  boundary link Seifert surface for $L$ corresponding to $\varphi$
  together with bases $v_{i,1},\dots,v_{i,2g_i}$ for $H_1(V_i)$ such
  that $(A_{ij})$ is the corresponding Seifert matrix, i.e.
  $(A_{ij})_{kl}=\lk(v_{i,k},(v_{j,l})_+)$.  Let $C=S^3\sm (\nu
  V_1\cup\dots \cup \nu V_m)$. Note that the induced maps
  $\pi_1(V_i)\to F, i=1,\dots,m$ and $\pi_1(C)\to F$ are trivial, in
  particular $\a$ and $\psi$ restricted to $\pi_1(V_i), i=1,\dots,m$
  and $\pi_1(C)$ are trivial.  We therefore get the following exact
  Mayer--Vietoris sequence:
%  \[ \ba{ccccccccccccc} &\bigoplus\limits_{i=1}^m H_1(V_i)\otimes_\Z
%  R[H]^k&\xrightarrow{}&
%  H_1(C)\otimes_\Z R[H]^k&\to& H_1(E_L;R[H]^k)&\to&\\
%  \to &\bigoplus\limits_{i=1}^m H_0(V_i)\otimes_\Z
%  R[H]^k&\xrightarrow{}& H_0(C)\otimes_\Z R[H]^k&\to&
%  H_0(E_L;R[H]^k)&\to&0.\ea
%  \]
  \begin{multline*}
    \bigoplus\limits_{i=1}^m H_1(V_i)\otimes_\Z R[H]^k \to
    H_1(C)\otimes_\Z R[H]^k \to H_1(E_L;R[H]^k) \\
    \to \bigoplus\limits_{i=1}^m H_0(V_i)\otimes_\Z R[H]^k \to
    H_0(C)\otimes_\Z R[H]^k \to H_0(E_L;R[H]^k) \to 0
  \end{multline*}
  (We refer to \cite[Section~3]{FK06} for details.)  Now let
  $c_{i,1},\dots,c_{i,2g_i}, i=1,\dots,m$ be the basis for $H_1(C)$
  dual to the basis $v_{i,1},\dots,v_{i,2g_i}, i=1,\dots,m$,
  i.e. $\lk(c_{i,j},v_{k,l})=\delta_{ik}\cdot \delta_{jl}$.  Using
  these bases it is well--known that the map
  \[
  H_1(V_i)\otimes_\Z R[H]^k\xrightarrow{}
  H_1(C)\otimes_\Z R[H]^k
  \]
  is represented by $(\a\otimes \psi)(A^t-AT)$.  We write
  $n=\sum_{i=1}^m 2g_ik$.  Note that $(\a\otimes \psi)(A^t-AT)$ is an
  $n\times n$-matrix.  We now have
%  \[ \ba{rcl}&&\rank(L,\psi,\a)\\
%  &=&\dim_{Q(H)} H_1(E_L;Q(H)^k)\\
%  &=&\dim \left(\bigoplus\limits_{i=1}^m H_0(V_i)\otimes_\Z Q(H)^k\right)-\dim\%left( H_0(C)\otimes_\Z Q(H)^k\right)+\dim \left(H_0(E_L;Q(H)^k\right)\\
%  &&+\dim_{Q(H)}(H_1(C)\otimes Q(H)^k)-\mbox{rank}((\a\otimes \psi)(A^t-AT))\\
%  &=&km-k+0+n-\mbox{rank}((\a\otimes \psi)(A^t-AT)).\ea
%  \]
  \begin{align*}
    \rank(L,\psi,\a) &= \dim_{Q(H)} H_1(E_L;Q(H)^k)\\
    &=\dim \left(\bigoplus\limits_{i=1}^m H_0(V_i)\otimes_\Z Q(H)^k\right)-\dim\left( H_0(C)\otimes_\Z Q(H)^k\right)\\
    &\qquad+\dim \left(H_0(E_L;Q(H)^k\right)+\dim_{Q(H)}(H_1(C)\otimes Q(H)^k)\\
    &\qquad-\mbox{rank}((\a\otimes \psi)(A^t-AT))\\
    &=km-k+0+n-\mbox{rank}((\a\otimes \psi)(A^t-AT)).
  \end{align*}
  In particular $\rank(L,\psi,\a)\geq km-k$, and equality holds if and
  only if the rank of the $n\times n$-matrix $(\a\otimes
  \psi)(A^t-AT)$ is equal to $n$.  This proves (1).

  We now turn to the calculation of $\tau^{\a\otimes \psi}(L)$.
  Suppose that $\rank(L,\psi,\a)= k(m-1)$.  For $i=1,\dots,m$ we pick
  a point $u_i\in V_i$.  Note that $\{u_i,
  \{v_{i,1},\dots,v_{i,{2g_i}}\}\}$ defines a basis for the free
  $\Z$--module $H_*(V_i)$ and
  \[
  \tau(V_i;\{u_i, \{v^i_1,\dots,v^i_{2g_i}\}\})=\pm 1 \in \Z
  \]
  since $\pm 1$ are the only units in the ring $\Z$.  We now denote by
  $e_1,\dots,e_k$ the standard basis of $Q^k$. For $i=1,\dots,m$ we
  then let
  \begin{align*}
    \VV_{i0}&=\{u_i\otimes e_j\}_{j=1,\dots,k},\\
    \VV_{i1}&=\{v_{i,j}\otimes e_l\}_{j=1,\dots,2g_i, l=1,\dots,k}.
  \end{align*}
  Note that for $i=1,\dots,m$ the set $\{\VV_{i0},\VV_{i1}\}$ defines
  a basis for the free $Q(H)$--module $H_*(V_i)\otimes_{\Z}Q(H)^k$ and
  \[
  \tau(V_i;\{\VV_{i0},\VV_{i1}\})=\tau(V_i;\{u_i,
  \{v_{i,1},\dots,v_{i,{2g_i}}\}\})^k=\pm 1 \in \Z.
  \]
  We now also pick a point $b\in C$ and for $i=1,\dots,m-1$ we denote
  by $S_i$ the result of gluing $V_i\times -1$ and $V_i\times 1$
  together along $\partial V_i\times [-1,1]$.  Note that
  $S_1,\dots,S_{m-1}$ are a basis for $H_2(C)$.  We now let
  \begin{align*}
    \CC_{0}&=\{p\otimes e_j\}_{j=1,\dots,k},\\
    \CC_{1}&=\{c_{i,j}\otimes e_l\}_{i=1,\dots,m, j=1,\dots,2g_i, l=1,\dots,k},\\
    \CC_2&=\{S_i\otimes e_j\}_{i=1,\dots,m-1, j=1,\dots,k}.
  \end{align*}
  Note that the set $\{\CC_{0},\CC_{1},\CC_2\}$ defines a basis for
  the free $Q(H)$--module $H_*(C)\otimes_{\Z}Q(H)^k$ and a similar
  argument as above shows that
  \[
  \tau(C;\{\CC_{0},\CC_{1},\CC_2\})=\pm 1 \in \Z.
  \]
  Now note that $\rank(L,\psi,\a)=k(m-1)$ implies by (1) that the map
  \[
  \bigoplus\limits_{i=1}^m H_1(V_i)\otimes_\Z Q(H)^k\to
  H_1(C)\otimes_\Z Q(H)^k
  \]
  is injective. This implies that the map
  \[
  H_2(C)\otimes Q(H)^k\to H_2(E_L;Q(H)^k)
  \]
  is an isomorphism, and we denote by $\BB_2$ be the image of $\CC_2$
  under this isomorphism.  By a slight abuse of notation we also write
  $\CC_2:=\{S_i\otimes e_j\}_{i=1,\dots,m-1, j=1,\dots,k}$.

  For $i=1,\dots,m$ we now write $P_i=\id-\a(x_i)t_i$.  Furthermore
  for $i=1,\dots,m$ we let $\mu_i$ be a meridian of $L_i$ which is
  based at $b$ and which intersects $V_i$ geometrically once in the
  positive direction and which is disjoint from $V_j, j\ne i$.

  We now denote by $p\colon \widetilde{E_L}\to E_L$ the $F$--cover
  corresponding to~$\varphi$.  We pick a point $\ti{b}$ over $b$ and
  for $i=1,\dots,m$ we denote by $\ti{\mu}_i$ the component of
  $p^{-1}(\mu_i)$ such that $\partial \ti{\mu}_i=\ti{b}-x_i\cdot
  \ti{b}$.

  \begin{claim}
    For $i\in \{1,\dots, m-1\}$ and $j\in \{1,\dots,k\}$ we define
    \[
    b_{ij}':= \ti{\mu}_i\otimes P_i^{-1}e_j - \ti{\mu}_{i+1}\otimes
    P_{i+1}^{-1}e_j.
    \]
    Then the following hold:
    \begin{enumerate}
    \item $\{b_{ij}'\}$ is a basis for $H_1(E_L;Q(H)^k)$,
    \item for any $i,j,k$ and $l$ we have
      \[
      b_{ij}'\cdot ( S_{r}\otimes e_s)=
      \begin{cases}
        \delta_{js},& \mbox{if }r=i, \\
        -\delta_{js},&\mbox{if }r=i+1.
      \end{cases}
      \]
    \item for any $i$ and $j$ we have
      \[
      r(b_{ij}')=u_i\otimes P_i^{-1}e_j -u_{i+1}\otimes P_{i+1}^{-1}e_j.
      \]
    \end{enumerate}
  \end{claim}

  In order to prove the claim, first note that
  \begin{align*}
    \partial (\ti{\mu}_i\otimes e_j)&=(1-x_i)\ti{b}\otimes e_j\\
    &=\ti{b}\otimes (\a\otimes \psi)(1-x_i)e_j\\
    &= \ti{b} \otimes  (\id-\a(x_i)t_i)e_j=\ti{b}\otimes P_ie_j.
  \end{align*}
  It therefore follows that
  \[
  \partial(b_{ij}')=\ti{b}\otimes P_iP_{i}^{-1}e_j-\ti{b}\otimes
  P_{i+1}P_{i+1}^{-1}e_j=0.
  \]
  The calculation of $b_{ij}'\cdot ( S_{r}\otimes e_s)$ follows easily
  from the definitions.  This shows in particular that the $b_{ij}'$
  are linearly independent and hence form a basis for
  $H_1(E_L;Q(H)^k)$ Finally it is straightforward to verify that
  \[
  r(b_{ij}')=u_i\otimes P_i^{-1}e_j -u_{i+1}\otimes P_{i+1}^{-1}e_j.
  \]
  This concludes the proof of the claim.

  For $i=1,\dots,m-1$ and $j=1,\dots,k$ we now let
  \[
  b_{ij}=\sum_{r=i}^{m-1} b_{rj}'.
  \]
  We write $\CC_1:=\{b_{ij}\}$.  It follows immediately from the above
  claim that $\CC_1$ is a basis, and that $\CC_1$ is dual to $\CC_2$.
  We now consider the following short exact sequence of
  $Q(H)$--complexes:
  \[
  0\to \bigoplus\limits_{i=1}^m C_*(V_i;Q(H)^k)\to C_*(C;Q(H)^k)\to
  C_*(E_L;Q(H)^k)\to 0
  \]
  together with the above bases. It follows from Theorem
  \ref{thm:mi66} (4) together with the above calculations and from the
  definition of torsion that
  \[
  \tau^{\a\otimes \psi}(L,\{\BB_1,\BB_2\})= \det((\a\otimes
  \psi)(A^t-AT))\cdot \tau(C),
  \]
  where $C$ is the following complex with the canonical bases:
%\[ 0\to Q(H)^{k(m-1)} \xrightarrow{\bp P_1^{-1} &0&\dots &0    \\ -P_2^{-1}&P_2%^{-1} &\dots&0 \\
%0&\ddots&\ddots &\vdots \\
%%0&\dots  & -P_{m-1}^{-1} &P_{m-2}^{-1}&0 \\
%0&\dots 0 & -P_{m-1}^{-1} &P_{m-1}^{-1} \\
% 0&\dots  & 0 &-P_{m}^{-1}\ep} Q(H)^{km}\xrightarrow{\bp P_1 &\dots &P_m\ep}Q(H%)^k\to 0.\]
%
  \begin{gather*}
    0\to Q(H)^{k(m-1)} \xrightarrow{f} Q(H)^{km}\xrightarrow{g}Q(H)^k\to 0
    \\
    \begin{aligned}
      f&=\bp P_1^{-1} &0&\dots &0    \\ -P_2^{-1}&P_2^{-1} &\dots&0 \\
      0&\ddots&\ddots &\vdots \\
      % 0&\dots  & -P_{m-1}^{-1} &P_{m-2}^{-1}&0 \\
      0&\dots & -P_{m-1}^{-1} &P_{m-1}^{-1} \\
      0&\dots  & 0 &-P_{m}^{-1}\ep
      \\
      g&=\bp P_1 &\dots &P_m\ep
    \end{aligned}
  \end{gather*}
  It now follows (cf.\ e.g.\ \cite[Theorem~2.2]{Tu01}) that
  \[
  \tau(C)=\prod\limits_{i=1}^m \det(P_i)^{-1}.
  \]
  This concludes the proof of the second statement of the theorem.

  Finally note that $\det(A-A^t)=\pm 1$ (see \cite{Ko87}), in
  particular if $\a$ factors through a $p$--group, then it follows
  immediately from Theorem \ref{thm:levine} that $\det((\a\otimes
  \psi)(A^t-AT))\ne 0$.  This concludes the proof of part (3) of the
  theorem.
\end{proof}

The following corollary follows immediately from taking $A$ to be the
trivial matrix.

\begin{corollary}\label{cor:unlink}
  Let $L\subset S^3$ be the $m$--component unlink in $S^3$ with
  meridians $\mu_1,\dots,\mu_m$.  Let $\psi\colon \Z^m\to H$ be an
  admissible homomorphism.  Let $R\subset \C$ be a subring closed
  under complex conjugation, and let $\a\colon \pi(L)\to \gl(R,k)$ be
  a unitary representation. Then
  \[
  \rank(L,\psi,\a)=k(m-1)
  \]
  and
  \[
  \tau^{\a\otimes \psi}(L)=\pm dh \cdot \prod\limits_{i=1}^m
  \det\big(\id-\psi(\mu_i)\a(\mu_i)\big)^{-1} \, \in
  Q(H)^\times/N(Q(H))
  \]
  with $d\in \det(\a(\pi(L)))$ and $ h\in H$.
\end{corollary}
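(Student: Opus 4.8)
The plan is to specialize Theorem~\ref{thm:tauboundary} to the degenerate case in which the boundary link Seifert matrix $A$ is the empty (or zero-sized) matrix, so that each $r_i=0$, $r=0$, and $n=\sum 2g_ik=0$. Indeed, the $m$--component unlink bounds $m$ disjoint disks, which form a boundary link Seifert surface consisting of genus-zero surfaces; the induced $F$--structure $\v\colon \pi(L)\to F$ is the tautological one sending the $i$th meridian to $x_i$ (this is an epimorphism since the unlink complement has free fundamental group $F$), and the representation $\a\colon \pi(L)\to \gl(R,k)$ trivially factors through $\v$ because $\pi(L)=F$.

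First I would observe that with all $g_i=0$ the matrix $A^t-AT$ is the $0\times 0$ matrix, whose determinant is by convention $1$, hence in particular nonzero. By Theorem~\ref{thm:tauboundary}(1) this gives $\rank(L,\psi,\a)=k(m-1)$ (equality holds precisely because $\det((\a\otimes\psi)(A^t-AT))=1\ne 0$). Then I would invoke Theorem~\ref{thm:tauboundary}(2), which under the rank equality yields
\[
\tau^{\a\otimes\psi}(L)\doteq \det\big((\a\otimes\psi)(A^t-AT)\big)\cdot \prod_{i=1}^m \det(\id-\a(x_i)t_i)^{-1}
= \prod_{i=1}^m \det(\id-\a(x_i)t_i)^{-1}
\]
in $Q(H)^\times/N(Q(H))$. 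Finally I would translate notation: under the identification $t_i=\psi(\mu_i)\in H$ and $\a(x_i)=\a(\mu_i)$, the product becomes $\prod_{i=1}^m \det(\id-\psi(\mu_i)\a(\mu_i))^{-1}$, and the $\doteq$ symbol means the equality holds up to the standard indeterminacy $\pm dh$ with $d\in\det(\a(\pi(L)))$ and $h\in H$, which is exactly the asserted form.

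I do not expect any genuine obstacle here; the only point requiring a line of care is the bookkeeping that the empty-matrix conventions ($0\times 0$ determinant $=1$, $n=0$) are compatible with the statement and proof of Theorem~\ref{thm:tauboundary} — in particular that the Mayer--Vietoris argument there degenerates harmlessly when the Seifert surfaces are disks, so that the dimension count $\rank(L,\psi,\a)=km-k+0+n-\rank((\a\otimes\psi)(A^t-AT))$ still reads $km-k$. One could alternatively just cite the already-computed formula for the unlink in Corollary~\ref{cor:unlink}'s own statement as the base case, but since this corollary \emph{is} that statement, the cleanest route is the direct specialization of Theorem~\ref{thm:tauboundary} as above, noting additionally that part (3) of that theorem (or directly part (1)) guarantees the rank equality without any hypothesis on $\a$ factoring through a $p$--group, since here $\det(A-A^t)=\pm1$ vacuously.
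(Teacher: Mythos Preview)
Your proposal is correct and takes exactly the same approach as the paper, which simply states that the corollary ``follows immediately from taking $A$ to be the trivial matrix'' in Theorem~\ref{thm:tauboundary}. You have merely spelled out the details of this specialization (disks as Seifert surfaces, the empty-matrix convention, and the notational translation), all of which are accurate.
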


\subsection{Boundary slice links}

Let $L\subset S^3$ be an $m$--component boundary link.  We say $L$ is
\emph{boundary slice} if there exist disjointly embedded 3--manifolds
$W_1,\dots,W_m\in D^4$ such that for $i=1,\dots,m$, the boundary
$\partial W_i$ is the union of a Seifert surface and a slice disk
for~$L_i$.  It is known that a boundary slice link is boundary slice
with respect to \emph{any} boundary link Seifert surface.  We note
that boundary slice links are slice; it is a long-standing open
question whether the converse holds for boundary links.

% \begin{remark}
%   It is known that if $L$ is boundary slice, then for any boundary
%   link Seifert surfaces $V_1,\ldots,V_m$ for $L$, there exist
%   $W_1,\ldots,W_m$ with $\partial W_i=V_i \cup ($a slice disk for
%   $L_i)$ (e.g., see~\cite{Ko87}).  In other words, a boundary slice
%   link is boundary slice with respect to \emph{any} boundary link
%   Seifert surface
% \end{remark}

%\begin{remark}

Obstructions to being boundary slice has been studied by several
authors, including Cappell and Shaneson \cite{CS80}, Duval
\cite{Du86}, Ko \cite{Ko87}, Mio \cite{Mi87},
Sheiham~\cite{Sh03,Sh06}.  The following torsion obstruction to being
boundary slice is a consequence of these works.

% It is very difficult to find counter-examples since
% one can easily see that any ribbon boundary link is boundary slice.

%\end{remark}

% \begin{remark}
%   \begin{enumerate}
%   \item The notion of boundary slice generalizes to the notion of
%     boundary link concordance of pairs $(L,V)$ where $L$ is a boundary
%     link and $V$ a boundary link Seifert surface (see \cite{Ko87} for
%     details).
%   \item The notions of boundary slice and boundary link concordance
%     can be generalized to higher dimensions and Ko
%     \cite[Proposition~2.11]{Ko87} has shown that in higher dimensions
%     boundary link concordance classes of pairs $(L\subset S^{n+2},V)$
%     naturally give rise to a group $C_n(B_m)$.
%   \item Cappell and Shaneson \cite{CS80} have shown that any even
%     dimensional boundary link is boundary slice, i.e. $C_{2k}(B_m)=0$
%     for all $k$.  Furthermore Cappell and Shaneson \cite{CS80}, Duval
%     \cite{Du86}, Ko \cite{Ko87} and Mio \cite{Mi87} gave algebraic
%     descriptions of the group $C_{2k-1}(B_m)$ for $k \geq 2$. This
%     group was finally shown by Sheiham \cite{Sh03,Sh06} to be
%     isomorphic to $\Z^\infty\oplus \Z_2^\infty \oplus \Z_4^\infty
%     \oplus \Z_8^\infty$.
%   \end{enumerate}
% \end{remark}

\begin{theorem}\label{thm:boundaryslice}
  Let $(L,\varphi)$ be an $m$-component $F$-link which is boundary
  slice. Let $\a\colon \pi(L)\xrightarrow{\varphi} F\to \gl(R,k)$ be a
  unitary representation where $R\subset \C$ is a subring closed under
  complex conjugation. Let $\psi\colon \Z^m \to H$ be an admissible
  homomorphism. If $ \rank(L,\psi,\a)=k(m-1)$, then
  \[
  \tau^{\a\otimes \psi}(L) =\pm dh\cdot \prod\limits_{i=1}^m
  \det\big(\id-\psi(\mu_i)\a(\mu_i)\big)^{-1} \, \in Q(H)^\times/
  N(Q(H))
  \]
  for some $d\in \det(\a(\pi(L)))$ and $h\in H$.
\end{theorem}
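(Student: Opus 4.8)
The plan is to reduce Theorem~\ref{thm:boundaryslice} to the already-established computation for boundary links (Theorem~\ref{thm:tauboundary}) by exploiting the geometry of a boundary slice complement. Suppose $(L,\varphi)$ is boundary slice, witnessed by disjointly embedded $3$-manifolds $W_1,\dots,W_m\subset D^4$ with $\partial W_i$ the union of a Seifert surface $V_i$ for $L_i$ and a slice disk $D_i$. First I would set $C=D^4\sm(\nu W_1\cup\dots\cup\nu W_m)$ and observe that, just as in the proof of Theorem~\ref{thm:tauboundary}, the decomposition of $D^4$ along the $W_i$ gives a Mayer--Vietoris-type description of the twisted homology of $D^4$ (which is of course trivial) in terms of $H_*(W_i;R[H]^k)$ and $H_*(C;R[H]^k)$, where $\a$ and $\psi$ restrict trivially to each $\pi_1(W_i)$ and to $\pi_1(C)$ since these groups die in $F$ under $\varphi$. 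The $3$-manifold $W_i$ has the rational homology of a circle (its boundary being a closed surface capped appropriately, or more precisely since $D_i$ is a disk, $H_*(W_i)$ behaves like that of $V_i\times I$), and the crucial algebraic input is that the intersection-type pairing governing the Seifert-form matrix of a boundary slice link is metabolic: the work of Ko~\cite{Ko87} (and Duval~\cite{Du86}, Mio~\cite{Mi87}, Sheiham~\cite{Sh03,Sh06}, Cappell--Shaneson~\cite{CS80}) shows that a boundary link Seifert matrix $A$ of a boundary slice link is, after a suitable change of basis of the $H_1(V_i)$, of the block form $\left(\begin{smallmatrix}0&*\\ *&*\end{smallmatrix}\right)$ compatibly with the splitting along the components.

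With that in hand, the second step is to feed this metabolic Seifert matrix into part~(2) of Theorem~\ref{thm:tauboundary}: under the hypothesis $\rank(L,\psi,\a)=k(m-1)$ we have
\[
\tau^{\a\otimes\psi}(L)\doteq\det\big((\a\otimes\psi)(A^t-AT)\big)\cdot\prod_{i=1}^m\det(\id-\a(x_i)t_i)^{-1}
\in Q(H)^\times/N(Q(H)),
\]
so it remains to show that the factor $\det\big((\a\otimes\psi)(A^t-AT)\big)$ is a norm, i.e.\ trivial in $Q(H)^\times/N(Q(H))$. Here the metabolic block structure of $A$ is decisive: writing $A=\left(\begin{smallmatrix}0&B\\ C&D\end{smallmatrix}\right)$ in the Lagrangian-compatible basis, the matrix $A^t-AT$ acquires the block form $\left(\begin{smallmatrix}0&C^t-BT\\ B^t-CT&D^t-DT\end{smallmatrix}\right)$, and a standard determinant computation for such a block matrix gives $\det(A^t-AT)=\pm\det(C^t-BT)\det(B^t-CT)$ (the sign coming from the size of the blocks). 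Applying the ring homomorphism $\a\otimes\psi$, which is multiplicative on such one-variable-per-entry matrices, and using that the involution on $R[H]$ sends $t_i\mapsto t_i^{-1}$ together with unitarity of $\a$ (so that $(\a\otimes\psi)(M^t)$ is related to the conjugate transpose of $(\a\otimes\psi)(M)$ evaluated at inverse variables), one identifies $\det(\a\otimes\psi)(B^t-CT)$ with $\pm h\cdot\overline{\det(\a\otimes\psi)(C^t-BT)}$ for some $h\in H$. Hence the product $\det\big((\a\otimes\psi)(A^t-AT)\big)=\pm h\cdot q\overline q$ with $q=\det(\a\otimes\psi)(C^t-BT)$, which is precisely an element of $\pm H\cdot N(Q(H))$, and therefore trivial modulo norms up to the allowed indeterminacy $\pm dh$. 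Combining this with the displayed formula yields exactly the asserted expression for $\tau^{\a\otimes\psi}(L)$.

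I would then assemble the argument: invoke boundary sliceness to produce a metabolic boundary link Seifert matrix, apply Theorem~\ref{thm:tauboundary}(2) to express the torsion in terms of its determinant, and use the block computation above to kill that determinant in $Q(H)^\times/N(Q(H))$, absorbing the extra monomial $h$ and the determinant of $\a$ into the $\pm dh$ indeterminacy. The main obstacle I anticipate is the first step --- extracting the precise metabolic/Lagrangian normal form for a boundary slice link Seifert matrix that is simultaneously compatible with the multi-component block structure, and doing so in a way that interacts correctly with the twisting by $\a\otimes\psi$ and with the involution $t_i\mapsto t_i^{-1}$. One must be careful that the ``half'' of the basis spanning the Lagrangian is the one coming from cycles in $V_i$ that bound in $W_i$ (rel the slice disk), so that the vanishing block $0$ is genuinely the self-linking of these bounding cycles; verifying this geometrically --- rather than just citing the algebraic concordance-of-Seifert-forms statements of Ko and Sheiham --- and checking that unitarity of $\a$ plus the conjugation $\overline{t_i}=t_i^{-1}$ really does turn the two off-diagonal blocks into conjugates of one another, is where the real content lies. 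The rest is bookkeeping with determinants of block matrices and with the indeterminacy $\pm\det(\a(\pi(L)))\cdot H$.
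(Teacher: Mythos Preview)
Your proposal is correct and follows essentially the same route as the paper: cite Ko~\cite{Ko87} to obtain a metabolic boundary link Seifert matrix, plug into Theorem~\ref{thm:tauboundary}(2), and then use the block form of a metabolic matrix to show that $\det((\a\otimes\psi)(A^t-AT))$ is a norm in $Q(H)^\times$. The paper's write-up is more compressed---it simply observes that in the metabolic basis $(\a\otimes\psi)(A^t-AT)$ is equivalent to $\pm h\left(\begin{smallmatrix}0&X\\ \ol{X}^t&Y\end{smallmatrix}\right)$ and reads off the determinant---so your opening Mayer--Vietoris discussion for $D^4\sm\nu W$ is unnecessary scaffolding (you never use it once you invoke Ko's lemma), but the substance of the argument is the same.
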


\begin{proof}
  Let $V=V_1\cup \dots \cup V_m$ be a Seifert surface for $L$
  corresponding to $\varphi$ and let $(A_{ij})$ be the corresponding
  boundary link Seifert matrix.  We denote by $g_i$ the genus of
  $V_i$.  By the first remark of this subsection, there are
    disjointly embedded 3--manifolds $W_i$ in $D^4$ such that
    $\partial W_i = V_i \cup ($a slice disk for $L_i)$.  By Ko
    \cite[Lemma~3.3]{Ko87}, it follows that $(A_{ij})$ is metabolic.
  This means that for $i=1,\dots,m$ there exists an invertible
  $2g_i\times 2g_i$ matrix $P_i$ such that each $P_iA_{ij}P_j^t$ is of
  the form
  \[
  \bp 0 & C \\ D&E \ep
  \]
  where $0$ is a $g_i\times g_j$-matrix.  (In fact Ko \cite{Ko87} has
  shown that in higher odd dimensions $A$ being metabolic is in fact a
  sufficient condition for being boundary slice.)  It follows easily
  that $(\a\otimes \psi)(A^t-AT)$ is equivalent to a matrix of the
  form
  \[
  \pm h \bp 0& X \\ \ol{X}^t &Y\ep
  \]
  where $h\in H$ and $X$ and $Y$ are matrices over $R[H]$ of size
  $k\sum_i g_i$.  It now follows that
  \[
  \det ((\a\otimes \psi)(A^t-AT))=\pm u\det(R)\cdot \det(\ol{R}^t)=\pm
  u \det(R)\cdot \ol{\det(R)}
  \]
  for some $u\in H$.  The theorem now follows immediately from Theorem
  \ref{thm:tauboundary} and the assumption that $
  \rank(L,\psi,\a)=k(m-1)$.
\end{proof}

\begin{remark}
  Let $L$ be a boundary link. According to Theorem \ref{mainthm} the
  twisted torsion corresponding to any representation factoring
  through a $p$--group gives a sliceness obstruction.  On the other
  hand, according to Theorems \ref{thm:compboundary} and
  \ref{thm:boundaryslice} the twisted torsions corresponding to many
  more representations give an obstruction to being boundary
  slice. This is very similar to the situation in \cite{Fr05} where
  the vanishing of the eta invariant for (boundary) slice links is
  investigated, as well as~\cite{CKo99,Sm89} (cf.\
  \cite[Theorem~4.8]{Fr05} and \cite[Theorem~4.3]{Fr05}). Levine
  \cite{Le07} used work of Vogel \cite{Vo90} to resolve the apparent
  discrepancy in \cite{Fr05} between the vanishing of the eta
  invariants for boundary links which are slice and those which are
  boundary slice.
\end{remark}

%===========================================================
\section{Computation for satellite links}\label{section:satellitebing}

%===========================================================
\subsection{The satellite construction}

Let $L\subset S^3$ be an $m$--component oriented link and let
$K\subset S^3 $ be a knot. Let $A\subset E_L$ be a simple closed
curve, unknotted in $S^3$. Then $S^3\sm \nu A$ is a solid torus. Let
$\phi\colon \partial(\overline{\nu A})\to
\partial(\overline{\nu K})$ be a diffeomorphism which sends a meridian
of $A$ to a longitude of $K$, and a longitude of $A$ to a meridian of
$K$. The space
\[
\left({S^3\sm \nu A}\right) \cup_{\phi} \left({S^3\sm \nu K}\right)
\]
is diffeomorphic to $S^3$. The image of $L$ is denoted by
$S=S(L,K,A)$. We say $S$ is the \emph{satellite link with companion
  $K$, orbit $L$ and axis $A$}. Put differently, $S$ is the result of
replacing a tubular neighborhood of $K$ by a link in a solid torus,
namely by $L\subset {S^3\sm \nu A}$. Note that $S$ inherits an
orientation from $L$.

An important example is given by letting $L$ be the unlink with two
components and $A\subset E_L$ as in Figure \ref{fig:bd}.
\begin{figure}[h]
  \begin{center}
    \includegraphics[scale=0.95]{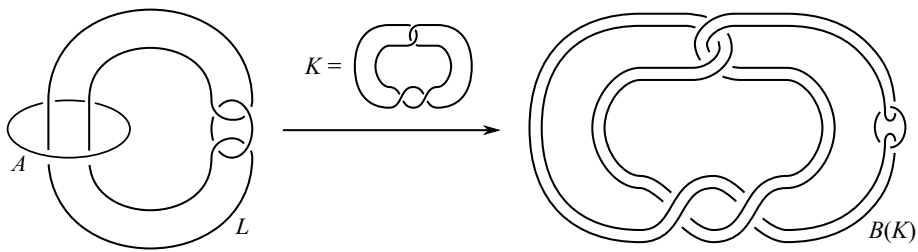}
    \caption{The Bing double of the Figure 8 knot.}
    \label{fig:bd}
  \end{center}
\end{figure}
The corresponding satellite knot $S(L,K,A)$ is called the \emph{Bing
  double of $K$} and referred to as $B(K)$.

We now return to the discussion of satellite links in general.  Note
that the abelianization map $\pi_1(S^3\sm \nu K)\to \Z$ gives rise to
a map of degree one $S^3\sm \nu K$ to $\overline{\nu A}$ which is a
diffeomorphism on the boundary.  In particular we get an induced map
\[
E_S = \left({S^3\sm \nu A \sm \nu L}\right) \cup_{\phi} \left({S^3\sm
    \nu K}\right) \to \left({S^3\sm \nu A \sm \nu L}\right) \cup
\overline{\nu A}= E_L
\]
which we denote by $f$. Note that $f$ is a diffeomorphism on the
boundary and that $f$ induces an isomorphism of homology groups.

Before we state the next lemma note that the curve $A$ defines an
element in $A\in \pi(L)$ which is well--defined up to conjugation.

\begin{lemma} \label{lemmahofsat} Let $L,K,A, S=S(L,K,A)$ and $f$ as
  above.  Let $Q$ be a subfield of $\C$ and let $\a\colon \pi(L)\to
  \gl(Q,k)$ be a unitary representation. We denote the representation
  $\pi(S)\xrightarrow{f}\pi(L)\xrightarrow{\a} \gl(Q,k)$ by $\a$ as
  well.  Let $\psi\colon \Z^m\to H$ be an admissible homomorphism such
  that $\psi(A)=0$.  Denote by $z_1,\dots,z_k$ the eigenvalues of
  $\a(A)$ and let $\Delta_K\in \zt$ be a fixed representative of the
  Alexander polynomial of $K$.  Then the following hold:
  \begin{enumerate}
  \item $\rank(S,\psi,\a)=\rank(L,\psi,\a)$ if and only if
    $\Delta_K(z_i)\ne 0$ for $i=1,\dots,k$,
  \item if $\Delta_K(z_i)\ne 0$ for $i=1,\dots,k$, then
    \[
    \tau^{\a\otimes \psi}(S)= \tau^{\a\otimes \psi}(L)\cdot
    \prod_{i=1}^k \Delta_K(z_i) \in Q(H)^\times/N(Q(H))
    \]
    up to multiplication by an element of the form $\pm dh$ with $
    d\in \det(\a(\pi(L)))$ and $ h\in H$.
  \end{enumerate}
\end{lemma}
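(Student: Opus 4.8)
The plan is to analyze $E_S$ via the decomposition $E_S = (S^3\sm\nu A\sm\nu L)\cup_\phi (S^3\sm\nu K)$, which splits along a torus $T=\partial(\overline{\nu A})$. Write $Y = S^3\sm\nu A\sm\nu L$ (so $E_L = Y\cup\overline{\nu A}$ and the map $f$ collapses $S^3\sm\nu K$ onto $\overline{\nu A}$) and $X_K = S^3\sm\nu K$. The key observation is that all the representation data on $E_S$ is pulled back via $f$, and since $\psi(A)=0$ the homomorphism $\psi$ is trivial on $X_K$ and on $T$, while $\a$ restricted to $\pi_1(X_K)$ factors through $\pi_1(X_K)\to\Z$ (generated by the meridian of $K$, which maps under $f$ to the longitude of $A$, i.e.\ to $A\in\pi(L)$) followed by $\mathbb{Z}\to\gl(Q,k)$, $1\mapsto\a(A)$. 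First I would compute the $Q(H)^k$-homology of the piece $X_K$: because $\psi|_{\pi_1(X_K)}$ is trivial, the coefficient system on $X_K$ is just $Q(H)^k$ with the $\mathbb{Z}$-action through $\a(A)$; after diagonalizing $\a(A)$ (it is unitary, hence diagonalizable, with eigenvalues $z_1,\dots,z_k$), $H_*(X_K;Q(H)^k)$ decomposes as a direct sum of $k$ copies of $H_*(X_K;Q(H))$ with the respective $z_i$-twisted $\mathbb{Z}$-action, and the standard computation of the Alexander module of a knot gives $H_1 = \bigoplus_i Q(H)/(\Delta_K(z_i))$ up to the free part, with $H_0$ and $H_2$ controlled similarly. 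In particular $H_*(X_K;Q(H)^k)\cong H_*(\overline{\nu A};Q(H)^k)$ (the target of the collapse $f$) precisely when all $\Delta_K(z_i)\ne 0$.

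Next I would feed this into a Mayer–Vietoris comparison. Applying Mayer–Vietoris to $E_S = Y\cup_T X_K$ and to $E_L = Y\cup_T \overline{\nu A}$ with $Q(H)^k$ coefficients, and using that $f$ restricts to the identity on $Y$ and on $T$ and to the map $X_K\to\overline{\nu A}$ on the last piece, a five-lemma argument shows that $H_*(E_S;Q(H)^k)\to H_*(E_L;Q(H)^k)$ is an isomorphism exactly when $H_*(X_K;Q(H)^k)\to H_*(\overline{\nu A};Q(H)^k)$ is, i.e.\ exactly when $\Delta_K(z_i)\ne 0$ for all $i$. (Here one also checks $H_*(T;Q(H)^k)=0$, which holds because $\psi$ is nontrivial on the meridian of $L_j$ bounding… more precisely because the torus $T$ carries a circle on which $\psi$ is nontrivial — this needs $A$ to be chosen so that some meridian direction survives; in the intended applications $\psi(A)=0$ but $\psi$ is nontrivial on the meridian of $A$, which is a longitude of $K$ with $\psi$-image $0$, so one instead uses that $H_*$ of a torus with a nontrivial abelian coefficient twist vanishes — I will need to verify the relevant circle on $T$ has nontrivial $\psi$). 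This proves part (1), and identifies $\rank(S,\psi,\a)=\rank(L,\psi,\a)$.

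For part (2), assuming all $\Delta_K(z_i)\ne 0$, I would compute torsions multiplicatively. Use the short exact sequence of $Q(H)^k$-coefficient cellular chain complexes
\[
0\to C_*(T;Q(H)^k)\to C_*(Y;Q(H)^k)\oplus C_*(X_K;Q(H)^k)\to C_*(E_S;Q(H)^k)\to 0
\]
and the analogous one for $E_L$ with $X_K$ replaced by $\overline{\nu A}$. By Theorem \ref{thm:alg-prop-torsion}(4), $\tau^{\a\otimes\psi}(E_S)$ and $\tau^{\a\otimes\psi}(E_L)$ differ, up to the torsion of a homology long exact sequence and a self-dual basis choice, exactly by the ratio $\tau^{\a\otimes\psi}(X_K;\cdot)/\tau^{\a\otimes\psi}(\overline{\nu A};\cdot)$ — the contributions of $Y$ and $T$ cancel identically because $f$ is the identity there. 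Since $\overline{\nu A}$ is a solid torus with trivial $\psi$, its twisted torsion with the $\a(A)$-action is $\prod_i \det(\id - z_i)^{\pm1}$ up to units, and $X_K$ is a knot exterior whose $z_i$-twisted Reidemeister torsion is the classical $\Delta_K(z_i)/(1-z_i)$ (cf.\ the Remark after Theorem \ref{mainthmintro}, and \cite{Mi62,Tu01}); the $(1-z_i)$ factors cancel against $\overline{\nu A}$, leaving exactly $\prod_{i=1}^k\Delta_K(z_i)$. One must check that the self-dual basis choices on $E_S$ and $E_L$ correspond under $f$ (so that the $N(Q(H))$-ambiguity and the homology long exact sequence torsion contribute trivially to the quotient $Q(H)^\times/N(Q(H))$) — this is where the eigenvalue-pairing $\{z_i\}=\{\overline{z_i}^{-1}\}$ from unitarity of $\a(A)$, already used in Lemma \ref{lem:changebasis}, makes $\prod_i\Delta_K(z_i)$ well-behaved under the involution.

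The main obstacle I anticipate is the bookkeeping in the torsion computation of part (2): matching up self-dual bases of $H_*(E_S;Q(H)^k)$ and $H_*(E_L;Q(H)^k)$ across the isomorphism induced by $f$, and verifying that the torsion of the Mayer–Vietoris homology long exact sequence and the difference between "the obvious basis" and "a self-dual basis" contribute only norms, so that the clean identity holds in $Q(H)^\times/N(Q(H))$ rather than merely in $Q(H)^\times$. The homological algebra comparing the two Mayer–Vietoris sequences is routine; pinning down the torsion of $X_K$ with non-trivial $\mathbb{Z}$-twist (and $\psi$ trivial) to be the classical Milnor torsion $\prod_i\Delta_K(z_i)/(1-z_i)$ is standard but must be stated carefully given that $Q(H)$, not $Q$, is the coefficient field — here one uses that $\psi$ kills $\pi_1(X_K)$ so the computation genuinely takes place over the subfield generated by $Q$ and the $z_i$.
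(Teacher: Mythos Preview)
Your overall strategy---decompose $E_S$ and $E_L$ along the gluing torus, compare via Mayer--Vietoris, and use multiplicativity of torsion---is exactly the paper's approach. However, there is a genuine gap in your torsion computation for part~(2).

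The issue is the case $z_i=1$. Since $\Delta_K(1)=\pm 1$ for any knot, the hypothesis $\Delta_K(z_i)\ne 0$ does \emph{not} rule out $z_i=1$; indeed, in the paper's own Bing-double application one of the eigenvalues is $+1$. When $z_i=1$ the solid torus $\overline{\nu A}$ and the knot exterior $X_K$ are \emph{not} $Q^k$-acyclic, so your formulas $\tau^{\a}(\overline{\nu A})=\prod_i(1-z_i)^{\pm 1}$ and $\tau^{\a}(X_K)=\prod_i \Delta_K(z_i)/(1-z_i)$ are undefined (they have the form $0/0$). Relatedly, your worry about $H_*(\text{gluing torus};Q(H)^k)=0$ is well-founded: both generators of $\pi_1(\partial(\overline{\nu A}))$ have trivial $\psi$-image (the meridian of $A$ bounds a disk in $\overline{\nu A}\subset E_L$, and $\psi(A)=0$), so this homology is nonzero whenever some $z_i=1$.

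The paper circumvents both problems simultaneously. Rather than computing $\tau^{\a}(X_K)$ and $\tau^{\a}(\overline{\nu A})$ separately, it computes the torsion of the mapping cone $\tau^{\a}(f\colon E_K\to T)$, which is $Q^k$-acyclic precisely under the hypothesis $\Delta_K(z_i)\ne 0$ (the claim that $f_*$ is an isomorphism on all $H_j(-;Q^k)$ is checked directly). A Seifert-matrix computation then gives $\tau^{\a}(f\colon E_K\to T)^{-1}=\prod_i\det(B^t-Bz_i)=\prod_i\Delta_K(z_i)$, with no spurious $(1-z_i)$ factors. For the remaining bookkeeping the paper simply carries bases $\DD_i$ for $H_i(T;Q^k)$, $\EE_i$ for $H_i(\partial T;Q(H)^k)$, and $\FF_i$ for $H_i(E_L\sm\nu A;Q(H)^k)$ through both Mayer--Vietoris sequences; since these pieces are literally identical in the two decompositions, their contributions cancel in the ratio $\tau^{\a\otimes\psi}(E_S)/\tau^{\a\otimes\psi}(E_L)$, and no acyclicity of the torus is needed. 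Your argument can be repaired along these lines.
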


Before we give the proof of Lemma \ref{lemmahofsat} we first state and
prove the following result which is an immediate consequence of Lemma
\ref{lemmahofsat} and Theorem \ref{mainthmintro}.

\begin{corollary} \label{corhofsat} Let $L$ be an oriented
  $m$--component slice link and let $K,A$ and $S=S(L,K,A)$ as above.
  Let $Q$ be a subfield of $\C$ closed under complex conjugation which
  has the unique factorization property and let $\a\colon \pi(L)\to
  \gl(R,k)$ be a representation which factors through a $p$--group.
  Let $\psi\colon \Z^m\to H$ be an admissible homomorphism.  Denote by
  $z_1,\dots,z_k$ the eigenvalues of $\a(A)$ and let $\Delta_K\in \zt$
  be a fixed representative of the Alexander polynomial of $K$.
  Assume that $\psi(A)=0$.  If $S=S(L,K,A)$ is slice, then
  $\Delta_K(z_i)\ne 0$ for $i=1,\dots,k$ and
  \[
  \prod_{i=1}^k \Delta_K(z_i)=\pm d\cdot q\ol{q} \in Q
  \]
  for some $d\in \det(\a(\pi(L)))$ and $q\in Q$.
\end{corollary}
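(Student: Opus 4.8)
The plan is to deduce Corollary \ref{corhofsat} by combining the satellite formula of Lemma \ref{lemmahofsat} with the sliceness obstruction of Theorem \ref{mainthmintro} (equivalently Corollary \ref{cor:maincor}), and then doing a bit of commutative algebra in the unique factorization domain $Q(H)$. First I would observe that if $S=S(L,K,A)$ is slice, then in particular $\rank(S,\psi,\a)=k(m-1)$ by Theorem \ref{mainthmintro}. On the other hand, since $L$ is slice, Theorem \ref{mainthmintro} also gives $\rank(L,\psi,\a)=k(m-1)$. Hence $\rank(S,\psi,\a)=\rank(L,\psi,\a)$, so part (1) of Lemma \ref{lemmahofsat} immediately yields $\Delta_K(z_i)\ne 0$ for $i=1,\dots,k$, which is the first assertion.

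Next I would apply part (2) of Lemma \ref{lemmahofsat}, which gives
\[
\tau^{\a\otimes \psi}(S)= \tau^{\a\otimes \psi}(L)\cdot \prod_{i=1}^k \Delta_K(z_i) \in Q(H)^\times/N(Q(H))
\]
up to a factor of the form $\pm dh$. Applying Theorem \ref{mainthmintro} to both $S$ and $L$ (both of which are slice, both with representation factoring through a $p$-group, both with the same admissible $\psi$ on $\Z^m$, and both having the same meridians $\mu_i$ mapping to $x_i$, so the products $\prod_i\det(\id-\psi(\mu_i)\a(\mu_i))^{-1}$ agree), I would conclude that $\tau^{\a\otimes\psi}(S)$ and $\tau^{\a\otimes\psi}(L)$ are equal up to multiplication by an element of the form $\pm dh$ with $d\in\det(\a(\pi(L)))$ and $h\in H$. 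Substituting into the satellite formula, we obtain
\[
\prod_{i=1}^k \Delta_K(z_i)=\pm d h \cdot n \in Q(H)^\times
\]
for some $d\in\det(\a(\pi(L)))$, $h\in H$, and a norm $n=q\ol q$ with $q\in Q(H)^\times$.

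Finally I would pass from the equation in $Q(H)$ down to an equation in $Q$. Note $\prod_i\Delta_K(z_i)$ lies in $Q$ (indeed the $z_i$ are eigenvalues of the finite-order matrix $\a(A)$, so they are algebraic numbers in $Q$, and $\Delta_K$ has integer coefficients), while $d\in\det(\a(\pi(L)))\subset Q^\times$ as well. Therefore $h\cdot q\ol q=\pm d^{-1}\prod_i\Delta_K(z_i)\in Q^\times$. Since $Q(H)$ is a UFD and the units of $R[H]$ are $\pm(\text{unit of }R)\cdot h$, comparing the $H$-grading forces $h=1$ (up to sign absorbed into the $\pm$) and $q\ol q\in Q^\times$; more precisely, writing $q=a/b$ with $a,b\in R[H]$ coprime, $q\ol q = a\ol a/(b\ol b)$, and since $a\ol a/(b\ol b)$ differs from an element of $Q^\times$ only by $h$, unique factorization in $R[H]$ together with $\ol h = h^{-1}$ lets one rescale $q$ by an element of $R[H]$ to arrange $q\in Q^\times$ while keeping $q\ol q$ unchanged modulo $Q$-norms. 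The upshot is
\[
\prod_{i=1}^k \Delta_K(z_i)=\pm d\cdot q\ol q \in Q
\]
for some $d\in\det(\a(\pi(L)))$ and $q\in Q$, as claimed. The main obstacle I anticipate is exactly this last descent step: being careful that the $H$-degree bookkeeping and the unique factorization argument really do let one absorb $h$ and replace the $Q(H)$-valued $q$ by an honest element of $Q$ without changing the class of $q\ol q$; the UFD hypothesis on $Q$ is precisely what is needed here, and one should check that no extra unit ambiguity beyond $\pm d$ survives.
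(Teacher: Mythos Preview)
Your plan is exactly what the paper does: it declares the corollary an immediate consequence of Lemma~\ref{lemmahofsat} and Theorem~\ref{mainthmintro} and gives no further argument. The descent you flag is routine: since $\prod_i\Delta_K(z_i)$ and $d$ lie in $Q$, applying the involution to $\prod_i\Delta_K(z_i)=\pm dh\,q\ol q$ and dividing gives $h^2\in Q$, hence $h=1$ as $H$ is torsion-free; then $q\ol q\in Q^\times$, and writing $q=a/b$ with $a,b\in Q[H]$ coprime, the relation $a\ol a=(q\ol q)\,b\ol b$ in the UFD $Q[H]$ forces $\ol a=bw$ with $w\ol w=q\ol q$, and a leading-term comparison shows any $w\in Q[H]$ with $w\ol w\in Q$ is a monomial $eg$, so $q\ol q=e\ol e$ with $e\in Q^\times$.
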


\begin{proof}[Proof of Lemma \ref{lemmahofsat}]
  We write $T=\ol{\nu A}$. Consider the following commutative diagram
  of short exact sequences of cellular chain complexes (where we write
  $V=Q(H)^k$):
%\[ \ba{cccccccccc}
%0 \to \hspace{-0.2cm}& C_*(\partial(T);V) & \hspace{-0.2cm}\to
%\hspace{-0.2cm}&C_*({E_L\sm \mbox{int} \, T};V) &\hspace{-0.2cm}\oplus \hspace{%-0.2cm} & C_*({S^3\sm \nu
%C};V)& \hspace{-0.2cm}\to \hspace{-0.2cm}&
%C_*(E_S;V)&\hspace{-0.2cm}\to\hspace{-0.2cm}& 0 \\[2mm]
%&\downarrow  id &&\downarrow id &&\downarrow f&&\downarrow f&\\[2mm]
%0 \to \hspace{-0.2cm}& C_*(\partial(T);V) &\hspace{-0.2cm}\to\hspace{-0.2cm}
%&C_*({E_L\sm \mbox{int} \, T};V) &\hspace{-0.2cm}\oplus\hspace{-0.2cm}& C_*(T;V%)
%&\hspace{-0.2cm}\to \hspace{-0.2cm}& C_*(E_L;V)&\hspace{-0.2cm}\to\hspace{-0.2c%m}& 0.\ea
%\]
  \[
  \begin{diagram}\dgHORIZPAD=5pt \dgARROWLENGTH=1em
    \node{0}\arrow{e}
    \node{C_*(\partial(T);V)}\arrow{e}\arrow{s,r}{\id}
    \node{\hspace*{3em}}
    \node{C_*({E_L\sm \nu A};V) \oplus C_*({S^3\sm \nu C};V)}
    \arrow{s,r}{\id\oplus f}
    \node{\hspace*{3em}}\arrow{e}
    \node{C_*(E_S;V)}\arrow{e}\arrow{s,r}{f}
    \node{0}
    \\
    \node{0}\arrow{e}
    \node{C_*(\partial(T);V)}\arrow{e}
    \node{\hspace*{.5em}}
    \node{C_*({E_L\sm \nu A};V) \oplus C_*(T;V)}
    \node{\hspace*{.5em}}\arrow{e}
    \node{C_*(E_L;V)}\arrow{e}
    \node{0}
  \end{diagram}
  \]
  We assumed that $\psi(A)=0$. It follows that the map $\psi$
  restricted to $T$ and restricted to $S^3\sm \nu K$ is trivial. In
  particular $C_*^{\a\otimes \psi}(E_K;Q(H)^k)=C_*^\a(E_K;Q^k)\otimes
  Q(H)$ and $C_*^{\a\otimes \psi}(T;Q(H)^k)=C_*^\a(T;Q^k)\otimes
  Q(H)$.  It is a consequence of \cite[Section~5]{Go78} that
  $f_*\colon H_1(E_K;Q^k)\to H_1(T;Q^k)$ is always surjective and it
  is an isomorphism if and only if $\Delta_K(z_i)\ne 0$ for
  $i=1,\dots,k$. The first statement of the lemma now follows
  immediately from the commutative diagram of long exact homology
  sequences corresponding to the above diagram.

  Now suppose that $\Delta_K(z_i)\ne 0$ for $i=1\,\dots,k$.

  \begin{claim}
    The map $f_*\colon H_j(E_K;Q^k)\to H_j(T;Q^k)$ is an isomorphism
    for any~$j$.
  \end{claim}

  We already saw above that $f_*\colon H_1(E_K;Q^k)\to H_1(T;Q^k)$ is
  an isomorphism.  Note that $E_K$ is homotopy equivalent to a
  2--complex and that $T$ is homotopy equivalent to 1--complex.  In
  particular $H_2(T;Q^k)=0$ and it follows from the long exact
  sequence that $H_2(E_K;Q^k)=0$ as well.  Note that $\pi_1(E_K)\to
  \pi_1(T)$ is surjective, it follows that $f_*\colon H_0(E_K;Q^k)\to
  H_0(T;Q^k)$ is surjective as well.  On the other hand we have that
  $\chi(E_K)=0=\chi(T)$, hence
  \[
  b_0(E_K;Q^k)=b_1(E_K;Q^k)+k\chi(E_K)=b_1(T;Q^k)+k\chi(T)=b_0(T;Q^k),
  \]
  i.e. $f_*\colon H_0(E_K;Q^k)\to H_0(T;Q^k)$ is in fact an
  isomorphism.  This concludes the proof of the claim.

  Let $\DD_i$ be $Q$--bases for $H_i(T;Q^k), i=0,1$.  We endow
  $H_i(E_K;Q^k)$ with the corresponding basis $f_*^{-1}(\DD_i),
  i=0,1$.  It follows from the long exact sequence corresponding to
  $f\colon E_K\to T$ that $H_*(f\colon E_K\to T;Q^k)=0$.  It follows
  from Theorem \ref{thm:mi66} (4) that
  \[
  \tau^\a(T;\{\DD_i\}_{i=0,1})=\tau^\a(E_K,\{f_*^{-1}(\DD_i)\}_{i=0,1})\cdot
  \tau^\a(f\colon E_K\to T).
  \]
  (Here and in the following lines all equalities of torsion are up to
  multiplication by an element of the form $\pm dh$ with $d\in
  \det(\a(\pi(L)))$ and $h\in H$.)  Note that the $\DD_i$ are also
  $Q$--bases for $H_i(T;Q(H)^k)=H_i(T;Q^k)\otimes Q(H), i=0,1$. In
  particular the calculations in the previous lines also work for the
  torsions corresponding to twisting by $\a\otimes \psi$.

  Let $B$ be a Seifert matrix for the knot $K$. It follows easily from
  \cite[Section~5]{Go78} and \cite[Theorem~2.2]{Tu01} that
  \[
  \tau^\a(f\colon E_K\to T)^{-1}=\prod_{i=1}^k
  \det(B^t-Bz_i)=\prod_{i=1}^k \Delta_K(z_i).
  \]
  Now pick a $Q(H)$--basis $\BB$ for $H_1(E_L;V)$ and denote the dual
  basis for $H_2(E_L;V)$ by $\BB'$.  Finally pick bases $\EE_i,
  i=0,1,2$ for $H_i(\partial T;Q(H)^k)$.  Note that the bases
  $\DD_i,\BB,\EE_i$ give rise to bases $\FF_i, i=0,1,2$ for
  $H_*(E_L\sm \mbox{int} \, T;Q(H)^k)$.  It now follows from Theorem
  \ref{thm:mi66} (4) that
  \begin{align*}
    \tau^{\a\otimes \psi}(E_S,\{f^{-1}(\BB_i)\})&= \tau^{\a\otimes
      \psi}(E_L\sm \mbox{int} \, T;\{\FF_i\})\cdot \tau^{\a\otimes
      \psi}(E_K;\{f^{-1}(\DD_i)\}) \\
    & \qquad \cdot \tau^{\a\otimes \psi}(\partial
    T,\{\EE_i\})^{-1}
    \\
    \tau^{\a\otimes \psi}(E_L,\{\BB_i\})&= \tau^{\a\otimes \psi}(E_L\sm
    \mbox{int} \, T;\{\FF_i\})\cdot \tau^{\a\otimes
      \psi}(T;\{\DD_i\})\cdot \tau^{\a\otimes \psi}(\partial
    T,\{\EE_i\})^{-1}
  \end{align*}
  (Here and in the next line the equalities of torsion are up to
  multiplication by an element of the form $\pm dh$ with $d\in
  \det(\a(\pi(L)))$ and $h\in H$.)  Combining these results we now
  obtain that
  \[
  \tau^{\a\otimes \psi}(E_S,\{f^{-1}(\BB_i)\})=\tau^{\a\otimes
    \psi}(E_L,\{\BB_i\})\cdot \prod_{i=1}^k \Delta_K(z_i).
  \]
  This implies the desired equality.
\end{proof}

\subsection{Bing doubles}

We consider the Bing double $B(K)$ of a knot $K$, in order to provide
an example for a detailed computation.  Note that if $K$ is a slice
knot, then $B(K)$ is in fact a slice link (cf.\
\cite[Section~1]{Ci06}).  The converse is a well--known folklore
conjecture.

Given any knot $K$ all the `classical' sliceness obstructions (e.g.\
multivariable Alexander polynomial, Levine--Tristram signatures) of
its Bing double are trivial (cf.\ \cite[Section~1]{Ci06} for a
beautiful survey).  It is therefore an interesting question to
determine which methods and invariants detect the non-sliceness of
Bing doubles.  For interesting results, we refer to
Cimasoni~\cite{Ci06}, Harvey~\cite{Ha08}, Cha~\cite{Ch07,Ch09}, and
Cha-Livingston-Ruberman~\cite{CLR08}.  In particular in \cite{CLR08}
it was proved that if $K$ is not algebraically slice, $B(K)$ is not
slice.  Also it is known that there are algebraically slice knots with
non-slice Bing doubles~\cite{CK08,CHL08}.

%  The first result
% was obtained by Cimasoni~\cite{Ci06} who used Sheiham's work
% \cite{Sh03} to show that if $B(K)$ is algebraically boundary slice,
% then $K$ is algebraically slice.  Harvey used certain von-Neumann
% $\rho$-invariants from metabelian covers to prove that $B(K)$ is not
% slice if the integral of the Levine--Tristram signature of $K$ over
% the unit circle vanishes~\cite{Ha08}.  The first author used his
% $L$-group valued invariants obtained from iterated $p$-covers to prove
% that the Levine--Tristram signature function of $K$ is nontrivial,
% then $B(K)$ is not slice~\cite{Ch07}.  He also obtained obstructions
% to $B(K)$ being slice from discriminant invariants.  These results
% were strengthened by the first author, Livingston and Ruberman
% \cite{CLR08} who proved the following: if $K$ is not algebraically
% slice, then $B(K)$ is not slice.

Now let $K=4_1$ be the Figure 8 knot. It is well--known that $K$ is
not slice.  For example this follows from \cite{FM66} since the
Alexander polynomial $\Delta_{4_1}(t)=t^{-1}-3+t$ is not a norm,
namely not of the form $\pm t^k f(t)f(t^{-1})$.
% On the other hand $K\# K$ is slice, i.e. $K$ is torsion in the knot
% concordance group.  , in particular all Levine--Tristram signatures
% of $K$ vanish.  Furthermore the signature invariants of
% \cite{CKo99}, \cite{Fr05} and \cite{Ha08} vanish.  We will now show
% that our invariants can detect that the Bing double of the Figure 8
% knot is not slice.
In what follows we show that an appropriate twisted torsion invariant
of the Bing double $B(4_1)$ is not a norm, and consequently $B(4_1)$
is not slice.  (The fact that $B(4_1)$ is not slice had first been
shown in \cite{Ch07} using the discriminant invariant.)

Let $x,y$ be the meridians of the two component unlink $L$ and denote
by $A$ the unknotted curve in the definition of Bing doubles.  It is
well-known that $A$ represents the commutator $[x,y]$. We write
$\xi=e^{2\pi i/8}$.  Note that $\Z[\xi]$ is a UFD (cf.\
\cite[Theorem~11.1]{Wa97}).  Consider the representation $\a\colon
\pi(L)=\ll x,y\rr\to \gl(\Z[\xi],8)$ given by
\[ \a(x)=\bp
0&0&0&\xi   &0&0&0&0 \\
1&0&0&0   &0&0&0&0 \\
0&1&0&0   &0&0&0&0 \\
0&0&-i&0   &0&0&0&0 \\
0&0&0&0   &0&0&0&\xi \\
0&0&0&0   &-1&0&0&0 \\
0&0&0&0   &0&1&0&0 \\
0&0&0&0   &0&0&-\xi&0 \ep
\]
and
\[
\a(y)=\bp
0&0&0&0   &\xi&0&0&0 \\
0&0&0&0   &0&0&0&-i \\
0&0&0&0   &0&0&\xi^3&0 \\
0&0&0&0   &0&\xi^7&0&0 \\
i&0&0&0   &0&0&0&0 \\
0&0&0&\xi   &0&0&0&0 \\
0&0&\xi^3&0   &0&0&0&0 \\
0&-1&0&0   &0&0&0&0 \ep.
\]
It follows easily from Proposition \ref{lem:ppowerrep} that $\a$
factors through a 2--group.  We denote the representation
$\pi_1(E_{B(K)})\xrightarrow{f} \pi(L)\to \gl(\Z[\xi],8)$ by $\a$ as
well, and we let $\psi$ be the isomorphism $H_1(E_{B(K)})\cong
H_1(E_L)\to \Z^2$.  (Here $K=4_1$.)  We compute the eigenvalues of
$\a(A)=\a([x,y])$ to be
\[ \{\pm 1,\pm i, \pm e^{2\pi i/16}, \pm e^{2\pi 3i/16}\}.\]
We then calculate
\begin{multline*}
  \Delta_K(1)\cdot \Delta_K(-1) \cdot \Delta_K(i)\cdot \Delta_K(-i)\\
  \cdot \Delta_K(e^{2\pi i/16})\cdot \Delta_K(-e^{2\pi i/16})\cdot
  \Delta_K(e^{2\pi 3i/16})\cdot \Delta_K(-e^{2\pi 3i/16})
\end{multline*}
to equal~$2115$.  If $B(K)$ was slice, then by Corollary
\ref{corhofsat} we would have
\[
2115=\pm d q\cdot \ol{q}
\]
for some $d\in \det(\a(\pi(L)))$ and for some $q$ in the quotient
field of $\Z[\xi]$.  Note that $q\ol{q}$ is a positive real number and
note that $\det(\a(\pi(L)))\cap \R=\pm 1$.  It therefore follows that
if $B(K)$ is slice, then $2115=q\cdot \ol{q}$.  Since $\Z[\xi]$ is a
UFD we furthermore deduce that $q$ actually lies in $\Z[\xi]$.  Now
note that $2115=47\cdot 45=47\cdot (6+3i)(6-3i)$. Since $\Z[\xi]$ is a
UFD we only have to show that $47$ is not a norm. A direct calculation
shows that any norm in $\Z[\xi]$ is of the form
\[
a^2+b^2+c^2+d^2+\sqrt{2}(a(b-d)+c(b+d))
\]
for some $a,b,c,d\in \Z$.  One can now easily deduce that $47$ is not
a norm in $\Z[\xi]$. This concludes the proof of the claim.

Note that the process of Bing doubling can be iterated. We refer to
the work of the first author \cite{Ch07}, the first author and Kim
\cite{CK08}, Cochran, Harvey and Leidy \cite{CHL08} and van Cott
\cite{vCo09} for interesting results on iterated Bing doubling.

% We conclude this paper with a short list of questions and open
% problems:
% \begin{enumerate}
% \item Let $K$ be a knot and $BD(K)$ its Bing double. Let $\psi\colon
%   H_1(BD(K))\to \Z^2$ an isomorphism.  Assume that $\tau^{\a\otimes
%     \psi}(BD(K))$ is a norm for any representation $\a$ factoring
%   through a $p$--group. Does it follow that $\Delta_K$ is a norm?
% \item Find the relationship between the concordance obstructions of
%   \cite{Ch07} and the obstructions coming from twisted torsion
%   invariant.  We expect that our concordance obstructions are
%   equivalent to the discriminant of the $L$-group valued invariant
%   presented in \cite{Ch07}. In particular we expect that the
%   obstructions of this paper are strong enough to show that none of
%   the iterated Bing doubles of the Figure 8 knot are slice (see
%   \cite{Ch07}).
% \end{enumerate}

\parindent=0mm


\begin{thebibliography}{HKL00}

\bibitem[CS80]{CS80} S. Cappell and J. Shaneson, {\em Link cobordism},
  Comment. Math. Helv. 55 (1980), 20--49.

% J.~C. Cha, \emph{The structure of the rational concordance group of
%   knots}, Mem. Amer. Math. Soc. \textbf{189} (2007), no.~885, x+95.

\bibitem[Ch10]{Ch07} J. C. Cha, \emph{Link concordance, homology
  cobordism, and Hirzebruch-type defects from iterated p-covers},
  Journal of the European Mathematical Society 12 (2010), 555--610.

\bibitem[Ch09]{Ch09} J. C, Cha, \emph{Structure of the string link
    concordance group and Hirzebruch-type invariants}, Indiana
  University Mathematics Journal 58 (2009), 891--928.

\bibitem[CK08]{CK08} J. C. Cha and T. Kim, {\em Covering link calculus
    and iterated Bing doubles}, Geometry and Topology 12 (2008),
  2173--2201.

\bibitem[CKo99]{CKo99} J. C. Cha and K. H. Ko, {\em Signature
    invariants of links from irregular covers and non-abelian covers},
  Math. Proc. Cambridge Philos. Soc. 127 (1999), no. 1, 67--81.

\bibitem[CKo06]{CKo06} J.~C.~Cha and K.~H.~Ko, {\it Signature
    invariants of covering links}, Trans.  Amer. Math. Soc. 358
  (2006), 3399--3412.

\bibitem[CLR08]{CLR08} J. C. Cha, C. Livingston and D. Ruberman, {\em
    Algebraic and Heegaard-Floer invariants of knots with slice Bing
    doubles}, Math. Proc. Cam. Phil. Soc. 144 (2008), 403--410.

\bibitem[Chp74]{Chp74} T. A. Chapman, {\em Topological invariance of
    Whitehead torsion}, Amer. J. Math. 96 (1974), no. 3, 488--497.

\bibitem[Ci06]{Ci06} D. Cimasoni, {\em Slicing Bing doubles},
  Algebr. Geom. Topol. 6 (2006), 2395--2415.

\bibitem[CHL08]{CHL08} T. Cochran, S. Harvey and C. Leidy, {\em Link
    concordance and generalized doubling operators},
  Algebr. Geom. Topol. 8 (2008), 1593--1646.

\bibitem[CO90]{CO90} T. Cochran and K. Orr, {\em Not all links are
    concordant to boundary links}, Bull. Amer. Math. Soc.  (N.S.) 23
  (1990), no. 1 (1990), 99--106.

\bibitem[CO93]{CO93} T. Cochran and K. Orr, {\em Not all links are
    concordant to boundary links}, Ann. of Math. (2) 138 (1993),
  no. 3, 519--554.

\bibitem[COT03]{COT03} T. Cochran, K. Orr and P. Teichner, {\em Knot
    concordance, Whitney towers and $L\sp 2$-signatures}, Ann. of
  Math. (2) 157 (2003), no. 2, 433--519.

\bibitem[Du86]{Du86} J. Duval, {\em Forme de Blanchfield et cobordisme
    d'entrelacs bords}, Comment. Math. Helv.  61 (1986), no. 4,
  617--635.

\bibitem[FM66]{FM66} R. H. Fox and J. W. Milnor, {\em Singularities of
    2--spheres in 4--space and cobordism of knots}, Osaka J. Math 3
  (1966), 257--267.

\bibitem[Fr05]{Fr05} S. Friedl, {\em Link concordance, boundary link
    concordance and eta-invariants}, Math. Proc.  Cambridge
  Philos. Soc. 138 (2005), no. 3, 437--460.

\bibitem[FK06]{FK06} S. Friedl and T. Kim, \emph{Thurston norm,
    fibered manifolds and twisted Alexander polynomials}, Topology, 45
  (2006), 929--953.

\bibitem[FK08]{FK08} S. Friedl and T. Kim, {\em Twisted Alexander
    norms give lower bounds on the Thurston norm},
  Trans. Amer. Math. Soc. 360 (2008), 4597--4618.

\bibitem[FV09]{FV09} S. Friedl and S. Vidussi, {\em A survey of
    twisted Alexander polynomials}, Preprint (2009).

\bibitem[GL92]{GL92} P. Gilmer and C. Livingston, {\em The
    Casson-Gordon invariant and link concordance}, Topology 31 (1992),
  475--492.

\bibitem[Go78]{Go78}
 C. McA. Gordon, {\em Some aspects of classical knot theory}, Knot theory (Proc.
Sem., Plans-sur-Bex, 1977), Lecture Notes in Math. 685 (1978), 1--65.

\bibitem[Ha08]{Ha08} S. Harvey, {\em Homology cobordism invariants and
    the Cochran-Orr-Teichner filtration of the link concordance
    group}, Geom. Topol. 12 (2008), 387--430.

\bibitem[HKL08]{HKL08} C. Herald, P. Kirk and C. Livingston, {\em
    Metabelian representations, twisted Alexander polynomials, knot
    slicing, and mutation}, Preprint (2008), to appear in Math. Z.

\bibitem[Ka77]{Ka77} A. Kawauchi, {\em On quadratic forms of
    3-manifolds}, Invent. Math. 43 (1977), 177--198.

\bibitem[Ka78]{Ka78} A. Kawauchi, {\em On the Alexander polynomials of
    cobordant links}, Osaka J. Math. 15 (1978), no. 1, 151--159.

\bibitem[KL99a]{KL99a} P. Kirk and C. Livingston, {\em Twisted
    Alexander invariants, Reidemeister torsion and Casson--Gordon
    invariants}, Topology 38 (1999), no. 3, 635--661.

\bibitem[KL99b]{KL99b} P. Kirk and C. Livingston, {\em Twisted knot
    polynomials: inversion, mutation and concordan}, Topology 38
  (1999), no. 3, 663--671.

\bibitem[Ko85]{Ko85} K. H. Ko, {\em Algebraic classification of simple
    links of odd dimension $\geq$ 3}, Unpublished (1985).

\bibitem[Ko87]{Ko87} K. H. Ko, {\em Seifert matrices and boundary link
    cobordism}, Trans.\ Amer. Math. Soc. 299 (1987), 657--681.

\bibitem[Le94]{Le94} J. Levine, {\em Links invariants via the eta
    invariant}, Comm. Math. Helv.  69 (1994), 82--119.
\bibitem[Le07]{Le07} J. Levine, {\em Concordance of boundary links},
  Journal of Knot Theory and its Ramifications 16 (2007), no. 9,
  1111--1120.

\bibitem[Lia77]{Lia77} C. Liang, {\em An algebraic classification of
    some links of codimension two}, Proc. Amer. Math.
  Soc. 67 (1977), 147--151.

\bibitem[Liv09]{Liv09} C. Livingston, {\em The concordance genus of a
    knot, II}, Algebraic \& Geometric Topology 9 (2009), 167--185.

\bibitem[Mi57]{Mi57} J. Milnor, {\em Isotopy of links}, Algebraic
  geometry and topology, A symposium in honor of S.  Lefschetz,
  280--306. Princeton University Press, Princeton, N. J. (1957).

\bibitem[Mi62]{Mi62} J. Milnor, \emph{A duality theorem for
    Reidemeister torsion}, Ann. of Math. (2) 76 (1962), 137--147.

\bibitem[Mi66]{Mi66} J. Milnor, {\em Whitehead torsion},
  Bull. Amer. Math. Soc.  72 (1966), 358--426.

\bibitem[Mi87]{Mi87} W. Mio, {\em On boundary-link cobordism},
  Math. Proc. Camb. Phil. Soc. 101 (1987) 259--266.

\bibitem[Mu67]{Mu67} K. Murasugi, {\em On a certain numerical
    invariant of link types}, Trans. Amer. Math. Soc. 117 (1967),
  387--422

\bibitem[Na78]{Na78} Y. Nakagawa, {\em On the Alexander polynomials of
    slice links}, Osaka J. Math. 15 (1978), no. 1, 161--182.

\bibitem[Sh03]{Sh03} D. Sheiham, {\em Invariants of boundary link
    cobordism}, Mem. Amer. Math. Soc. 165, no. 784 (2003).

\bibitem[Sh06]{Sh06} D. Sheiham, {\em Invariants of boundary link
    cobordism. The Blanchfield-Duval Form}, Noncommutative
  Localization in Algebra and Topology, LMS Lecture Notes Series 330
  (2006), Cambridge University Press, 143--219.

\bibitem[Sm89]{Sm89} L. Smolinsky,
{\em Invariants of link cobordism},
Proceedings of the 1987 Georgia Topology Conference (Athens, GA, 1987).
Topology Appl. 32 (1989), no. 2, 161--168.
 
\bibitem[Sta65]{St65} J. Stallings, {\em Homology and central series
    of groups}, Journal of Algebra 2 (1965), 170--181.

\bibitem[Str74]{Str74} R.~Strebel, \emph{Homological methods applied
    to the derived series of groups}, Comment. Math. Helv. 49 (1974),
  302--332.

\bibitem[Ta02]{Ta02} A. Tamulis, {\em Knots of Ten or Fewer Crossings
    of Algebraic Order Two}, J. Knot Theory Ramifications 11 (2002),
  no. 2, 211--222.

\bibitem[Tu86]{Tu86} V. Turaev, {\em Reidemeister torsion in knot
    theory}, Russian Math. Surveys 41 (1986), 119--182.

\bibitem[Tu01]{Tu01} V. Turaev, {\em Introduction to Combinatorial
    Torsions}, Lectures in Mathematics, ETH Z\"urich, 2001.

\bibitem[Tu02]{Tu02} V. Turaev, {\em Torsions of 3--manifolds},
  Progress in Mathematics 208. Birkhauser Verlag, Basel,
  2002.

\bibitem[vCo09]{vCo09} C. Van Cott, {\em An obstruction to slicing
    iterated Bing doubles}, Preprint (2009).

\bibitem[Vo90]{Vo90} P. Vogel, {\em Representations of links groups},
  KNOTS 90: Proceedings of Osaka knot theory conference (1990), W. de
  Gruyter, Berlin, 381--388.

\bibitem[Wa97]{Wa97} L. Washington, {\em Introduction to cyclotomic
    fields}, Second edition. Graduate Texts in Mathematics,
  83. Springer-Verlag, New York (1997).

\bibitem[We03]{We03} S. Weintraub, {\em Representation Theory of
    Finite Groups: Algebra and Arithmetic}, AMS Graduate Studies in
  Mathematics (2003).

\end{thebibliography}
\end{document}